\documentclass[11pt]{amsart}
\usepackage{palatino, mathpazo}
\usepackage[mathscr]{eucal}
\usepackage{amssymb}
\usepackage[all]{xy}
\usepackage{graphicx}
\usepackage{texdraw}
\usepackage{color}

\newtheorem{theorem}{Theorem}[section]
\newtheorem{lemma}[theorem]{Lemma}
\newtheorem{proposition}[theorem]{Proposition}
\newtheorem{corollary}[theorem]{Corollary}
\newtheorem{conjecture}[theorem]{Conjecture}
\newtheorem{definition}[theorem]{Definition}
\theoremstyle{remark}
\newtheorem{remark}[theorem]{Remark}
\newtheorem{example}[theorem]{Example}
\newtheorem{question}[theorem]{Question}
\numberwithin{equation}{section}

\newcommand{\p}{\partial}
\newcommand{\z}{{\bf z}}
\newcommand{\bz}{{\bf z}}
\newcommand{\T}{\vartheta}
\newcommand{\f}{{\bf f}}
\newcommand{\w}{{\bf w}}

\begin{document}

\title{Algebraic methods in periodic singular Liouville equations}
\author{Chin-Lung Wang}
\address{Department of Mathematics, National Taiwan University, Taipei}
\email{dragon@math.ntu.edu.tw}

\maketitle

\begin{abstract}
We explain how algebraic geometry comes into play in the study of non-linear mean field (singular Liouville) equations 
$$
\triangle u + e^u = 4\pi \sum_{i = 1}^N \ell_i \delta_{p_i}
$$ 
on a flat torus $E = \Bbb C/\Lambda$, where $N, \ell_1, \ldots, \ell_N \in \Bbb N$, $p_i \in E$ are distinct points, and $\delta_{p_i}$ is the Dirac measure at $p_i$. 

The case with one singular source ($N = 1$) had been studied extensively in recent years. We start with a survey of this case with emphasizes on the constructions of Lam\'e curves $\overline X_n$ and pre-modular forms $Z_n(\sigma, \tau)$ which encodes the structure of solutions of the PDE. 

We then discuss extensions to the case of general $N$. The basic tool is the monodromy theory for generalized Lam\'e equations. Two aspects are discussed: (1) For $\ell := \sum_{i = 1}^N \ell_i$ being odd, an exact counting formula of \emph{algebraic degree} is proved. (2) For $\ell$ being even, the existence of generalized Lam\'e curves parametrizing logarithmic-free solutions is proposed.
\end{abstract}

\tableofcontents

\setcounter{section}{-1}

\section{Introduction} \label{mlame}
\setcounter{equation}{0}

\subsection{Mean field equations and generalized Lam\'e equations}

We study mean field (singular Liouville) equations with multiple singular sources on a flat torus $E = \Bbb C/\Lambda$, $\Lambda = \Bbb Z \omega_1 + \Bbb Z \omega_2$:
\begin{equation} \label{MFE-ln}
\triangle u + e^u = 4\pi \sum_{i = 1}^{N} \ell_i\delta_{p_i} \quad \text{on $E$},
\end{equation}
with local singular strength $\ell_i \in \Bbb N$ at $p_i \in E$, $1\le i \le N$, and $p_i \ne p_j$ if $i \ne j$. Denote by $L := \sum_{i = 1}^N \ell_i p_i$ the divisor of singular source, and $\ell := \deg L = \sum_{i = 1}^N \ell_i$ the total singular strength. 

We start by recalling some basic results on equation \eqref{MFE-ln} (cf.~\cite[\S 1.1]{CLW}). A (local) developing map $f$ of a solution $u$ to equation \eqref{MFE-ln} is a local meromorphic function away from $L$ which is related to $u$ by 
\begin{equation} \label{e:dev}
u = 8\pi + \log \frac{|f'|^2}{(1 + |f|^2)^2}.
\end{equation}
A classical theorem of Liouville says that any solution $u$ is locally represented by \eqref{e:dev} for some $f$. Moreover, the choice of $f$ is unique up to the obvious ${\rm PSU}(2)$ (Mobius) action
$$
f \mapsto Mf := \frac{pf - \bar q}{qf + \bar p}, \qquad M = \begin{pmatrix} p & -\bar q \\ q & \bar p \end{pmatrix} \in {\rm SU}(2)/{\pm 1}.
$$

The integral condition $\ell_i \in \Bbb N$ for all $i$ implies that $f$ extends to a global meromorphic function on $\Bbb C$. In particular 
$$
f(z + \omega_1) = S_1 f(z), \qquad f(z + \omega_2) = S_2 f(z),
$$ 
for some $S_i \in {\rm PSU}(2)$. Then $f$ is called a type I developing map if
\begin{equation} \label{type-I}
\begin{split}
f(z + \omega_1) &= -f(z), \\
f(z + \omega_2) &= \frac{1}{f(z)},
\end{split}
\end{equation}
and a type II developing map if there are $\theta_i, \theta_2 \in \Bbb R$ such that
\begin{equation} \label{type-II}
\begin{split}
f(z + \omega_1) &= e^{i\theta_1} f(z), \\ 
f(z + \omega_2) &= e^{i\theta_1} f(z).
\end{split}
\end{equation}
From $S_1 S_2 = S_2 S_1$ in ${\rm PSU}(2)$, by choosing $f$ suitably, one sees easily that every solution $u$ belongs to exactly one of these two types. We then call such an $f$ a normalized developing map of $u$. \smallskip

This integrability structure of solutions is best explained by its associated ODE of equation \eqref{MFE-ln}, which is a \emph{generalized Lam\'e equation}: 
\begin{equation} \label{e:gLame}
w'' - \Big(\sum_{i = 1}^N \eta_i(\eta_i + 1) \wp(z - p_i) + \sum_{i = 1}^N A_i \zeta(z - p_i) + B\Big) w = 0,
\end{equation}
where the potential term is the Schwarzian derivative $S(f)$ (c.f.~\eqref{g-mS-der}) with 
$$
\eta_i = \tfrac{1}{2}\ell_i \in \tfrac{1}{2} \Bbb N, \qquad \sum_{i = 1}^N A_i = 0, \quad A_i, B \in \Bbb C.
$$ 

The ${\rm PSU}(2)$ action corresponds to the requirement that equation \eqref{e:gLame} has \emph{projective unitary monodromy group}. We will show that the type I case corresponds to the case with $\ell$ being odd and equation \eqref{e:gLame} has finite monodromy $M$, or equivalently the projective monodromy group is the Klein-four group $PM \cong K_4$. The type II case corresponds to the case with $\ell$ being even and equation \eqref{e:gLame} has unitary monodromy. The ratio $f = w_1/w_2$ of its two independent solutions then gives a developing map. \smallskip
\subsection{Known results for $N = 1$}

Equation \eqref{MFE-ln} for $N = 1$ ($\ell = \ell_1$) had received significant progresses in the last decade through a joint effort on non-linear analysis and its corresponding classical Lam\'e equation
\begin{equation} \label{e:Lame}
w'' = (\eta (\eta + 1) \wp(z) + B) w
\end{equation}
with $\eta = \ell/2$ \cite{LW, CLW, LW-II}. The Lam\'e equation \eqref{e:Lame} (for all $\eta \in \Bbb R_{\ge 0}$) had been studied extensively since the 19th century, though a full account on its monodromy theory is still awaiting. As a byproduct, advances on it had also been made through such interactions. 

Let $\wp'(z)^2 = 4 \wp(z)^3 - g_2 \wp(z) - g_3$ be the Weierstrass equation of the torus $E = \Bbb C/\Lambda$, with $g_2 = 60 E_4(\Lambda)$ and $g_3 = 140 E_6(\Lambda)$ be the standard modular functions (Eisenstein series). 

For $\ell = 2n + 1$ being odd, $n \in \Bbb Z_{\ge 0}$, the classical result due to Brioschi, Halphen and Crawford says that there is a universal (weighted homogeneous) polynomial $p_n(B; g_2, g_3)$ of degree $n + 1$ whose roots $B$'s correspond to those equations in \eqref{e:Lame} which admit only log-free solutions (cf.~the proof in \cite[Theorem 3.1]{CLW} and the references in \cite[\S 23.7]{Whittaker}). Hence they have finite monodromy $PM \cong K_4$. A new proof to this result, as well as the explicit construction of the developing maps $f$ were carried out in \cite[\S 3.4]{CLW}. It was also announced in \cite[Remark 3.2.1]{CLW} that the idea of this new proof could be generalized to study the case with multiple singularities (cited as reference [11] there). Due to the increasing diversities of techniques, the writing of that paper was postponed. Nevertheless the promised generalization is now presented in this paper (see \S \ref{ss:N} below).   

For $\ell = 2n$ being even, on the contrary, there is a polynomial $\ell_n(B; g_2, g_3)$ of degree $2n + 1$ such that \eqref{e:Lame} admits only log-free solutions if and only if $\ell_n(B; g_2, g_3) \ne 0$. An eigenvalue problem $Lw := w'' - I w = B w$ of an ODE with potential $I$ satisfying such a property is known as a \emph{finite gap potential}. The integral Lam\'e equations with $I = n(n + 1) \wp(z)$ provides the first non-trivial such examples. Since there is at least one log-free solution, the set of log-free solutions are thus parametrized by the \emph{Lam\'e curve} $Y_n: C^2 = \ell_n(B; g_2, g_3)$ which is an hyperelliptic curve of arithmetic genus $g = n$ under the projection $\pi: Y_n \to \Bbb C$, $(C, B) \mapsto B$. 

The finer structure of $Y_n$ had been analyzed in details in \cite{CLW} through both analytic and algebraic methods. It allows us to go on to characterize the loci where \eqref{e:Lame} has unitary monodromy. It starts with the \emph{Hermite--Halphen ansatz} \cite[p.495--498]{Halphen}: for $a = (a_1. \ldots, a_n) \in \Bbb C^n$,
\begin{equation}
w_a := e^{z \sum \zeta(a_i)} \prod_{i = 1}^n \frac{\sigma(z - a_i)}{\sigma(z)}.
\end{equation}
Then $w_a$, $w_{-a}$ provide independent solutions to \eqref{e:Lame} if and only if $[a] := a \pmod{\Lambda} \in {\rm Sym}^n E$ satisfies the following system of algebraic equations
\begin{equation} \label{e:Xn}
\sum_{i = 1}^n y_i x_i^k = 0, \qquad 0 \le k \le n - 2,
\end{equation}
where $(x_i, y_i) := (\wp(a_i), \wp'(a_i))$. This defines the \emph{Liouville curve} $X_n \subset {\rm Sym}^n E$ which can be identified with the unramified loci of $\pi: Y_n \to \Bbb C$. One crucial result proved in \cite[\S 7.6]{CLW} says that $\overline X_n \subset {\rm Sym}^n E$ coincides with the projective hyperelliptic model of $Y_n$. In particular, $0^n \in \overline X_n$ is a non-singular point. The unitary constraint, which is the type II constraint \eqref{type-II} in this case, can be written as a Green function equation on $X_n$:
\begin{equation} \label{e:green}
\sum_{i = 1}^n \nabla G(a_i) = 0, \qquad a \in X_n,
\end{equation}
where $G$ is the Green function on $E$ centered at $0 \in E$.
 
Then in \cite[Theorem 3.2]{LW-II}, it was shown that the addition map $\sigma_n: \overline X_n \to E$ defined by $\sigma_n(a) = \sum_{i = 1}^n a_i$ is a branched cover of degree $\tfrac{1}{2} n(n + 1)$. Based on $\sigma_n$ and the Hecke function (see \S \ref{modular}), a pre-modular form $Z_n(\sigma, \tau)$ was constructed which has the property that every non-trivial zero $(\sigma, \tau)$ (i.e~$\sigma \not\in E_\tau[2]$) corresponds to a unique solution $a$ to equations \eqref{e:Xn} and \eqref{e:green} via $\sigma_n(a) = \sigma$. In particular, $f := w_a/w_{-a}$ is a normalized developing map of a type II solution $u$ to the mean field equation \eqref{MFE-ln} with $N = 1$.

I will review this procedure in more details in \S \ref{s:HE}, with emphasizes on the problem on explicit constructions of $Z_n(\sigma, \tau)$.

\subsection{Results and proposals for general $N$} \label{ss:N}

The second aim of this paper is to propose extensions of some results in $N = 1$ to the case $N \ge 2$.

\begin{theorem} \label{t:main-1}
When $\ell = \deg L$ is odd, there is only a finite number of solutions to equation \eqref{MFE-ln}. All the solutions are of type I and algebraically integrable. Moreover, its algebraic degree $d_L$ is given by the formula:
\begin{equation} \label{degree}
d_{L} = \tfrac{1}{2} \prod_{i = 1}^N (\ell_i + 1) \in \Bbb N.
\end{equation}
(The expression is an integer since some $\ell_i$ is odd). 
\end{theorem}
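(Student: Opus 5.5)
Via the Liouville correspondence, solutions of \eqref{MFE-ln} match normalized developing maps $f = w_1/w_2$, where $w_1,w_2$ solve a generalized Lam\'e equation \eqref{e:gLame} with projectively unitary monodromy. The plan is to show that for $\ell$ odd this forces type I, hence finite Klein-four projective monodromy. Type I then reduces to an algebraic problem: the equation must be free of logarithms at every $p_i$. This gives a finite polynomial system in the accessory parameters $(A_1,\dots,A_N,B)$ with $\sum A_i=0$, and the algebraic degree $d_L$ is the number of its solutions counted with multiplicity, which I expect to be simple. The proof has three steps.

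\emph{Step 1: type I for $\ell$ odd.} Near $p_i$ the local exponents are $-\eta_i$ and $\eta_i+1$, differing by $\ell_i+1$. If $\ell_i$ is odd, the local monodromy of a log-free solution is $-I$; if $\ell_i$ is even, it is $I$. Hence the product of the local monodromies around all $p_i$ is $(-1)^{\ell}I = -I$ for $\ell$ odd, and it equals the commutator $S_1S_2S_1^{-1}S_2^{-1}$ of the period monodromies in ${\rm SL}(2)$. For type II, $S_1,S_2$ are simultaneously diagonal and commute in ${\rm SL}(2)$, which contradicts $-I$. So every solution is type I, and then $S_1=\mathrm{diag}(i,-i)$ and $S_2=\begin{pmatrix}0&i\\ i&0\end{pmatrix}$, generating the quaternion group. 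Conversely, if all singular points are apparent (log-free), the commutator is automatically $-I$. Commuting projective images plus anticommutation then force the projective monodromy to be $K_4$ up to conjugation, so it is unitarizable, and every log-free equation yields exactly one solution $u$ (the ${\rm PSU}(2)$ normalization fixes $f$ up to the finite stabilizer). Finite monodromy also makes $f$ algebraic over the field of elliptic functions: $f^2$ is invariant under $\omega_1$ and changes to $f^{-2}$ under $\omega_2$, so $f^2+f^{-2}$ is elliptic.

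\emph{Step 2: the log-free conditions.} At $p_i$, the Frobenius recursion for the exponent $-\eta_i$ produces, at step $\ell_i+1$, a polynomial condition $P_i(A,B)=0$ of degree $\ell_i+1$ in $(A_i,B)$, with the other $A_j$ entering through the Laurent expansion. Since $\sum A_i = 0$, there are $N$ unknowns $A_1,\dots,A_{N-1},B$ and $N$ equations. I would assign weights $\deg A_i=1$ and $\deg B=2$, and check that each $P_i$ is weighted-homogeneous modulo lower order terms, with leading part the one obtained by freezing the other singularities. Bezout then bounds the number of solutions by a product of degrees. The expected count is $\prod(\ell_i+1)/2$, the factor $1/2$ coming from an involution ($z\mapsto -z$ combined with a sign on the $A_i$) or from a factorization of the leading forms. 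For $N=1$ this recovers $n+1=\deg p_n$.

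\emph{Step 3: main obstacle.} The difficulty is showing the system has no solutions at infinity and that all finite solutions are simple, so that the count is exact. For the first, I would degenerate: let the $p_i$ coalesce, or pass to the limit where the leading parts decouple, and show that the leading system has only the trivial common zero, equivalently that log-free equations cannot escape to infinity. For the second, I would compute the Jacobian, or better, use that type I solutions are nondegenerate, via the developing map and the linearized PDE, and then invoke invariance of the count under deformation of $(p_i,\tau)$. If this fails, I would instead count $d_L$ as the degree of a map between a space of rational-type developing maps and the parameter space, and verify the halved product against the $N=1$ case and an explicit computation for $N=2$.
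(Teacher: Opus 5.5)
There is a genuine gap, and it sits exactly at what you call the main obstacle. You never determine the leading forms of the log-free conditions, and without them neither finiteness nor the factor $\frac12$ follows.

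The missing ingredient is Lemma~\ref{top-term-lem}. With weights $\deg A_j=1$ and $\deg B=2$, the top part of the $i$-th condition involves only $A_i$ and $B$, and it equals
\[
q_{\ell_i}=\frac{(-1)^{\ell_i}}{(\ell_i!)^2}\prod_{j=0}^{\ell_i}\bigl(A_i-(\ell_i-2j)B^{1/2}\bigr).
\]
The paper proves this by replacing $\ell_i$ with a complex parameter $s$ and linearizing the Riccati recursion into a confluent hypergeometric equation. It then reads $q_{\ell_i}$ off as a residue at $s=\ell_i$ of a coefficient of $g'/g$, where $g$ is Kummer's solution.

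Only with this formula does the parity of $\ell$ enter. A nonzero common zero of the leading system needs $B\neq 0$ and $A_i=\mu_i B^{1/2}$ with $\mu_i\equiv\ell_i \pmod 2$. Then $\sum_i A_i=0$ would force $\sum_i\mu_i=0$, which is impossible because $\sum_i\mu_i\equiv\ell$ is odd. Your Step~3 never uses that $\ell$ is odd, yet for $\ell$ even the conclusion really fails: the log-free locus then contains curves, see \S\ref{IFT}. So coalescing the $p_i$ or ``decoupling'' cannot replace this computation.

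Of your two guesses for the $\frac12$, the involution is wrong. The map $z\mapsto -z$ sends $\{p_i\}$ to $\{-p_i\}$, so it does not act on the solutions for a fixed divisor. Also, in the ordinary projective closure there is always a solution at infinity, $Q=[0:\cdots:0:1]$, because the ordinary top part of the $i$-th equation is a nonzero multiple of $A_i^{\ell_i+1}$.

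The paper takes the B\'ezout number $\prod_i(\ell_i+1)$ and subtracts the multiplicity of the isolated point $Q$. The linear factorization of $q_{\ell_i}$ in $x_i$ and $x_0^{1/2}$ shows that this multiplicity is $\frac12\prod_i(\ell_i+1)$. So your ``factorization of the leading forms'' guess is the right one, but only once $q_{\ell_i}$ is known explicitly.

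Equivalently, your weighted set-up would then finish at once. Since the leading system has only the trivial zero, weighted B\'ezout in $A_1,\dots,A_{N-1}$ (weight $1$) and $B$ (weight $2$) gives $\prod_i(\ell_i+1)/2$; the $\frac12$ is simply the weight of $B$.

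Proving that the finite solutions are simple is unnecessary, since $d_L$ counts with multiplicity and the paper claims nothing about simplicity. Nondegeneracy of the linearized PDE is also not an available tool here. Your Step~1 matches Proposition~\ref{PMK4} and Corollary~\ref{c:type-I}.
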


That is, all the solutions can be effectively constructed by way of solving certain explicit polynomial equations. This refines a previous result in \cite{CL2} which says that the topological Leray--Schauder degree of eqution \eqref{MFE-ln} is defined and given by \eqref{degree}. The proof of Theorem \ref{t:main-1} is given in Theorem \ref{t:degree} and Corollary \ref{c:type-I}, whose basic idea is explained below.

For all $\ell \in \Bbb N$, we first construct an explicit polynomial system on $A_i$ and $B$ whose zeros correspond to equations \eqref{e:gLame} with only log-free solutions. 

When $\ell$ is odd, the polynomial system has isolated zeros $(\{A_i\}, B)$'s and each of them satisfies $PM \cong K_4$ automatically. Thus the correspondence between equations \eqref{MFE-ln} and \eqref{e:gLame} is exact and the problem is completely reduced to the study of the (affine) polynomial system.

Counting roots of affine polynomial equations is in general not easy due to lacking of an \emph{affine Bezout theorem}. We projectivize the system and subtract the infinity contribution from the Bezout degree. It turns out that the infinity point $Q$ is isolated and we need only compute its multiplicity. Here a trick using confluent hypergeometric equation and its Kummer solution is employed to determine the top degree terms $q_{\ell_i}$ of the polynomial equations in $(\{A_i\}, B)$ (see Lemma \ref{top-term-lem}):
\begin{equation} \label{e:top}
q_{\ell_i} = \frac{(-1)^\ell_i}{(\ell_i !)^2} \prod_{j = 0}^{\ell_i} (A_i - (\ell_i - 2j) B^{1/2}), \qquad i = 1, \ldots, N.
\end{equation}
This leads easily to a proof of Theorem \ref{t:main-1}. 
\smallskip

When $\ell$ is even, the situation is more involved, and we restrict ourselves to the primitive case $\ell_i = 1$ for all $1 \le i \le N = 2n = \ell$. Based on \eqref{e:top}, the zero set $V$ of the polynomial system on $(\{A_i\}, B)$ is shown to consist of a finite number of points and complex algebraic curves. 

At this point, I conjecture that, even for the non-primitive case, $V$ does contain non-trivial curve component $V_0$ (cf.~Conjecture \ref{c:conj}). The conjecture is clear if the set of singular sources $\{p_i\}$ is symmetric with respect to a center point $o \in E$:
$$
\{p_i - o\} = \{-(p_i - o)\}.
$$ 
In such a case each irreducible curve component of $V$ consists of symmetric log-free parameter $(\{A_i\}, B)$ in the sense that 
$$
A_{n + i} = -A_i, \qquad i = 1, \ldots, n
$$ 
such that the corresponding generalized Lam\'e equation descents to an ODE on $\Bbb P^1$ under the elliptic projection map $E \to \Bbb P^1$ (cf.~Example \ref{l=2n}).

\begin{proposition} \label{p:sym}
In the primitive case, the conjecture on the existence of non-trivial curve component holds for the cases $\ell = 2, 4$, i.e.~$n = 1, 2$.  
\end{proposition}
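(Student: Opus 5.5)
We need a plan for showing that in the primitive case ($\ell_i=1$) with $N=2$ and $N=4$, the log-free locus $V\subset\Bbb C^{N}$ of parameters $(\{A_i\},B)$ (with $\sum A_i=0$) contains a curve component other than the trivial finite points, for arbitrary distinct singular points $p_i$.

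For $N=2$ the symmetry hypothesis always holds, so the earlier symmetric argument applies directly. Any two points $p_1,p_2$ are symmetric about the midpoint $o=(p_1+p_2)/2$. After translating by $o$ we may assume $p_2=-p_1$. By Example \ref{l=2n}, the symmetric parameters $A_2=-A_1$ give equations invariant under $z\mapsto -z$, and these descend to an ODE on $\Bbb P^1$.

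\begin{itemize}
\item The log-free condition at $p_1$ is one polynomial equation in $(A_1,B)$. For $\ell_1=1$ it is the linear-in-data condition obtained from the Frobenius expansion at exponents $-\tfrac12\cdot$(shifted) $\{0,2\}$, i.e.\ the coefficient of the resonant term vanishes.
\item By symmetry, the condition at $p_2$ is equivalent to the one at $p_1$.
\item Hence $V$ contains the curve $\{A_2=-A_1,\ q(A_1,B)=0\}$ in the plane $\sum A_i=0$, which is $V$ itself.
\end{itemize}

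So for $n=1$ the content is checking that this single equation is non-constant. That follows from the top term \eqref{e:top}, namely $q_{1}=-(A_1-B^{1/2})(A_1+B^{1/2})=-(A_1^2-B)$. In fact $V$ is then an entire curve, of dimension $2-1=1$ in $(A_1,B)$.

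For $N=4$ I would argue by dimension counting rather than symmetry, since four generic points are not symmetric.

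\begin{itemize}
\item The parameter space $\{(A_1,\dots,A_4,B):\sum A_i=0\}$ has dimension $4$.
\item The log-free conditions are four polynomial equations $q_1,\dots,q_4$, one at each $p_i$. Each has degree $2$ in the weighting where $A_i$ and $B^{1/2}$ have weight $1$, with top terms $-(A_i^2-B)$ by \eqref{e:top}.
\item These four conditions are dependent. I would prove the dependence via the residue theorem, using that the product of two solutions ("squared eigenfunction") is an elliptic function. More concretely, the log-free condition at $p_i$ is equivalent to the existence of an elliptic solution $\Phi$ of the third-order symmetric-square equation with prescribed pole at $p_i$. The sum of residues of $\Phi$ then imposes one linear relation among the local obstructions.
\end{itemize}

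This gives at most three independent conditions in a four-dimensional space, so every component of $V$ has dimension $\ge 1$. The remaining task is to show $V\neq\emptyset$ and that some component is not among the isolated points.

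\begin{itemize}
\item \emph{Nonemptiness:} homogenize in weighted projective space. By \eqref{e:top}, the points at infinity of the system satisfy $A_i=\pm B^{1/2}$ with $\sum A_i=0$. This gives finitely many directions, e.g.\ $(1,1,-1,-1)B^{1/2}$ up to permutation.
\item \emph{Existence of an affine curve:} intersecting the projective closure with a generic hyperplane $B=c$ yields finitely many points. It remains to show not all of them lie at infinity.
\end{itemize}

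I expect this last step to be the main obstacle: proving the projective count exceeds the multiplicity at infinity, using the Kummer-solution local analysis that established \eqref{e:top}. If that fails, my fallback is a continuity/deformation argument. Start from a symmetric configuration $\{p_i\}=\{\pm a,\pm b\}$, where the symmetric curve exists by Example \ref{l=2n}. Then use that the dimension of components is $\ge1$ everywhere (upper semicontinuity) to propagate a curve component to nearby, and hence by connectedness all, configurations of four distinct points.
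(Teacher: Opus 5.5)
Your treatment of $n=1$ is fine and matches the paper: $\sum A_i=0$ forces $A_2=-A_1$, and since $\zeta_{21}=-\zeta_{12}$ and $\wp_{21}=\wp_{12}$, the two quadratic conditions coincide, so $V$ is a plane conic. The $n=2$ part, however, has a genuine gap at its central step, namely the claimed dependence of the four log-free conditions. This claim is not merely unproved; it is false in the form you need.

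To see this, work in the chart $x_i=A_i/B$ near $Q$ as in \eqref{infty-system}. Put $x_0=t^2$, $x_i=ty_i$ for $i=1,2,3$, and $x_4=-(x_1+x_2+x_3)$. Then $f_1=f_2=f_3=0$ become $y_i^2=1+t\sum_j\zeta_{ij}y_j+\tfrac34\wp_i t^2$. By the implicit function theorem this system has an analytic solution with $y(0)=(1,1,1)$. Along this branch $f_0=f_1=f_2=f_3=0$, but $f_4=t^2(y_4^2-1)+O(t^3)=8t^2+O(t^3)\neq 0$. So for $|B|$ large there are parameters that are log-free at $p_1,p_2,p_3$ but not at $p_4$.

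Consequently no relation lets one condition be deduced from the others, and you get no a priori bound $\dim\ge 1$ on the components of $V$. (A syzygy $\sum c_iF_i=0$ with polynomial $c_i$ always exists trivially, e.g.\ $F_2F_1-F_1F_2=0$, and gives nothing.) The residue mechanism is also circular: an elliptic solution $\Phi$ of the symmetric square exists only once log-freeness at every $p_i$ and commuting monodromy are already known, so its residues cannot constrain arbitrary $(\{A_i\},B)$. Note too that nothing in your sketch uses $N=4$. If it worked, it would settle Conjecture \ref{c:conj} for every even $N$ in the primitive case.

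The remaining steps do not close the gap. Showing that $V$ actually has a curve component is exactly the content of the proposition, and you leave that step open. Your fallback also runs semicontinuity the wrong way. Take the proper family of projective zero sets (whose fibre over a configuration is $V\cup\{Q\}$). Upper semicontinuity of fibre dimension makes ``$V$ has a curve component'' a closed condition on the configuration $\{p_i\}$. So it passes to specializations, not from the symmetric locus (a proper closed subset, e.g.\ $p_1+p_3=p_2+p_4$) to nearby generic configurations. The symmetric curves of Example \ref{l=2n} form a family over that locus with no reason to extend off it.

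The missing idea is the paper's local intersection count at $Q$. Let $C$ be the curve cut out by the four quadrics alone, without the global constraint. Its closure $\bar C$ is a complete intersection of degree $16$ meeting infinity only at $Q$, where it has $8$ smooth branches $x_0=t^2$, $x_i=\pm t+\cdots$. Five of these branches meet $H=\{\sum x_i=0\}$ transversally. For the remaining $C^4_2/2=3$ branches, whose tangent lines lie in $H$:
\begin{itemize}
\item the $t^2$-coefficient of $\sum x_i(t)$ vanishes by skew-symmetry of $\zeta_{ij}$;
\item the $t^3$-coefficient vanishes by the elliptic identity \eqref{e:l=4}.
\end{itemize}
Hence each of these three branches meets $H$ with multiplicity $\ge 4$. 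This gives $(\bar C\cdot H)_Q\ge 5+12=17>16=\deg\bar C$, so by B\'ezout some component of $C$ lies in $H$, and that component is a curve in $V$. This is precisely where $n=2$ enters: the analogous count requires $3C^{2n}_n>2^{2n}$, which fails for $n\ge 3$.
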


The case $n = 1$ is trivially true. The case $n = 2$ is proved in Example \ref{e:n=2} by observing a factorization formula of elliptic functions
$$
\wp_{24} - \wp_{13} = (\zeta_{24} - \zeta_{13} + \zeta_{12} - \zeta_{34}) (\zeta_{24} + \zeta_{13} - \zeta_{14} - \zeta_{23}),
$$
where $\wp_{ij} := \wp(p_i - p_j)$ and $\zeta_{ij} := \zeta(p_i - p_j)$. 

Based on the conjecture we further propose the existence of a two to one ramified cover $Y_L \to V_0$ which parametrizes log-free solutions. We call $Y_L$ the \emph{generalized Lam\'e curve} associated to the even degree divisor $L$. Also we conjecture the existence of a correspondence between solutions of equation \eqref{MFE-ln} (i.e.~$(\{A_i\}, B)$ with unitary monodromy) and the non-trivial zeros of certain pre-modular forms analogous to the case $N = 1$.  

\subsection{Structure of the paper}

This paper is an expanded version of the talk I gave at the first ICCM Annual Meeting on December 28, 2017, where I explained joint works with C.-L.~Chai and C.-S.~Lin on \emph{mean field equations, hyperelliptic curves and modular forms} \cite{CLW, LW-II}, as well as part of results in \cite{CKLW} concerning with \emph{a complete understanding of the geometry of critical points of Green's function on tori} which confirmed the conjecture posed in \cite{LW}. 

The first two sections (\S \ref{modular}, \S \ref{s:HE}) contains exactly the material presented in my talk, where all the results are connected to the case of mean field equations or Lam\'e equations with one singular source ($N = 1$). 

\S \ref{modular} is mainly for the case $N = 1$ and $n = 1$ where Green functions and Hecke functions play the main role. \S \ref{s:HE} is on the hyperelliptic geometry of Lam\'e curves and constructions of the pre-modular forms. These lay the foundation of the general cases $n \in \Bbb N$ (and $N = 1$).  

The remaining three sections (\S \ref{s:poly-sys} -- \S \ref{IFT}) are attempts to extend the theory to equations with multiple singular sources ($N \in \Bbb N$). 

Specific results are (1) Theorem \ref{t:degree} on the algebraic degree counting formula (for $\ell$ odd), (2) Proposition \ref{PMK4} and Theorem \ref{t:finite} on finite monodromy groups (for $\ell$ odd), and (3) Example \ref{l=2n} and Example \ref{e:n=2} on the existence of generalized Lam\'e curves in the primitive case (when $\{p_i\}$ admits a center or when $\ell = 4$ and there is no restriction on $\{p_1, p_2, p_3, p_4\}$). 

In the appendix I explain the subtlety of explicit constructions of the pre-modular forms $Z_n$ and how classical approach fails when $n \ge 4$.

\section{Green functions and Hecke functions} \label{modular}
\setcounter{equation}{0}

\subsection{Green functions on tori \cite{LW}}
The Green function $G(z, w)$ on $E = 
\mathbb{C}/\Lambda$, $\Lambda = \mathbb{Z}\omega_1 +
\mathbb{Z}\omega_2$ is the unique function on $E\times E$ which
satisfies
$$
-\triangle_z G(z, w) = \delta_{w}(z) - \frac{1}{|E|}
$$
and $\int_E G(z, w)\,dA = 0$. Translation invariance of $\triangle_z$ implies $G(z,w) = G(z - w, 0)$ and it is enough to consider $G(z) := G(z, 0)$. Asymptotically
$$
G(z) = -\frac{1}{2\pi}\log|z| + O(|z|^2).
$$
As expected, $G$ can be explicitly solved in terms of elliptic functions. Let $z = x + iy$, $\tau := \omega_2/\omega_1 = a + ib
\in \mathbb{H}$ and $q = e^{\pi i\tau}$ with $|q| = e^{-\pi b} <
1$. We have the odd theta function
$$
\T_1(z; \tau) := -i\sum_{n = -\infty}^\infty (-1)^n q^{(n +
\frac{1}{2})^2}e^{(2n + 1)\pi iz}.
$$
Then on $E_\tau$ (notice the $\tau$ dependence),
$$
G(z; \tau) = -\frac{1}{2\pi}\log\left|\frac{\T_1(z; \tau)}{\T'_1(0; \tau)}\right| +
\frac{1}{2b}y^2 + C(\tau)
$$
for some constant $C(\tau)$ depending only on $\tau$. Then
\begin{equation} \label{e:Gz}
\nabla G(z) = 0 \Longleftrightarrow \frac{\p G}{\p z} \equiv \frac{-1}{4\pi}\left((\log\T_1)_z + 2\pi
i \frac{y}{b}\right) = 0.
\end{equation}

Now we translate equation \eqref{e:Gz} into Weierstrass theory. Recall the Weierstrass elliptic and quasi-elliptic functions with periods $\Lambda$ \cite{Whittaker}:
\begin{equation*}
\begin{split}
\wp(z) &:= \frac{1}{z^2} + \sum_{\omega \in \Lambda^\times} \Big(\frac{1}{(z - \omega)^2} - \frac{1}{\omega^2}\Big), \\
\zeta(z) &:= -\int^z \wp = \frac{1}{z} + \sum_{\omega \in \Lambda^\times} \big( \frac{1}{z - \omega} + \frac{1}{\omega} + \frac{z}{\omega^2} \Big), \\
\sigma(z) &:= \exp \int^z \zeta(w)\,dw = z + \cdots. 
\end{split}
\end{equation*}
The function $\sigma$ is entire, odd with a simple zero on lattice points and
$$
\sigma(z + \omega_i) = -e^{\eta_i(z +
\frac{1}{2}{\omega_i})}\sigma(z),
$$
where $\eta_i = \zeta(z + \omega_i) - \zeta(z) =
2\zeta(\frac{1}{2}\omega_i)$ are the quasi-periods. Indeed, $\sigma(z)$ is related to the theta function by
$$
\sigma(z) = e^{\eta_1 z^2/2}\frac{\T_1(z)}{\T_1'(0)}.
$$
Hence the main term in the right-hand-side of \eqref{e:Gz} is
$$
(\log\T_1(z))_z = \zeta(z) - \eta_1 z.
$$

For simplicity we set $\omega_1 = 1$, $\omega_2 = \tau = a + bi$, $\omega_3 = \omega_1 + \omega_2$, and 
$$
z = x + yi = r\omega_1 + s\omega_2 = (r + sa) + sbi.
$$ 
By Legendre's relation $\eta_1 \omega_2-\eta_2 \omega_1 = 2\pi i$ we compute
\begin{equation*}
\begin{split}
(\log \T_1)_z + 2\pi i \frac{y}{b} &= (\zeta(z) - \eta_1 z) + 2\pi i s \\
&=\zeta(z) - \eta_1 r - \eta_1 s \omega_2 + (\eta_1 \omega_2 - \eta_2)s \\
&= \zeta(z) - r \eta_1 - s \eta_2.
\end{split}
\end{equation*}
Hence $\nabla G(z) = 0$ if
and only if
\begin{equation} \label{e:Hecke}
Z := -4\pi G_z = \zeta(r\omega_1 + s\omega_2) - (r\eta_1
+ s\eta_2) = 0.
\end{equation}

\begin{question} [Structure of critical points] \label{q:critical}
How many critical points can $G$ have in
$E$? What is its dependence in $\tau \in \Bbb H$?
\end{question}

The 3 half-periods are trivial critical points. Indeed,
$$
G(z) = G(-z) \Longrightarrow \nabla G(z) = -\nabla G(-z).
$$
Let $p = \frac{1}{2}{\omega_i}$ then $p = -p$ in $E$ and so
$\nabla G(p) = -\nabla G(p) = 0$. Other critical points must appear in pair $\pm z \in E$.

\begin{example} [Maximal principle] \label{e:rect} 
For rectangular tori $E$: $(\omega_1,
\omega_2) = (1, \tau = bi)$, the half-periods $\frac{1}{2}\omega_i$, $i = 1, 2, 3$
are precisely all the critical points.
\end{example}

\begin{example} [$\Bbb Z_3$-symmetry] For the 60 degree torus $E$ with $\tau = \rho := e^{\pi
i/3}$, it is easy to check that there are (at least) 2 more critical points 
$$
p = \tfrac{1}{3}\omega_3, \qquad -p = -\tfrac{1}{3}\omega_3 \equiv \tfrac{2}{3}\omega_3.
$$
\end{example}

Question \ref{q:critical} is indeed the starting point of the whole project around 2003. In \cite{LW} we showed that there are at most 5 critical points and conjectured the $\tau$ dependence in a precise manner. The conjecture was recently solved in \cite{CKLW} and I will give a sketch on it in this section.

\subsection{Periodic singular Liouville equations}

Geometry of the Green function $G$ plays a
fundamental role in the non-linear mean field equations. On a flat torus $E$ it
takes the form
\begin{equation} \label{e:mfe1}
\triangle u + e^u = \rho \delta_0, \qquad \rho \in \Bbb R_+.
\end{equation}
(The name comes from the fact that It is the mean field limit of Euler flow in statistic physics.) 

When $\rho \not\in 8\pi \mathbb{N}$, it was proved in \cite{CL2} that the Leray-Schauder degree is
$$
d_\rho = n + 1 \quad \mbox{if} \quad 8 n \pi < \rho < 8(n +
1)\pi.
$$
The degree is independent of the shape (moduli) of $E$. The interesting cases (critical values) are $\rho = 8\pi n$ where the degree theory fails completely.

\begin{theorem}[Existence criterion via $\nabla G$ for $n = 1$] \cite[Theorem 1.1]{LW} {\ }

For $\rho = 8\pi$, the mean field equation on $E = \mathbb{C}/\Lambda$ in \eqref{e:mfe1} has solutions if and only if $G$ has more than 3 critical
points. Moreover, each extra pair of critical points $\pm p$
corresponds to an one parameter family of solutions $u_\lambda$,
where 
$$
\lim_{\lambda \to \infty} u_\lambda(z) \quad \mbox{and} \quad \lim_{\lambda \to -\infty} u_\lambda(z)
$$ 
blow up precisely at $z \equiv \pm p$.
\end{theorem}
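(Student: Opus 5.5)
This is the theorem of \cite{LW}; I would prove it through the Liouville developing map together with the Lam\'e equation for $n=1$.

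\textbf{Step 1: reduce a solution to a Lam\'e equation.} Since $\rho = 8\pi$, $\ell = 2$ is even, so a solution $u$ has a type II normalized developing map $f$. The Schwarzian $S(f) = -2\wp(z) - 2B$ is elliptic. Hence $f = w_1/w_2$ for two independent solutions of $w'' = (2\wp + B)w$, that is, \eqref{e:Lame} with $\eta = 1$.

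\textbf{Step 2: identify the solutions of the ODE.} The Hermite--Halphen ansatz with $n = 1$ gives the solutions $w_{\pm a} = e^{\pm z\zeta(a)}\sigma(z \mp a)/\sigma(z)$, with $B = \wp(a)$. When $a \notin E[2]$ these are independent. The curve $X_1$ is simply $E$, because the system \eqref{e:Xn} is empty for $n = 1$. The ratio is $f = e^{2z\zeta(a)}\sigma(z-a)/\sigma(z+a)$, and using the transformation law of $\sigma$ one computes
$$
f(z + \omega_i) = e^{2(\omega_i\zeta(a) - a\eta_i)} f(z).
$$

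\textbf{Step 3: translate the type II condition.} Type II requires $|f(z+\omega_i)| = |f(z)|$ for $i = 1,2$, i.e.\ $\mathrm{Re}(\omega_i\zeta(a) - a\eta_i) = 0$ for both $i$. Write $a = r\omega_1 + s\omega_2$ and use Legendre's relation. These two real equations then become
$$
\zeta(a) - r\eta_1 - s\eta_2 = 0,
$$
which is $Z(a) = 0$, i.e.\ $\nabla G(a) = 0$ by \eqref{e:Hecke}. One still has to exclude $a \in E[2]$. In that case $w_a = w_{-a}$ and the second solution carries a logarithm, so the monodromy is not unitary; equivalently, $\ell_1(B) = 0$ at those $B$. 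Hence solutions exist if and only if $G$ has a critical point $p \notin E[2]$, i.e.\ more than 3 critical points. Conversely, from such a $p$ the maps $f_\lambda = e^{\lambda} f$ with $\lambda \in \Bbb R$ give type II developing maps, and through \eqref{e:dev} they produce a one-parameter family $u_\lambda$.

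\textbf{Step 4: blow-up behaviour.} As $\lambda \to +\infty$, the quantity $u_\lambda = \log\bigl(8|f_\lambda'|^2/(1 + |f_\lambda|^2)^2\bigr)$ concentrates where $f = 0$, namely at $z = a$. As $\lambda \to -\infty$ it concentrates at the poles of $f$, namely $z = -a$. The pair $\pm p$ determines $B = \wp(p)$. Replacing $p$ by $-p$ swaps $f$ with $1/f$, which is the same family up to $\lambda \mapsto -\lambda$; this gives the stated correspondence.

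The main obstacle is the completeness direction: showing that every solution actually arises this way. For that I need Liouville's theorem together with integrality, which makes $f$ globally meromorphic. I also need to rule out the degenerate case $a \in E[2]$, which requires showing that the log-free condition fails exactly at the roots of $\ell_1(B)$.
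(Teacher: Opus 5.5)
Your proposal is correct and follows essentially the paper's argument. The paper writes the type II developing map as $f(0)\exp\int_0^z g$ with $g=\wp'(p)/(\wp(z)-\wp(p))$ and imposes $\int_{L_i}g=F_i(p)\in\sqrt{-1}\,\Bbb R$; since $\exp\int g$ is, up to sign, your $w_p/w_{-p}=e^{2z\zeta(p)}\sigma(z-p)/\sigma(z+p)$, this is exactly your unimodularity of the multipliers $e^{2(\omega_i\zeta(p)-p\eta_i)}$, turned into $Z(p)=0$ by the same use of Legendre's relation.

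The one point to tighten is your exclusion of $p\in E[2]$. The paper's route avoids this case, because $g$ elliptic forces the zero $p$ and the pole $-p$ of $f$ to be distinct points of $E$. Your stated reason is not accurate: the equation $w''=(2\wp+B)w$ never has a logarithm at $z=0$. What actually fails at $B=e_k$ is unitarity. With $a=\tfrac12\omega_k$, a second solution is $w_a\,(\zeta(z+\tfrac12\omega_k)+e_kz)$, so the projective monodromy consists of translations $f\mapsto f+\eta_j+e_k\omega_j$. By Legendre's relation these are not both trivial, so the monodromy is parabolic and not conjugate into ${\rm PSU}(2)$.

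Also, $S(f)=-4\wp-2B$ here, not $-2\wp-2B$; your Lam\'e equation $w''=(2\wp+B)w$ is nevertheless the correct one.
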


The purpose of this section is to explain the above correspondence in a more general setting for all $n \in \Bbb N$ obtained in \cite{CLW}.

\subsubsection{Structure of solutions.}

Liouville's theorem says that any solution $u$ of
$\triangle u + e^u = 0$ in a simply connected domain $D
\subset \mathbb{C}$ is of the form
$$
u = \log \frac{8|f'|^2}{(1 + |f|^2)^2},
$$
where $f$, called a developing map of $u$, is meromorphic in $D$.

It is straightforward to show that for $\rho = 8\pi \eta \in \Bbb R$,
\begin{equation} \label{e:S(f)}
S(f) \equiv \frac{f'''}{f'} - \frac{3}{2}
\left(\frac{f''}{f'}\right)^2 = u_{zz} - \frac{1}{2} u_z^2 = -2\eta(\eta + 1)\frac{1}{z^2} + O(1).
\end{equation}
That is, any developing map $f$ of $u$ has the same Schwartz
derivative $S(f)$, which is elliptic on $E$. Hence there is a $B \in \Bbb C$ such that 
$$
S(f) = - 2(\eta(\eta + 1) \wp(z) + B).
$$

By the theory of ODE, locally $f = w_1/w_2$ for two solutions $w_i$ of the Lam\'e equation $L_{\eta, B}\, y = 0$:
\begin{equation} \label{e:lame}
y'' +\frac{1}{2} S(f) y = y'' - (\eta(\eta + 1) \wp(z) + B) y = 0.
\end{equation}
Furthermore, for any two developing maps $f$ and $\tilde f$ of
$u$, there exists $S = \begin{pmatrix} p & -\bar q\\ q & \bar
p\end{pmatrix}\in PSU (2)$ such that 
$$
\tilde f = Sf := \frac{pf - \bar q}{qf + \bar p}.
$$
So we conclude that  solutions to the mean field equation correspond to Lam\'e equations with \emph{unitary projective monodromy groups}. 

Geometrically the Liouville equation is simply the
prescribing Gauss curvature equation in the new metric $g = e^w
g_0$ over $D$, where $w = u/2 - \log \sqrt{2}$ and $g_0$ is the Euclidean flat metric on
$\mathbb{C}$:
\begin{equation} \label{Gauss}
K_g = -e^{-u} \triangle u = 1.
\end{equation}
It is then clear the inverse stereographic projection
$\mathbb{C} \to S^2$
\begin{equation*}
(X, Y, Z) = \Big( \frac{2x}{1 + x^2 + y^2},
\frac{2y}{1 + x^2 + y^2}, \frac{-1 + x^2 + y^2}{1 + x^2 +
y^2}\Big)
\end{equation*}
provides solutions to (\ref{Gauss}) with conformal factor
\begin{equation*}
e^w = e^{\frac{1}{2} u - \frac{1}{2}\log 2} = \frac{2}{1 + |z|^2}.
\end{equation*}

Starting from this special solution for $D = \Delta$,
the unit disk, general solutions on simply connected domain $D$
can be obtained by using the Riemann mapping theorem via a
holomorphic map $f: D \to \Delta$. The conformal factor is then the one as expected:
\begin{equation*}
e^u = \frac{8|f'|^2}{(1 + |f|^2)^2}.
\end{equation*}

The problem is to glue the local developing maps to a ``global one''. This is a monodromy problem on the once punctured torus $E^\times = E \backslash \{0\}$. Since it  is homotopic to ``8'', we have for $x_0 \in E^\times$,
$$
\pi_1(E^\times, x_0) = \Bbb Z * \Bbb Z
$$ 
is a free group of rank two. 

\begin{lemma} [Developing map for $\eta = \tfrac{1}{2} \ell \in \tfrac{1}{2}\Bbb Z$]
Given $\Lambda$, for $\rho = 4\pi \ell$, $\ell \in
\mathbb{N}$, by analytic continuation across $\Lambda$, $f$ is glued into a meromorphic
function on $\mathbb{C}$. (Instead of on $E = \Bbb C/\Lambda$.)
\end{lemma}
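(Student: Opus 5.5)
The plan is to show that $f$ continues analytically along every path in $\Bbb C \setminus \Lambda$, that the result is single-valued there, and that it has only poles (no essential or branching singularities) at the lattice points.

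\emph{Step 1: continuation away from $\Lambda$.} The solution $u$ lives on $E$, so its lift $\tilde u$ to $\Bbb C$ is a smooth solution of $\triangle \tilde u + e^{\tilde u} = 0$ on $\Bbb C \setminus \Lambda$. By Liouville's theorem, on each simply connected disc $D \subset \Bbb C\setminus\Lambda$ there is a developing map, unique up to ${\rm PSU}(2)$. Starting from a germ $f$ at a base point, continue along paths by matching, on overlaps, the local developing map through the unique ${\rm PSU}(2)$ element. Equivalently, $f = w_1/w_2$ for a basis of the Lam\'e equation \eqref{e:lame}, whose coefficients $\eta(\eta+1)\wp(z) + B$ are holomorphic on $\Bbb C\setminus\Lambda$. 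Either way, $f$ continues meromorphically along every path avoiding $\Lambda$.

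\emph{Step 2: no monodromy around a lattice point.} Since $\pi_1(\Bbb C\setminus\Lambda)$ is generated by small loops around the points $\omega\in\Lambda$, single-valuedness reduces to a local statement: $f$ has trivial projective monodromy around each $\omega$. This step is where $\rho = 4\pi\ell$, $\ell\in\Bbb N$, is used.

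Near $\omega$, the Lam\'e equation has a regular singular point with indicial equation $r(r-1) = \eta(\eta+1)$. Its exponents are $-\eta$ and $\eta+1$, which differ by $2\eta+1 = \ell+1 \in \Bbb N$. So the local solutions are $w_1 = (z-\omega)^{\eta+1}h_1$ and $w_2 = (z-\omega)^{-\eta}(h_2 + c\, w_1 (z-\omega)^{\eta}\log(z-\omega))$ with $h_i$ holomorphic.

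The key point, and the main obstacle, is to show that the logarithmic coefficient $c$ vanishes. I would argue as follows. The local monodromy of $f$ around $\omega$ must be a ${\rm PSU}(2)$ element, since two developing maps of the same $u$ differ by ${\rm PSU}(2)$. On the other hand, if $c\neq 0$, the monodromy of $f = w_1/w_2$ is conjugate to a nontrivial parabolic map $f\mapsto f + \text{const}$ (in suitable coordinates). A nontrivial parabolic element is not conjugate to any element of ${\rm PSU}(2)$, because unitary elements are elliptic or trivial. Hence $c=0$.

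\emph{Step 3: the monodromy is trivial and the singularity is a pole.} With $c=0$, the ratio $w_1/w_2 = (z-\omega)^{2\eta+1}\,h_1/h_2 = (z-\omega)^{\ell+1} h_1/h_2$. Since $\ell + 1 \in \Bbb N$, this ratio is single-valued near $\omega$, so the projective monodromy around $\omega$ is trivial.

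For the same reason $f$ has at worst a pole at $\omega$, and it is meromorphic rather than essentially singular. In fact, after a ${\rm PSU}(2)$ normalization one expects $f$ or $1/f$ to vanish to order $\ell+1$ there, consistent with $e^u \sim |z|^{2\ell}$ near $\omega$, i.e.\ $\rho = 4\pi\ell$.

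\emph{Step 4: global conclusion.} Trivial local monodromy at every lattice point, together with Step 1 and the fact that the small loops generate $\pi_1(\Bbb C\setminus\Lambda)$, gives a single-valued meromorphic function on $\Bbb C\setminus\Lambda$ with removable-to-pole singularities at $\Lambda$, that is, a meromorphic function on $\Bbb C$.

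There is no reason for $f$ to descend to $E$: the loops $\omega_1,\omega_2$ still act by the ${\rm PSU}(2)$ matrices $S_1,S_2$. This is why the conclusion is on $\Bbb C$ rather than on $E$.
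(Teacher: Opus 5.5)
Your argument is correct and follows the same route as the paper: the indicial exponents $-\eta$ and $\eta+1$ differ by $\ell+1\in\Bbb N$, so near each lattice point $f$ is a M\"obius image of $(z-\omega)^{\ell+1}h_1/h_2$, which is meromorphic. Your version also supplies two points that the paper's one-line proof takes for granted: the logarithmic term is excluded because a nontrivial parabolic local monodromy cannot lie in ${\rm PSU}(2)$, and trivial local monodromy at each lattice point suffices because small loops around the punctures generate $\pi_1(\Bbb C\setminus\Lambda)$.
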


\begin{proof}
The indicial equation of the Lam\'e equation \eqref{e:lame} is $\lambda(\lambda - 1) = \eta(\eta + 1)$, hence the two local solutions around a lattice point takes the form $w_1 = z^{\eta + 1} g_1(z)$, $w_2 = z^{-\eta} g_2(z)$ with $g_i$ holomorphic and $g_i(0) \ne 0$, and then $f = z^{2\eta + 1} g_1(z)/g_2(z)$ which is meromorphic at $z = 0$.
\end{proof}

Now there are two type of constraints on $f$: global and local.
\begin{itemize}
\item[$\bullet$]
\emph{First constraint from the double periodicity:}
$$
f(z + \omega_1) = S_1 f,\quad f(z + \omega_2) = S_2 f
$$
with $S_1S_2 = \pm S_2 S_1$ (abelian projective monodromy).

\item[$\bullet$]
\emph{Second constraint from the Dirac singularity:}
\begin{itemize}
\item [(1)] If $f(z)$ has a zero/pole at $z_0 \not\in {\Lambda}$ then order $r = 1$. \medskip

\item [(2)] $f(z) = a_0 + a_{\ell + 1} (z - z_0)^{\ell + 1} + \cdots$ is
regular at $z_0 \in {\Lambda}$.
\end{itemize}
\end{itemize}

\subsubsection{Type I (Topological) Solutions $\Longleftrightarrow$ $\ell = 2n + 1$} This means that 
\begin{equation*}
f(z + \omega_1) = -f(z), \qquad f(z + \omega_2) = \frac{1}{f(z)}.
\end{equation*}
Then the loagarithmic derivative
$$
g = (\log f)' = {f'}/{f}
$$ 
is elliptic on the doubled torus $E' = \Bbb C/\Lambda'$ where $\Lambda' = \Bbb Z\omega_1 + \Bbb Z 2\omega_2$, with the only zeros at $z_0 \equiv 0 \pmod \Lambda$ of order $\ell = 2n + 1$. Using Weierstrass functions on $E'$, $g(z)$ takes the form
\begin{equation*} \label{g-zeta}
g(z) = \sum_{i = 1}^\ell (\zeta(z - p_i) - \zeta(z - p_i - \omega_2))
+ c.
\end{equation*} 

The equations $0 = g(0) = g''(0) = g^{(4)}(0) = \cdots$ implies that $f$ is an even function (by a non-trivial symmetric function argument \cite[\S 2.1]{CLW}). So $f$ has simple zeros at $\pm p_1, \ldots, \pm p_n$ and $\omega_1/2$.

The remaining equations $0 = g'(0) = g'''(0) = g^{(5)}(0) = \cdots$ leads to the polynomial system for $\wp(p_i)$'s:

\begin{theorem}[Type I integrability, $\rho = 4\pi(2n + 1)$] \cite[Theorem 0.4]{CLW} {\ }
\begin{itemize}
\item [(1)] For $\rho = 4\pi \ell$, $\ell = 2n + 1$. All solutions are of type I and even. 
$f$ has simple zeros at $\omega_1/2$ and $\pm p_i$ for $i = 1,
\ldots, n$, and poles $q_i = p_i + \omega_2$. 

\item [(2)] For $x_i := \wp(p_i)$, $\tilde x_i := \wp(q_i)$, and $m = 1, \ldots, n$,
$$
\sum\nolimits_{i = 1}^n x_i^m - \sum\nolimits_{i = 1}^n \tilde
x_i^m = c_m, \quad (x_m - e_2)(\tilde x_m - e_2) = \mu,
$$
for some constants $c_m$ and $\mu = (e_2 - e_1)(e_2 - e_3)$. 

\item [(3)] (Brioschi--Halphen--Crawford) There is a polynomial $p_n(B; g_2, g_3)$ of degree $n + 1$ such that $p_n(B) = 0$ if and only of the corresponding Lam\'e equation $L_{\eta = n + {1}/{2}, B}\, y = 0$ provides a type I solution. This is also equivalent to that it has finite monodromy group $M$ (in fact $PM \cong K_4$). 
\end{itemize}
\end{theorem}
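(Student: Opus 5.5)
Here is how I would prove the theorem for $\ell = 2n+1$, $N=1$. The plan is to work entirely with the type I developing map $f$ and its logarithmic derivative $g = f'/f$ on the doubled torus $E' = \Bbb C/\Lambda'$. I will extract a polynomial system from the vanishing of the Taylor coefficients of $g$ at $0$, and then match that system with the log-free condition on the Lam\'e equation \eqref{e:lame}.

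\emph{Step 1: every solution is of type I.} Near $0$ the local exponents of \eqref{e:lame} are $\eta+1$ and $-\eta$, and they differ by the odd integer $\ell+1 = 2n+2$. So the local monodromy at $0$ is $\pm 1$, and in $PSL_2$ it is trivial precisely when no logarithm appears. Hence the projective monodromy is a representation of $\pi_1(E^\times)$ factoring through $\Bbb Z^2$, so $S_1S_2 = \pm S_2S_1$ in $SU(2)$.
\begin{itemize}
\item If $S_1, S_2$ commute in $SU(2)$, they can be simultaneously diagonalized. Then $f$ is of type II, and $\log f$ has an additive character, so $g = f'/f$ is elliptic on $E$ itself.
\item In that case local condition (2) forces $g$ to vanish to order $\ell = 2n+1$ at $0$, while $g$ has only simple poles with integer residues. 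This should be excluded by a parity and residue-counting argument: the residues give zero sum, and the odd order forces an incompatible symmetry.
\item So $S_1, S_2$ anticommute. Normalizing by $PSU(2)$ then gives exactly the relations \eqref{type-I}.
\end{itemize}

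\emph{Step 2: evenness and part (1).} With \eqref{type-I}, $g$ is elliptic on $E'$. Its poles are simple with residue $+1$ at the zeros $p_i$ of $f$ and $-1$ at the poles $p_i+\omega_2$, and it has a zero of order $\ell$ at $0$. This yields the displayed $\zeta$-expansion of $g$. I would then take the odd Taylor conditions $g(0) = g''(0) = \cdots = 0$ and, following the symmetric-function argument of \cite[\S 2.1]{CLW}, show that they force the zero set of $f$ to be symmetric under $z\mapsto -z$. Hence $f(-z) = \pm f(z)$, and the normalization gives $f$ even. Counting zeros in a fundamental domain of $E'$ then gives the simple zeros $\omega_1/2$ and $\pm p_i$, $i=1,\dots,n$, together with the poles $q_i = p_i+\omega_2$.

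\emph{Step 3: part (2).} I would expand the remaining conditions $g'(0) = g'''(0) = \cdots = 0$ using $\zeta(z-p)+\zeta(z+p) = 2\zeta(z) + \wp'(z)/(\wp(z)-\wp(p))$ and the Laurent expansions in $z$. This should produce power-sum identities $\sum x_i^m - \sum \tilde x_i^m = c_m$, with constants $c_m$ coming from the expansion of $\wp$ on $E'$. The relation $(x_i-e_2)(\tilde x_i - e_2) = \mu$ is the classical addition law for translation by the half period $\omega_2$: $\wp(z+\omega_2) - e_2 = (e_2-e_1)(e_2-e_3)/(\wp(z)-e_2)$.

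\emph{Step 4: part (3).} I would substitute the Hermite--Halphen type product $w = \prod(\ldots)$ and compare with the Schwarzian identity $S(f) = -2(\eta(\eta+1)\wp+B)$ to express $B$ as a polynomial in the $x_i$. Eliminating the $x_i, \tilde x_i$ from the system of Step 3 then yields $p_n(B)=0$. To get the degree $n+1$, I would identify $p_n$ with the resultant expressing that the recursion for the Frobenius coefficients of the exponent $-\eta$ solution hits zero at step $2n+2$. That coefficient is a polynomial in $B$ of degree $n+1$ by the weight count, with $g_2, g_3$ of weights $4, 6$.
\begin{itemize}
\item A log-free equation has trivial local projective monodromy, so $PM$ is abelian.
\item Unitarity and the anticommuting normalization then give $PM \cong K_4$, hence finite.
\item Conversely, finite monodromy forces log-freeness, so the three conditions are equivalent.
\end{itemize}

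I expect the main obstacle to be Step 2, the evenness. The symmetric-function argument showing that the odd-order vanishing conditions force $\{p_i\} = \{-p_i\}$ is genuinely nonlinear. I would first try the Newton identities applied to the odd power sums of $\zeta$-values.
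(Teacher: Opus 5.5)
\textbf{Gap in Step 4: the direction ``log-free $\Rightarrow$ type I solution / finite monodromy''.} For an arbitrary $B$ with $p_n(B)=0$ you know nothing about unitarity. Unitarity is exactly what must be produced in order to obtain a solution $u$, so you cannot assume it. Your observation ``trivial local projective monodromy, so $PM$ is abelian'' does not suffice. Two-generator abelian subgroups of ${\rm PSL}(2,\Bbb C)$ include non-unitarizable ones, for instance two commuting unipotents, or a diagonal pair with eigenvalues off the unit circle. The anticommutation you obtained in Step 1 was derived from an actual PDE solution, so it cannot be invoked here. As written you only get: type I $\Rightarrow$ finite $\Rightarrow$ log-free $\Leftrightarrow p_n(B)=0$. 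You do not get the converse that makes (3) an equivalence.

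\textbf{The missing idea: the sign of the local monodromy.} The exponents at $0$ are $-\eta=-n-\tfrac12$ and $\eta+1=n+\tfrac32$. Their difference $2n+2$ is even, not odd as you wrote. The point is that both exponents are half-integers, so a log-free equation has local monodromy exactly $-I_2$ in ${\rm SL}(2,\Bbb C)$, not ``$\pm 1$''. Hence for every log-free $B$ the generators satisfy $S_2^{-1}S_1^{-1}S_2S_1=-I_2$. The eigenvector argument of Proposition~\ref{PMK4} then gives $S_1^2$ and $S_2^2$ scalar, so $PM\cong K_4$. Being finite, $PM$ is conjugate into ${\rm PSU}(2)$, and the ratio of solutions in the adapted basis is a type I developing map; this is Corollary~\ref{c:type-I}, the route the paper takes.

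\textbf{Consequences for your other steps.} The same observation settles Step 1 at once, since a commuting lift to ${\rm SU}(2)$ is impossible. Your residue/parity route can be made to work, but only as a degree count, not via an ``incompatible symmetry''. First, $f(0)\ne 0,\infty$, since otherwise $g$ would be a nonconstant elliptic function with no zeros. So the zero divisor of $g$ is exactly $\ell\cdot[0]$, because $f'\neq 0$ off $\Lambda$. The poles of $g$ are simple with residues $\pm 1$ summing to zero, hence even in number, which contradicts $\ell$ odd.

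Similarly, in Step 2 the case $f(-z)=-f(z)$ is excluded by $f(0)\neq 0$, not by a normalization. Apart from this, Steps 2--3 match the paper's sketch, which also defers evenness to \cite[\S 2.1]{CLW}.

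Your Frobenius recursion is a fine substitute for the paper's Schwarzian recursion (with $N=1$, $A_1=0$) and makes the Hermite--Halphen elimination unnecessary. Note that the weight count only bounds the degree by $n+1$. Exactness comes from the factors $2j(2j-2n-2)\neq 0$ for $1\le j\le n$ in the recursion, which make the $B^{n+1}$ coefficient nonzero.
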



\subsubsection{Type II (Scaling Family) Solutions $\Longleftrightarrow$ $\eta = n$ ($\ell = 2n$)} \label{ss:type-II} Namely,
\begin{equation} \label{e:II}
f(z + \omega_1) = e^{2i\theta_1}f(z), \qquad f(z + \omega_2) =
e^{2i\theta_2}f(z)
\end{equation}
for two real constants $\theta_1$, $\theta_2$. If $f$ satisfies \eqref{e:II} then $e^\lambda f$ also satisfies \eqref{e:II}
for any $\lambda\in \Bbb R$. Thus
$$
u_\lambda(z) = \log \frac{8 e^{2\lambda}|f'(z)|^2}{(1 +
e^{2\lambda}|f(z)|^2)^2}
$$
is a scaling family of solutions with developing maps $\{e^\lambda
f\}$, and $u_\lambda$ is a \emph{blow-up sequence}. The blow-up points for $\lambda \to \infty$
(resp.~$-\infty$) are precisely zeros (resp.~poles) of $f(z)$.

Now the logarithmic derivative $g = (\log f)'$ is elliptic on $E = \Bbb C/\Lambda$, with highest order zero at $z= 0$. Namely $z = 0$ is the only zero of $g$ and 
$$
{\rm ord}_{z = 0}\, g(z) = \ell = 2n.
$$

Again, a symmetric function argument shows that the constraint on odd derivatives $0 = g'(0) = g'''(0) = \cdots = g^{(2n -1)}(0)$ implies that $g$ is even \cite[Theorem 5.2]{CLW}. Thus $g(z)$ has zeros $\pm a_1, \cdots, \pm a_n$ and we may write
\begin{equation*} 
g(z) = \frac{\wp'(a_1)}{\wp(z) - \wp(a_1)} + \cdots +
\frac{\wp'(a_n)}{\wp(z) - \wp(a_n)}
\end{equation*}
which is further constrained by $0 = g''(0) = \cdots = g^{(2n - 2)}(0)$ ($g(0) = 0$ is automatic). This gives rise to the first $n - 1$ (algebraic) equations on $a_1, \ldots, a_n$ (see \S \ref{s:HE} for the explicit equations). And then
$$
f(z) = f(0) \exp \int_0^z g(\xi)\,d\xi
$$
should satisfy the $n$-th ``equation on monodromy''
$$
\int_{L_i} g \in \sqrt{-1}\Bbb R, \qquad i = 1, 2,
$$
where $L_1$, $L_2$ are the fundamental 1-cycles.

To study these \emph{periods integrals}, for $a \not\in \frac{1}{2}\Lambda$ we define
\begin{equation} \label{e:pI}
\begin{split}
F_i(a) &:= \int_{L_i} \frac{\wp'(a)}{\wp(\xi) - \wp(a)}\, d\xi \\
&= \int_{L_i} \Big(2\zeta(a) - \zeta(a + \xi) - \zeta(a - \xi) \Big)\, d\xi.
\end{split}
\end{equation}

\begin{lemma}[Periods integrals and critical points] \cite[Proposition 2.3]{LW2} {\ }

Let $a = r\omega_1 + s\omega_2$, then modulo $4\pi i \Bbb N$ we have
\begin{equation*}
\begin{split}
F_1(a) &= 2(\omega_1 \zeta(a) - \eta_1 a) \\
&= 2(\zeta(a) - r\eta_1 - s\eta_2)\omega_1 - 4\pi i s,\\
F_2(a) &= 2(\omega_2 \zeta(a) - \eta_2 a) \\
&= 2(\zeta(a) - r\eta_1 - s\eta_2)\omega_2 + 4\pi i r.
\end{split}
\end{equation*}
In particular, the ``''monodromy equation'' is equivalent to a non-holomorphic equation in gradients of the Green function:
$$
\int_{L_i} g\,d\xi = \sum_{j = 1}^n F_i(a_j) \in \sqrt{-1} \Bbb R, \quad i = 1, 2 \quad \Longleftrightarrow \quad \sum_{j = 1}^n \nabla G(a_j) = 0.
$$
\end{lemma}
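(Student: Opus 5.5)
Since $F_1$ and $F_2$ are the periods of the single term $\wp'(a)/(\wp(\xi)-\wp(a))$, I will compute each one separately in closed form using the second expression in \eqref{e:pI}. After that, the statement about Green functions follows by combining \eqref{e:Hecke} with Legendre's relation.

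The first step is to integrate the quasi-periodic functions termwise along $L_i$, the segment from $\xi_0$ to $\xi_0+\omega_i$ (chosen to avoid $\pm a$). The constant $2\zeta(a)$ contributes $2\omega_i\zeta(a)$. For the other two terms, $-\zeta(a+\xi)$ and $-\zeta(a-\xi)$, I use the antiderivatives $-\log\sigma(a+\xi)$ and $+\log\sigma(a-\xi)$. The transformation law $\sigma(z+\omega_i)=-e^{\eta_i(z+\omega_i/2)}\sigma(z)$ then shows that the total change of these logarithms along $L_i$ is
\[
-\eta_i(a+\xi_0+\tfrac12\omega_i)+\eta_i(a-\xi_0-\tfrac12\omega_i)=-2\eta_i a .
\]
This holds modulo $2\pi i\Bbb Z$ coming from the sign $-1$ and the branch choices of $\log$. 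The $\xi_0$-dependence cancels, so $F_i(a)=2(\omega_i\zeta(a)-\eta_i a)$ modulo $2\pi i\Bbb Z$. The main technical point is to check that the ambiguity is in fact a multiple of $4\pi i$. I would try to show this by noting that $\wp'(a)/(\wp-\wp(a))=(\log(\wp-\wp(a)))'$ has residues $\pm1$ at $\pm a$, and that the two logarithms contribute equal winding jumps. I regard this as bookkeeping rather than a genuine obstacle, since for the final application only the real parts matter.

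Next I substitute $a=r\omega_1+s\omega_2$ with $\omega_1=1$. Then
\[
F_1=2(\zeta(a)-r\eta_1-s\eta_2)+2s(\eta_2-\eta_1\omega_2)=2Z\,\omega_1-4\pi i s
\]
by Legendre's relation $\eta_1\omega_2-\eta_2\omega_1=2\pi i$. Similarly,
\[
F_2=2\omega_2\zeta(a)-2\eta_2(r+s\omega_2)=2Z\omega_2+2r(\eta_1\omega_2-\eta_2)=2Z\omega_2+4\pi i r .
\]
Here $Z(a)=\zeta(a)-r\eta_1-s\eta_2=-4\pi G_z(a)$ by \eqref{e:Hecke}.

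Finally, I sum over $j$. Since $r_j,s_j$ are real, the condition $\sum_j F_i(a_j)\in\sqrt{-1}\Bbb R$ for $i=1,2$ is equivalent to
\[
\operatorname{Re}(W\omega_1)=\operatorname{Re}(W\omega_2)=0,\qquad W:=\sum_j Z(a_j).
\]
Because $\omega_1,\omega_2$ are $\Bbb R$-linearly independent, $W\omega_1$ and $W\omega_2$ both purely imaginary forces $W=0$. Indeed, if $W\neq0$, then $\omega_2/\omega_1=(W\omega_2)/(W\omega_1)$ would be real, contradicting $\tau\in\Bbb H$. Conversely, $W=0$ trivially gives both conditions. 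Since $W=-4\pi\sum_j G_z(a_j)$ and $G$ is real, $W=0$ is equivalent to $\sum_j\nabla G(a_j)=0$.
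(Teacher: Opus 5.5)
The paper gives no proof of this lemma; it defers to \cite[Proposition 2.3]{LW2}. Your route is the standard one: integrate $2\zeta(a)-\zeta(a+\xi)-\zeta(a-\xi)$ through $\log\sigma$, apply Legendre's relation as in the computation of $(\log\T_1)_z+2\pi i y/b$ in \S\ref{modular}, then use the $\Bbb R$-independence of $\omega_1,\omega_2$. Your Legendre substitutions are correct, and they work without normalizing $\omega_1=1$. The final equivalence is also correct:
\begin{itemize}
\item the $-4\pi i s_j$, $4\pi i r_j$ and $2\pi i\Bbb Z$ terms are purely imaginary;
\item $W\omega_1, W\omega_2\in\sqrt{-1}\Bbb R$ with $W\neq 0$ would force $\tau\in\Bbb R$;
\item since $G$ is real, $\sum_j G_z(a_j)=0 \Longleftrightarrow \sum_j\nabla G(a_j)=0$.
\end{itemize}

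There is a sign slip in your display. Along $L_i$ the argument $a-\xi$ \emph{decreases} by $\omega_i$, and $\sigma(z-\omega_i)=-e^{-\eta_i(z-\frac12\omega_i)}\sigma(z)$. So $\log\sigma(a-\xi)$ changes by $-\eta_i(a-\xi_0-\tfrac12\omega_i)$ up to an odd multiple of $\pi i$, not by $+\eta_i(\cdots)$. As written, your left-hand side equals $-2\eta_i\xi_0-\eta_i\omega_i$, so the $\xi_0$-dependence would not cancel. With the corrected sign you do get $-2\eta_i a$.

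The real gap is the upgrade from $2\pi i\Bbb Z$ to $4\pi i\Bbb Z$. This is not bookkeeping, and your ``equal winding jumps'' argument is false for an arbitrary segment. The integrand has residue $+1$ at $a$ and $-1$ at $-a$. Translating $\xi_0$ so that $L_i$ sweeps across $a$ but not $-a$ changes $F_i(a)$ by $\pm 2\pi i$. So for a general choice of $L_i$, only the mod-$2\pi i$ statement you proved is true.

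To get $4\pi i$ you must fix symmetric cycles, $-L_i\equiv L_i \pmod\Lambda$; for example take $\xi_0=\tfrac12\omega_2$ for $L_1$ and $\xi_0=\tfrac12\omega_1$ for $L_2$.
\begin{itemize}
\item The map $\xi\mapsto-\xi$ then sends $L_i$ onto itself with reversed orientation. So $a$ and $-a$ cross $L_i$ at the same moment from opposite sides.
\item Their residues are also opposite, so the two jumps are equal and add to $\pm 4\pi i$.
\item Hence $D_i(a):=F_i(a)-2(\omega_i\zeta(a)-\eta_i a)\in 2\pi i\Bbb Z$ is constant on each strip cut out by $L_i+\Lambda$, and changes by elements of $4\pi i\Bbb Z$ across those lines.
\item Finally $D_i=0$ for small $a$. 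Expand $-\zeta(\xi+a)+\zeta(\xi-a)=2a\wp(\xi)+\tfrac13 a^3\wp''(\xi)+\cdots$ uniformly on $L_i$, and use $\int_{L_i}\wp=-\eta_i$ and $\int_{L_i}\wp^{(2k)}=0$ for $k\ge 1$. This gives $F_i(a)=2(\omega_i\zeta(a)-\eta_i a)$ exactly.
\end{itemize}
As you note, this refinement does not affect the ``in particular'' equivalence, which only needs the ambiguity to be purely imaginary.
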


In the simplest case $\rho = 8\pi$ ($n = 1, \ell = 2$), $a_1 = p$, $a_2 = -p$,
$$
f(z) = f(0) \exp \int_0^z \frac{\wp'(p)}{\wp(\xi) - \wp(p)}\,d\xi
$$
leads to a solution if and only if  $F_i(p) \in \sqrt{-1}\,\mathbb{R}$ for $i = 1, 2$ which is then equivalent to the critical point equation $\nabla G(p) = 0$.

\begin{theorem} \cite[Theorem 1.2]{LW} \label{t:unique}
For $\rho = 8\pi$, the mean field equation \eqref{e:mfe1} on a flat torus has at most one solution up
to scaling.
\end{theorem}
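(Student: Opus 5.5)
Up to scaling, any solution for $\rho = 8\pi$ is of type II with $n = 1$. By the discussion above, it corresponds to a pair $\pm p$ with $p \notin E[2]$ and $\nabla G(p) = 0$. The scaling family $e^\lambda f$ gives the same $p$, and conversely $p$ determines $g = \wp'(p)/(\wp(z) - \wp(p))$, hence $f$ up to a multiplicative constant. Since unitarity forces the constant's modulus to be the only free parameter, solutions up to scaling are in bijection with pairs $\pm p$ of nontrivial critical points of $G$. So the theorem reduces to the statement that $G$ has at most one extra pair of critical points, i.e.\ at most 5 critical points in total.

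The plan is to prove this count by a deformation argument in $\tau \in \Bbb H$.

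\textbf{Step 1 (rectangular tori).} By Example \ref{e:rect}, for rectangular tori there are exactly 3 critical points. I will check that each half period is non-degenerate there, using the Hessian of $G$ at $\omega_i/2$ expressed through $\wp(\omega_i/2) = e_i$ and $\eta_i$.

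\textbf{Step 2 (nondegeneracy away from half periods).} I will show that every nontrivial critical point $p$ is non-degenerate. Write $Z(r,s) = \zeta(r\omega_1 + s\omega_2) - r\eta_1 - s\eta_2$ from \eqref{e:Hecke}. The Jacobian of $(r,s) \mapsto Z$ at a zero can be computed via $\partial Z/\partial z = -\wp(z)$ together with the real-linear corrections. Degeneracy becomes an algebraic condition relating $\wp(p)$, $\wp'(p)$ and $Z$. The intended argument is to show that this condition, combined with $Z(p) = 0$, forces $p \in E[2]$. The tool is the one-parameter family $u_\lambda$: its linearized operator must have kernel dimension exactly one (coming from scaling) if $p$ is a nontrivial critical point, and a degenerate $p$ would produce an extra kernel element. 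I expect this to be contradicted via the explicit form of the kernel in terms of $f$.

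\textbf{Step 3 (counting).} With nondegeneracy in hand, the set of critical points can only change as $\tau$ moves through bifurcations at half periods. At such a moment a half period becomes degenerate and sheds a pair $\pm p$ (by the symmetry $G(z) = G(-z)$). The Morse-theoretic Euler characteristic is $\chi(E) = 0$, so the signed count is preserved. The three half periods have index sum determined by their local type, and each new pair contributes two saddles or two extrema.

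The remaining key claim is that at most one half period can be degenerate along any path, and that once a pair exists, a second pair cannot appear. For this I will use the fact that $-4\pi G_z = Z$ is holomorphic in $\tau$ for fixed $(r,s)$. The degeneracy condition at $\omega_i/2$ is a condition of the form $\operatorname{Im}$ of a modular-type quantity such as $\tau - \eta_2/\eta_1 \cdot(\ldots)$ vanishing. This is the main obstacle. I would try to prove, via the $q$-expansion and the maximum principle for the harmonic function $\operatorname{Im}(\omega_2\eta_1 \ldots)$ on a fundamental domain of $\Gamma_0(2)$, that each such degeneracy curve is a single simple curve. The three curves, being permuted by ${\rm SL}_2(\Bbb Z)$, are disjoint, so a crossing adds exactly one pair, and regions containing more than one pair would require two nested crossings, which is excluded.

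Together these give at most five critical points, hence at most one solution up to scaling.
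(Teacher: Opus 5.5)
\textbf{There is a genuine gap: the two steps that carry the theorem are assumed, not proved.}

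Your reduction to ``$G$ has at most five critical points'' is fine; it is \cite[Theorem 1.1]{LW} as recalled above. Step 1 can also be verified. Normalize $\omega_1 = 1$, $\tau = a + bi$. Legendre's relation gives $Z = \zeta(z) - \eta_1 z + \frac{\pi}{b}(z - \bar z)$, hence $G_{z\bar z} = \frac{1}{4b}$ and $G_{zz} = \frac{1}{4\pi}\big(\wp(z) + \eta_1 - \frac{\pi}{b}\big)$. So a critical point $p$ is a non-degenerate minimum, degenerate, or a saddle according as $|\wp(p) + \eta_1 - \pi/b|$ is $<$, $=$ or $>$ $\pi/b$.

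\emph{Step 2 is circular.} The degeneracy condition involves $\eta_1(\tau)$ and $b = {\rm Im}\,\tau$. It is not an algebraic relation in $\wp(p), \wp'(p)$, and nothing combines it with $Z(p) = 0$ to force $p \in E[2]$. Your fallback is that the linearized operator at $u_\lambda$ has only the scaling direction in its kernel. But a non-degeneracy statement of exactly this kind is what the cited proof in \cite{LW} has to establish. It does so for the linearized equations along $\rho \in [4\pi, 8\pi]$, by continuity from the unique solution at $\rho = 4\pi$, with Moser symmetrization and sharp isoperimetric inequalities in singular metrics doing the work. Your ``I expect this to be contradicted'' is where the theorem itself lives.

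\emph{Step 3 does not bound the number of pairs.} The Euler characteristic only gives $\#\{\text{minima}\} - \#\{\text{saddles}\} = -1$ on $E \setminus \{0\}$, which is compatible with any number of extra pairs. Concretely, in the five-point regime all three half periods are saddles. The index count then allows a saddle half period to degenerate, become a minimum and emit a pair of saddles, i.e.\ a second pair, and nothing in your sketch excludes this. Several of your supporting claims also fail:
\begin{itemize}
\item Whether a pair is emitted or absorbed at a degenerate half period depends on the sign of a higher-order (pitchfork) coefficient, which you never compute.
\item Your guess for the shape of the degeneracy loci is right: with $e_k = \wp(\frac12\omega_k)$, the locus for $\frac12\omega_k$ is ${\rm Im}\big(\tau - 2\pi i/(e_k + \eta_1)\big) = 0$. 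But the three loci being permuted by ${\rm SL}(2, \Bbb Z)$ does not make them disjoint in $\Bbb H$.
\item ``At most one half period degenerates along any path'' is false. Along $\tau = t + 2i$, $0 \le t \le 1$, the half period that is a local minimum changes from $\frac12\omega_3$ to $\frac12\omega_2$, so both must degenerate.
\end{itemize}

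The missing ingredient is an a priori bound such as $\#\{\text{minima}\} \le 2$, which together with the index identity gives at most five critical points. It has to come from outside Morse theory:
\begin{itemize}
\item in \cite{LW}, from the PDE non-degeneracy above;
\item in \cite{BE}, from the fact that the minima of $G$ are exactly the attracting fixed points of the anti-meromorphic map $z \mapsto \overline{z + \frac{b}{\pi}(\zeta(z) - \eta_1 z)}$, whose multiplier at $p$ has modulus $|1 - \frac{b}{\pi}(\wp(p) + \eta_1)|$; this map has only two critical points modulo $\Lambda$, and each attracting fixed point must attract one of them;
\item in \cite{CKLW}, by fixing $(r,s)$ and counting zeros of $Z_{r,s}(\tau)$ in $\tau$ via the argument principle on the $\Gamma_0(2)$ domain, as sketched in \S\ref{modular}.
\end{itemize}
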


\begin{corollary} \cite{LW} \label{c:3-5}
The Green function has either 3 or 5 critical points.
\end{corollary}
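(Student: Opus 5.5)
The plan is to deduce Corollary \ref{c:3-5} from Theorem \ref{t:unique}, the correspondence between extra critical points and solutions for $\rho = 8\pi$, and the symmetry of critical points. First, the three half periods $\tfrac12\omega_i$ are always critical points, because $G$ is even. Any further critical point $p \notin E[2]$ comes paired with $-p \neq p$. Hence the number of critical points is $3 + 2k$, where $k$ is the number of extra pairs $\pm p$, and it remains to show $k \le 1$.

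Suppose $\pm p$ is an extra pair. By the existence criterion (Theorem 1.1 of \cite{LW} quoted above), together with the discussion of type II solutions for $n = 1$, the developing map
$$
f_p(z) = \exp \int_0^z \frac{\wp'(p)}{\wp(\xi) - \wp(p)}\,d\xi
$$
satisfies the unitary constraint exactly when $\nabla G(p) = 0$. This follows from the lemma on period integrals, $F_i(p) \in \sqrt{-1}\Bbb R$. So $f_p$ produces a scaling family $u_{\lambda}$ of solutions to \eqref{e:mfe1}. As $\lambda \to \pm\infty$ this family blows up exactly at the zeros and poles of $f_p$, which are $z = p$ and $z = -p$ respectively: $g = f_p'/f_p$ has simple poles at $\pm p$ with residues $\pm 1$.

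Now suppose there were two distinct extra pairs $\pm p$ and $\pm q$, with $\{p,-p\} \neq \{q,-q\}$. They would give scaling families $u^{(p)}_\lambda$ and $u^{(q)}_\lambda$. By Theorem \ref{t:unique} there is at most one solution up to scaling, so these must be the same family up to reparametrizing $\lambda$. Equivalently, the developing maps agree up to the allowed ${\rm PSU}(2)$ action, and after normalization up to $f \mapsto e^{\lambda} f$ or $f \mapsto 1/f$. Then their sets of blow-up points coincide, so $\{p, -p\} = \{q, -q\}$, which is a contradiction. Concretely, $(\log f)'$ is determined up to sign, and its poles are $\{\pm p\}$ and $\{\pm q\}$ respectively. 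Therefore $k \le 1$ and the count is $3$ or $5$.

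The main subtlety is the step asserting that "unique up to scaling" forces identical blow-up sets. The point to check is that the normalized type II developing map is determined by $u$ up to $f \mapsto e^{i\theta} e^{\lambda} f$ and $f \mapsto 1/f$. The only ${\rm PSU}(2)$ elements preserving the diagonal monodromy form \eqref{e:II} are diagonal or anti-diagonal, unless the monodromy is trivial. Trivial monodromy would make $f$ elliptic of degree $1$, which is impossible. With that in place, the poles of $(\log f)'$ are invariants of the scaling family, and the argument closes.
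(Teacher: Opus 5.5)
Your proposal is correct and is essentially the paper's argument. The paper obtains Corollary~\ref{c:3-5} from the uniqueness Theorem~\ref{t:unique}, combined with the correspondence from \cite{LW} between extra pairs $\pm p$ of critical points and scaling families of solutions that blow up exactly at $\pm p$. Your write-up fills in the details the paper leaves implicit. In particular, you show that $\{\pm p\}$, the pole set of $(\log f)'$, is an invariant of the scaling family, because the only ${\rm PSU}(2)$ elements preserving a non-trivial diagonal monodromy are diagonal or anti-diagonal.
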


The first known proof of Corollary \ref{c:3-5} is by way of the uniqueness theorem in Theorem \ref{t:unique} which uses techniques in non-linear analysis including the Moser symmetrization and sharp isoperimetric inequalities. Recently, two elementary proofs were found together with more precise understanding on the distribution of critical points when the moduli point $\tau$ varies. 

Our approach uses \emph{pre-modular forms} \cite{CKLW}, and another approach uses \emph{anti-holomorphic dynamics} \cite{BE}. The former will be presented in the next section.  

\subsection{Geometry of critical points over $\mathcal{M}_{1, 1}$}

\begin{theorem}[Moduli dependence] \cite[Theorem 1.2]{CKLW} \label{t:moduli}{\ }
\begin{itemize}
\item[(1)] Let $\Omega_3 \subset \mathcal{M}_{1, 1}\cup \{\infty\}\cong
S^2$ (resp.\ $\Omega_5$) be the set of tori with 3 (resp.\ 5)
critical points, then $\Omega_3 \cup\{\infty\}$ is closed
containing $\sqrt{-1} \mathbb{R}$, $\Omega_5$ is open containing the
vertical line $[e^{\pi i/3}, \sqrt{-1} \infty)$. \smallskip

\item[(2)] Both $\Omega_3$ and $\Omega_5$ are simply connected
with $C := \p\Omega_3 = \p\Omega_5$ being homeomorphic to $S^1$
containing $\infty$. \smallskip

\item[(3)] Moreover, the extra critical points are split out from
some degenerate half period point when the tori move from $\Omega_3$ to $\Omega_5$ across $C$. \smallskip

\item[(4)] (Strong uniqueness) The map $\Omega_5 \to [0, 1]^2$ by $\tau \mapsto (r, s)$ for $p(\tau) = r\omega_1 + s\omega_2$ is a bijection onto 
$$
\triangle = [(\tfrac{1}{3}, \tfrac{1}{3}), (\tfrac{1}{2}, \tfrac{1}{2}), (0, \tfrac{1}{2})].
$$
\end{itemize}
\end{theorem}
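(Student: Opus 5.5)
The plan is to treat Theorem \ref{t:moduli} as a statement about the zero locus of a pre-modular form in the pair $(\sigma,\tau)$. By \eqref{e:Hecke}, a non-half-period critical point $p = r\omega_1 + s\omega_2$ of $G$ on $E_\tau$ is a zero of the Hecke function
$$Z(r,s;\tau) = \zeta(r + s\tau;\tau) - r\eta_1(\tau) - s\eta_2(\tau).$$
For fixed real $(r,s) \notin \tfrac12\Bbb Z^2$, this is a modular form of weight one with respect to $\Gamma(N)$ whenever $(r,s)$ is $N$-torsion. The first step is to fix $(r,s)$ in the fundamental triangle $\triangle$ and study the holomorphic function $\tau \mapsto Z(r,s;\tau)$ on $\Bbb H$.

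I would count its zeros inside a fundamental domain of $\Gamma_0(2)$ using the valence formula, combined with the $q$-expansion $Z = 2\pi i\big(\tfrac{1}{2} - \ldots\big)$ at the cusps. The target is to show that $Z(r,s;\cdot)$ has \emph{exactly one} zero $\tau(r,s)$ in $F_0 = \{\tau \in \Bbb H : 0 \le \mathrm{Re}\,\tau \le 1, |\tau - \tfrac12| \ge \tfrac12\}$ when $(r,s)$ lies in the interior of $\triangle$, and none when $(r,s)$ lies outside it. Uniqueness up to $\pm$ and the $\mathrm{SL}_2(\Bbb Z)$-action then give Corollary \ref{c:3-5} again, and they give the bijection in (4) once we check that $\tau(r,s)$ depends continuously, indeed real-analytically, on $(r,s)$. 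Continuity follows from the implicit function theorem, once the zero is known to be simple. Simplicity would come from the same zero count, since the multiplicities must add up to $1$.

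The map $(r,s) \mapsto \tau(r,s)$ is therefore a homeomorphism from $\triangle$ minus its boundary edges onto its image. I would identify that image as $\Omega_5$. The point $(\tfrac13,\tfrac13)$ corresponds to $\tau = e^{\pi i/3}$, where the $\Bbb Z_3$-symmetry example applies. As $(r,s)$ approaches the edges of $\triangle$ that contain a half period, $p(\tau)$ degenerates to that half period, which gives (3). That edge then maps to the boundary curve $C$, along which a half period is a degenerate critical point, that is, the Hessian of $G$ vanishes there. The curve $C$ is parametrized by a boundary arc of $\triangle$, so it is homeomorphic to $S^1$ through $\infty$, and $\Omega_5$ is simply connected as the image of a convex set. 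This gives (2). Part (1) then follows. Rectangular tori lie in $\Omega_3$ by Example \ref{e:rect}. The line $[e^{\pi i/3}, i\infty)$ is the image of the edge $s = \tfrac12 - \ldots$, more precisely the symmetric line $r = s$ continued, and I would check this by symmetry under $\tau \mapsto -\bar\tau + 1$.

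The main obstacle is the precise zero count for $Z(r,s;\tau)$ in $\tau$ over the domain $F_0$. The valence formula controls only zeros in the interior. The boundary contributions require careful asymptotics of $Z$ at the cusps $0, 1, \infty$, which depend on $(r,s)$ in a piecewise way, and the sign of the leading coefficient changes exactly across the lines bounding $\triangle$. I would first handle rational $(r,s)$, where $Z$ is a genuine modular form, with an argument-principle computation along the boundary of $F_0$ truncated at height $\mathrm{Im}\,\tau = T$. I would then extend to all real $(r,s)$ by continuity in $(r,s)$ and a nonvanishing check on the boundary of $F_0$.
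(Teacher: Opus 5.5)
The gap is at the step you yourself flag as the main obstacle. The remedy you propose, a valence formula or argument principle on the truncated boundary of $F$ (your $F_0$) for each rational $(r,s)$, does not work. For $(r,s)\in\tfrac1N\mathbb{Z}^2$ the function $Z_{r,s}$ is modular only for $\Gamma(N)$, whereas the side pairings of the $\Gamma_0(2)$-domain $F$ move the torsion point: $Z_{r,s}(\tau+1)=Z_{r+s,s}(\tau)$ and $Z_{r,s}(-1/\tau)=\tau Z_{-s,r}(\tau)$. So along $\partial F$ the contributions of paired sides do not cancel, and the change of argument is not determined by the cusp data. The $\Gamma(N)$ valence formula only gives the total number of zeros in $F$ of all transforms $Z_{(r,s)\gamma}$, which is Hecke's heuristic $\Psi(N)/24\approx$ area, not the count for a single $(r,s)$.

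The paper makes the deformation in $(r,s)$ the main tool rather than an afterthought. First, $Z_{r,s}\neq0$ on $\partial F$ because those tori are rectangular. Second, the cusp asymptotics $Z_{r,s}\to2\pi i(s-\tfrac12)$ at $\infty$, $Z_{r,s}=\tfrac{2\pi i}{\tau}(\tfrac12-r+o(1))$ at $0$, and $Z_{r,s}=\tfrac{2\pi i}{\tau-1}(\tfrac12-r-s+o(1))$ at $1$ show that zeros can enter or leave $F$ only across $s=\tfrac12$, $r=\tfrac12$, $r+s=\tfrac12$. Hence the number of zeros in $F$ is constant on each region these lines cut out of $[0,1]\times[0,\tfrac12]$, and what is needed is one exact evaluation per region. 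This is the idea missing from your plan.

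For $\{r+s<\tfrac12\}$ and $\{r>\tfrac12\}$, the evaluation is the addition-law computation showing that $Z_{1/6,1/6}$ and $Z_{3/4,1/4}$ never vanish. For the triangle $\{0<r,s<\tfrac12,\ r+s>\tfrac12\}$, it is the valence formula for the \emph{level one} weight-$8$ form $Z_3=\prod Z_{3,k_1,k_2}$. The relation $\nu_\infty+\tfrac12\nu_i+\tfrac13\nu_\rho+\sum\nu_p=\tfrac{8}{12}$, together with $Z_{1/3,1/3}(\rho)=0=Z_{2/3,2/3}(\rho)$, forces $\nu_\rho(Z_3)=2$ and no other zeros. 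So $\rho=e^{\pi i/3}$ is the unique zero of $Z_{1/3,1/3}$ in $F$, and it is simple. Your remark that $(\tfrac13,\tfrac13)\mapsto\rho$ by the $\mathbb{Z}_3$-symmetry gives only existence, and ``the multiplicities add up to $1$'' presupposes exactly this count.

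Your derivation of (2) and (3) also misidentifies the boundary behaviour. By the same cusp asymptotics, when $(r,s)$ tends to an interior point of an edge of that triangle, $\tau(r,s)$ escapes to the cusp $\infty$, $0$ or $1$. In $\mathcal{M}_{1,1}\cup\{\infty\}$ all three cusps become the single point $\infty$, so $\tau(r,s)$ does not tend to $C$ there. The part $C\setminus\{\infty\}$, where a half period is degenerate, is reached when $(r,s)$ tends to a \emph{vertex} $(0,\tfrac12)$, $(\tfrac12,0)$ or $(\tfrac12,\tfrac12)$, i.e.\ when $p(\tau)$ tends to a half period. So $C$ is not the image of a boundary arc, and (3) has to be argued at the vertices.

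Likewise $[e^{\pi i/3},i\infty)$ is not the image of an edge or of the line $r=s$. The reflection $\tau\mapsto1-\bar\tau$ preserves $F$ and acts by $(r,s)\mapsto(1-r-s,s)$. By uniqueness, the line $\mathrm{Re}\,\tau=\tfrac12$ therefore corresponds to the median $2r+s=1$, and $[\rho,i\infty)$ is the segment from $(\tfrac13,\tfrac13)$ to $(\tfrac14,\tfrac12)$.
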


\begin{figure}
\renewcommand{\figurename}{Figure}
\begin{center}
\includegraphics[width=0.8\textwidth]{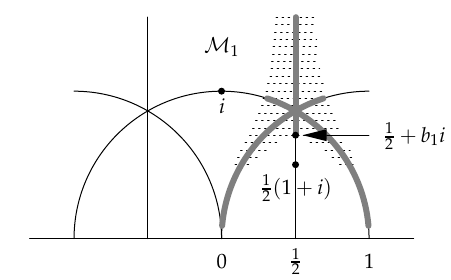}
\end{center}
\caption{$\Omega_5$ contains a neighborhood of $e^{\pi i/3}$.}
\end{figure}

On the line ${\rm Re}\, \tau = 1/2$ which are equivalent to the rhombuses tori, the proof was first achieved in \cite[Theorem 1.6]{LW} by \emph{functional equations} of $\T_1$. The general case uses the theory of \emph{pre-modular forms} which is based on Hecke's \emph{modular forms of weight one} \cite{Hecke}. I will start with a sketch of the idea. 

\begin{figure} 
\renewcommand{\figurename}{Figure}
\begin{center}
\includegraphics[width=0.7\textwidth]{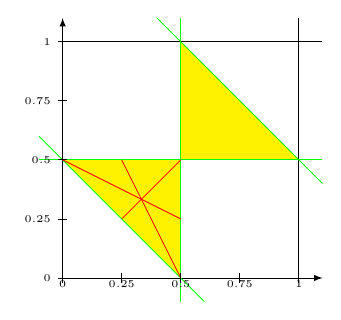}
\end{center}
\caption{The triangle $\triangle$ which is in bijection with $\Omega_5$.}
\end{figure}

Consider Hecke's weight one modular function for $\Gamma(N)$:
\begin{equation*}
\begin{split}
Z_{N, k_1, k_2}(\tau) &:= \zeta\Big( \frac{k_1 \omega_1 + k_2
\omega_2}{N}; \tau\Big) -  \frac{k_1 \eta_1 + k_2 \eta_2}{N}\\
&= -Z_{N, N - k_1, N - k_2}(\tau).
\end{split}
\end{equation*}
Define the analogue of the Euler $\phi$ function by
$$
\Psi(N) := \#\{\,(k_1, k_2)\mid (N, k_1, k_2) = 1, 0 \le k_i \le N
- 1\,\},
$$
and then we have the weight $\Psi(N)$ version for the full modular group:
$$
Z_N(\tau) := \prod_{(N, k_1, k_2) = 1} Z_{N, k_1,
k_2}(\tau) \in M_{\Psi(N)}({\rm SL(2, \Bbb Z)}).
$$

For each $\tau \in \Bbb H$ with $Z_N(\tau) = 0$, it is a double zero (at least generically). Also for odd $N \ge 5$, it is clear that $\nu_i(Z_N) = \nu_\rho(Z_N) = 0$. At $\tau = \infty$, Hecke calculated the asymptotic expansion of $Z_N$ and get 
$$
\nu_\infty(Z_N) = \phi(N/2) = 0.
$$
Then the degree formula for modular forms says that
$$
(Z_N)_{\rm red} = \frac{1}{2} \deg Z_N = \frac{1}{2} \sum_p \nu_p(Z_N) =
\frac{\Psi(N)}{24}.
$$
Let $N$ be a large prime number, then $\Psi(N)/24 = (N^2 - 1)/24$ is close to the area of the triangle (see Figure 2) 
$$
\triangle := [(\tfrac{1}{3}, \tfrac{1}{3}),
(\tfrac{1}{2}, \tfrac{1}{2}), (0, \tfrac{1}{2})].
$$ 
This suggests strongly an 1-1 correspondence between $\Omega_5$ and $\triangle$ under the map $\tau \mapsto (r, s)$ where $p(\tau) = r\omega_1 + s\omega_2$.

The actual proof uses deformations in $r, s \not \in \tfrac{1}{2} \Bbb Z$ to make the above idea rigorous. Let $F \subset \Bbb H$ be the fundamental domain for $\Gamma_0(2)$ defined by 
$$
F := \{\tau \in \Bbb H \mid 0 \le {\rm Re}\, \tau \le 1, \, |\tau - \tfrac{1}{2}| \ge \tfrac{1}{2} \}.
$$

\begin{figure}
\renewcommand{\figurename}{Figure}
\begin{center}
\includegraphics[width=\textwidth]{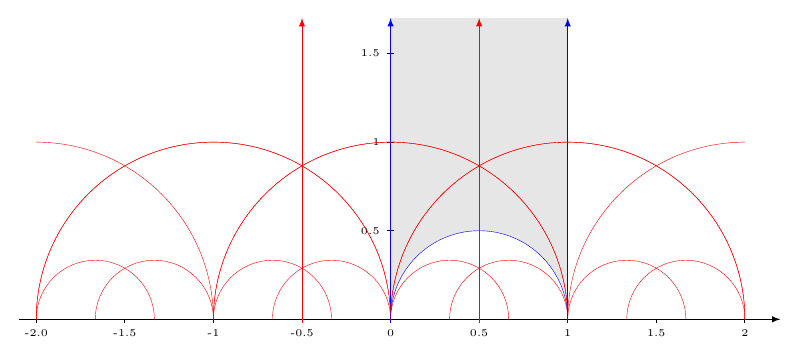}
\end{center}
\caption{The fundamental domain for $\Gamma_0(2)$.}
\end{figure}
We analyze solutions $\tau \in F$ for $Z_{r, s}(\tau) = 0$ by varying $(r, s)$.

For $\tau \in \p F$, $E$ is a rectangular torus and the only critical points of $G$ are half periods (cf.~Example \ref{e:rect}). So $Z_{r, s}(\tau) \ne 0$ for $\tau \in \p F$. 

Based on this, we use of the argument principle along the curve $\p F$ to analyze the number of zeros of $Z_{r, s}$ in $F$. Using Jacobi's triple product formula \cite[\S 21.41]{Whittaker} we may deduce that
\begin{equation*} \label{q-Z}
\begin{split}
Z_{r, s}(\tau) &= 2\pi i (s - \tfrac{1}{2}) - \pi i \frac{2 e^{2\pi i z}}{1 - e^{2\pi i z}} \\
&\qquad - 2\pi i \sum_{n = 1}^\infty \left( \frac{e^{2\pi i z} q^n}{1 - e^{2\pi i z} q^n} - \frac{e^{-2\pi i z} q^n}{1 - e^{-2\pi i z} q^n} \right),
\end{split}
\end{equation*}
where $z = r + s \tau$. From here, it follows easily that

\begin{lemma} [Asymptotic behavior of $Z_{r, s}$ on cusps] \cite[(5.6), (5.8)]{CKLW}\label{l:cusp} {\ }

We have $Z_{t, s}(-1/\tau) = \tau Z_{-s, t}(\tau)$, and for $t \in (0, 1)$,
\begin{equation*} \label{t=1/2}
Z_{r, s}(\tau) = \frac{-1}{\tau} Z_{-s, r}(-1/\tau) = \frac{2\pi i}{\tau} \big( \tfrac{1}{2} - r + o(1) \big)
\end{equation*}
as $\tau \to 0$. Similarly, $Z_{r, s}(\tau + 1) = Z_{r + s, r}(\tau)$, and for $r + s \in (0, 1)$,
\begin{equation*} 
Z_{r, s}(\tau) = Z_{r + s, s}(\tau - 1) = \frac{2\pi i}{\tau - 1} \big( \tfrac{1}{2} - (r + s) + o(1) \big).
\end{equation*}
\end{lemma}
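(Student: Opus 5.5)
The plan is to get both transformation rules from the homogeneity of the Weierstrass functions, and then read off the cusp asymptotics from the $q$-expansion of $Z_{r,s}$ displayed just before the lemma.

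\emph{Step 1: transformation rules.} Write $\Lambda_\tau = \Bbb Z + \Bbb Z\tau$ with basis $(\omega_1, \omega_2) = (1, \tau)$, so that
$$
Z_{r,s}(\tau) = \zeta(r + s\tau; \Lambda_\tau) - r\eta_1(\tau) - s\eta_2(\tau).
$$
Homogeneity gives $\zeta(\lambda z; \lambda\Lambda) = \lambda^{-1}\zeta(z; \Lambda)$, and the quasi-periods attached to a basis scale in the same way.

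For $S: \tau \mapsto -1/\tau$ we have $\Lambda_{-1/\tau} = \tau^{-1}\Lambda_\tau$, and the basis $(1, -1/\tau)$ equals $\tau^{-1}(\tau, -1)$. Hence the quasi-periods of $(1, -1/\tau)$ are $(\tau\eta_2(\tau), -\tau\eta_1(\tau))$. The point $r + s(-1/\tau)$ equals $\tau^{-1}(-s + r\tau)$. Substituting gives
$$
Z_{r,s}(-1/\tau) = \tau\big(\zeta(-s + r\tau) + s\eta_1 - r\eta_2\big) = \tau Z_{-s,r}(\tau),
$$
which is the first identity.

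For $T: \tau \mapsto \tau + 1$ the lattice is unchanged. The basis becomes $(1, \tau + 1)$, with quasi-periods $(\eta_1, \eta_1 + \eta_2)$, and the point is $r + s(\tau + 1) = (r + s) + s\tau$. This gives $Z_{r,s}(\tau + 1) = Z_{r+s,s}(\tau)$. I read the index in the statement as this; it must be consistent with the second asymptotic formula, which uses $Z_{r+s,s}(\tau - 1)$.

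\emph{Step 2: asymptotics at $i\infty$.} Use the $q$-expansion with $z = r' + s'\tau$, where $|e^{2\pi i z}| = |q|^{2s'}$. Take $0 < s' < 1$ and $n \ge 1$. Then $e^{2\pi i z} \to 0$, $e^{2\pi i z}q^n \to 0$, and $|e^{-2\pi i z}q^n| = |q|^{n - 2s'} \to 0$ as $\operatorname{Im}\tau \to \infty$, with uniform geometric control of the tail. Hence
$$
Z_{r',s'}(\tau) = 2\pi i\big(s' - \tfrac{1}{2}\big) + o(1).
$$

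\emph{Step 3: transfer to the cusps.} For the cusp $0$, set $\tau' = -1/\tau$, so $\tau' \to i\infty$ as $\tau \to 0$ in $F$. Applying Step 1 with $\tau'$ in place of $\tau$ gives $Z_{r,s}(\tau) = \frac{-1}{\tau}Z_{-s,r}(\tau')$. Step 2, applied with $s' = r \in (0,1)$, yields $\frac{2\pi i}{\tau}\big(\tfrac{1}{2} - r + o(1)\big)$. For the cusp $1$, first write $Z_{r,s}(\tau) = Z_{r+s,s}(\tau - 1)$ using $T$. Then apply the $S$-rule at $\tau - 1 \to 0$; the role of $r$ is now played by $r + s \in (0,1)$.

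\emph{Main obstacle.} I expect the main difficulty to be bookkeeping: tracking the bases, the quasi-period signs (which rely on Legendre's relation), and the index conventions, so that the signs in front of $\tfrac{1}{2} - r$ and the order of the indices match the statement exactly. A secondary point is the convergence of the series in the expansion for $n - 2s' > 0$. This holds for all $n \ge 1$ only if $q$ is interpreted as $e^{2\pi i\tau}$ in the sums, or if the exponents are read accordingly. I would check this convention first.
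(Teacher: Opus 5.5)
Your proposal is correct and follows essentially the route the paper has in mind: obtain the $S$- and $T$-transformation rules (your homogeneity computation supplies this, and it needs no Legendre relation), read off $Z_{r',s'}(\tau) = 2\pi i(s' - \tfrac{1}{2}) + o(1)$ at $i\infty$ from the $q$-expansion, and transport this to the cusps $0$ and $1$. You are also right on both conventions: the $T$-rule should read $Z_{r,s}(\tau+1) = Z_{r+s,s}(\tau)$, and the $q^n$ in the expansion must be read as $e^{2\pi i n\tau}$ (i.e.\ $q^{2n}$ with the paper's $q = e^{\pi i \tau}$), which is what makes the estimate valid for all $s' \in (0,1)$.
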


\begin{lemma} [Non-Vanishing] \cite[Lemma 5.2]{CKLW} \label{no-sol} {\ }

For any $\tau \in \Bbb H$,
\begin{itemize}
\item[(i)] $\zeta(\tfrac{3}{4}\omega_1 + \tfrac{1}{4} \omega_2)) \ne \tfrac{3}{4} \eta_1 + \tfrac{1}{4} \eta_2$.

\item[(ii)] $\zeta(\tfrac{1}{6}\omega_1 + \tfrac{1}{6} \omega_2)) \ne \tfrac{1}{6} \eta_1 + \tfrac{1}{6} \eta_2$.

\end{itemize}
\end{lemma}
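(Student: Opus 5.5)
The plan is to use Hecke's modular-form construction recalled above, with levels $N=4$ and $N=6$. Set $Z_{N,k_1,k_2}(\tau)=\zeta\big((k_1\omega_1+k_2\omega_2)/N;\tau\big)-(k_1\eta_1+k_2\eta_2)/N$. By \eqref{e:Hecke}, $Z_{4,3,1}(\tau)=0$ means exactly that $\tfrac34\omega_1+\tfrac14\omega_2$ is a critical point of $G$ on $E_\tau$. Likewise $Z_{6,1,1}(\tau)=0$ means that $\tfrac16\omega_3$ is a critical point of $G$.

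\textbf{Step 1 (reduction to a modular form).} By Lemma \ref{l:cusp}, $Z_{r,s}(\gamma\tau)$ equals $Z_{(r,s)\gamma}(\tau)$ times an automorphy factor. Hence the full product $Z_N=\prod_{(N,k_1,k_2)=1}Z_{N,k_1,k_2}$ is a modular form of weight $\Psi(N)$ for ${\rm SL}(2,\Bbb Z)$, with $\Psi(4)=12$ and $\Psi(6)=24$. The points $(3/4,1/4)$ and $(1/6,1/6)$ are primitive of level $4$ and $6$ respectively. So it suffices to show that $Z_N$ has no zero in $\Bbb H$: a zero of one factor at $\tau_0$ would make $Z_N(\tau_0)=0$. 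Since $Z_{N,k}=-Z_{N,-k}$, the factors pair up and
$$
Z_N=\pm h_N^2,
$$
where $h_N$ is the product over one representative of each pair $\pm k$. Thus $h_N$ is a modular form of weight $6$ (for $N=4$) or $12$ (for $N=6$), possibly with a character that disappears after squaring.

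\textbf{Step 2 (order at the cusp).} Next I compute $\nu_\infty(h_N)$ factor by factor from the $q$-expansion of $Z_{r,s}$ displayed before Lemma \ref{l:cusp}. If $s\notin\{0,\tfrac12\}$ mod $1$, the constant term $2\pi i(s-\tfrac12)+\dots$ is nonzero, so the factor has order $0$. For $s=0$ the constant term is $2\pi i(-\tfrac12)-\pi i\,\tfrac{2e^{2\pi i r}}{1-e^{2\pi i r}}=-\pi i\,\tfrac{1+e^{2\pi ir}}{1-e^{2\pi ir}}$, which is nonzero unless $r=\tfrac12$. For $s=\tfrac12$ one must track the $n=1$ term of the series, since $e^{-2\pi iz}q$ is of size $O(1)$; there the genuine leading power of $q$ has to be extracted. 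This is the ``$\nu_\infty(Z_N)$'' computation attributed to Hecke above.

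\textbf{Step 3 (valence formula).} The valence formula gives
$$
\nu_\infty(h_N)+\tfrac12\nu_i(h_N)+\tfrac13\nu_\rho(h_N)+\sum_{\text{other }p}\nu_p(h_N)=\tfrac{k}{12},
$$
with $k=6$ or $k=12$ the weight of $h_N$. At $\tau=i$ the torus is rectangular, so by Example \ref{e:rect} only half periods are critical and no factor vanishes. At $\tau=\rho$ the critical points are the half periods and $\pm\tfrac13\omega_3$, by the $\Bbb Z_3$-symmetry example together with Corollary \ref{c:3-5}. None of these has exact order $4$ or $6$, so $\nu_\rho=0$. Hence $\sum_{p}\nu_p(h_N)=k/12-\nu_\infty(h_N)$, and the lemma follows once Step 2 yields $\nu_\infty(h_N)=k/12$, i.e.\ $\nu_\infty(h_4)=\tfrac12$ and $\nu_\infty(h_6)=1$. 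Fractional orders at the cusp are possible here because $h_N$ may carry a character and $q=e^{\pi i\tau}$ is the natural parameter.

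\textbf{Main obstacle.} The main obstacle is Step 2: the exact cusp order of the whole product, not just whether constant terms vanish. I would organize the primitive residues $(k_1,k_2)$ mod $N$ by $k_2$ mod $N$, use the symmetry $Z_{r,s}(\tau+1)=Z_{r+s,s}(\tau)$ from Lemma \ref{l:cusp} to group the factors with $s=\tfrac12$, and expand those to first nontrivial order in $q$. If the count comes out short, the fallback is a direct argument-principle computation. In that approach one applies the argument principle to the single factor $Z_{r,s}$ along $\partial F$, where $F$ is the fundamental domain of $\Gamma_0(2)$, using the cusp asymptotics of Lemma \ref{l:cusp} and the nonvanishing on $\partial F$, which holds because $\partial F$ consists of rectangular tori.
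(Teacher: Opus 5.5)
Your route differs from the paper's, and it does work. However, the decisive Step~2 is left undone, and your remark that $e^{-2\pi i z}q$ is $O(1)$ comes from reading the displayed expansion literally. With $q=e^{\pi i\tau}$ that series must run over $q^{2n}$. Read literally, the $n=1$ term would give a nonzero constant, and you would get $\nu_\infty=0$, the opposite of what you need.

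With the correct normalization, take $s=\tfrac12$ and $x=e^{2\pi i z}=e^{2\pi i r}q$. Then $x/(1-x)=e^{2\pi i r}q+O(q^2)$ and $x^{-1}q^2/(1-x^{-1}q^2)=e^{-2\pi i r}q+O(q^2)$, so
\[
Z_{r,\frac12}(\tau)=4\pi\sin(2\pi r)\,e^{\pi i\tau}+O(e^{2\pi i\tau}).
\]
This has order exactly $\tfrac12$ at $\infty$ for $r\notin\{0,\tfrac12\}$. There are $2\phi(N/2)$ primitive points with $s=\tfrac12$, and, as you observed, every other factor has nonzero constant term. Hence $\nu_\infty(Z_N)=\phi(N/2)$, which gives $1=\Psi(4)/12$ and $2=\Psi(6)/12$. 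Since the other orders in the valence formula are nonnegative, $Z_4$ and $Z_6$ have no zeros in $\Bbb H$. Your separate discussion of $\tau=i,\rho$ (which would import Corollary~\ref{c:3-5}) and the argument-principle fallback are therefore unnecessary.

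The paper instead works pointwise with the addition law $\frac{\wp'(z)}{\wp(z)-\wp(u)}=\zeta(z+u)+\zeta(z-u)-2\zeta(z)$. It chooses $u$ so that each of $z\pm u$ is a half period or $-z$ modulo $\Lambda$. The linear parts $r\eta_1+s\eta_2$ then cancel, $Z$ vanishes at half periods, and the right side collapses to a multiple of $Z(z)$:
\begin{itemize}
\item for (ii), $z=\tfrac16\omega_3$ and $u=\tfrac13\omega_3$ give $-3Z(z)$;
\item for (i), e.g.\ $z=\tfrac34\omega_1+\tfrac14\omega_2$ and $u=\tfrac14\omega_1+\tfrac14\omega_2$ give $-2Z(z)$.
\end{itemize}
The left side is finite and nonzero because $z\notin\tfrac12\Lambda$ and $z\not\equiv\pm u$.

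That proof is one line and needs neither modularity nor cusp expansions. Yours needs both, but it explains why torsion orders $4$ and $6$ are special. Among $N\ge 3$, these are exactly the levels where Hecke's cusp order $\phi(N/2)$ exhausts $\Psi(N)/12$. This is the same valence-formula bookkeeping the paper applies next to $Z_3$.
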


This follows from the addition law. For example, for (ii) we choose $z = \tfrac{1}{6}(\omega_1 + \omega_2) = \tfrac{1}{6} \omega_3$ and $u = \tfrac{1}{3} \omega_3$. Then
\begin{equation*}
\begin{split}
0 &\ne \frac{\wp'(z)}{\wp(z) - \wp(u)} = \zeta(\tfrac{1}{2} \omega_3) + \zeta(-\tfrac{1}{6}\omega_3) - 2\zeta(\tfrac{1}{6} \omega_3) \\
&= \tfrac{1}{2} \eta_1 + \tfrac{1}{2} \eta_2 - 3 \zeta(\tfrac{1}{6} \omega_3) = -3 (\zeta(\tfrac{1}{6} \omega_1 + \tfrac{1}{6}\omega_2) - \tfrac{1}{6}\eta_1 - \tfrac{1}{6} \eta_2).
\end{split}
\end{equation*}

It remains to show that if
$$
(r, s) \in [0, 1] \times [0, \tfrac{1}{2}]\backslash \{(0, 0), (\tfrac{1}{2}, 0), (0, \tfrac{1}{2}), (\tfrac{1}{2}, \tfrac{1}{2})\}
$$
then $Z_{r, s}(\tau) = 0$ has a solution $\tau \in \Bbb H$ if and only if that 
$$
(r, s) \in \triangle := \{(r, s) \mid 0 < r, s < \tfrac{1}{2},\, r + s > \tfrac{1}{2} \}.
$$ 
Moreover, the solution $\tau \in F$ is unique for any $(r, s) \in \triangle$.

For the proof, notice that the cases $(t, s) \not\in \triangle$ are excluded by Lemma \ref{l:cusp} and Lemma \ref{no-sol}. From
$$
\nu_{\infty}(Z_{3}) + \frac{1}{2} \nu_i(Z_{3}) + \frac{1}{3} \nu_\rho(Z_{3}) + \sum_{p \ne \infty, i, \rho} \nu_p(Z_{3}) = \frac{8}{12},
$$  
we see that $Z_{\frac{1}{3}, \frac{1}{3}}(\rho) = Z_{\frac{2}{3}, \frac{2}{3}}(\rho) = 0$ implies $\nu_\rho(Z_{(3)}) = 2$ and all other terms $= 0$. Thus $\tau = \rho$ is a simple root to $Z_{\frac{1}{3}, \frac{1}{3}}(\tau) = 0$ and such a property holds for all $(r, s) \in \triangle$ by Lemma \ref{l:cusp}. 

This completes the sketch of proof of Theorem \ref{t:moduli}.

\section{Correspondence $\Bbb P^1 \leftarrow \overline X_n \to E$ and pre-modular forms $Z_n$} \label{s:HE}

We continue the discussion on type II solutions for general $n \in \Bbb N$:

\begin{theorem} [Periods integrals and type II solutions] \cite[Theorem 0.6]{CLW} {\ }

Consider the mean field equation $\triangle u + e^u = \rho\, \delta_0$ on $E = \Bbb C/\Lambda$.
\begin{itemize}
\item[(i)] If solutions exist for $\rho = 8n \pi$, then there is a unique
even solution within each type II scaling family. ($\ell = 2n$, $a_{n + i} = -a_i$ for $i = 1, \ldots, n$.) The solution $u$ is determined by the zeros $a_1,
\ldots, a_n$ of $f$ through the recipe
\begin{equation*}
\begin{split}
g(z) &= \sum_{i = 1}^n \frac{\wp'(a_i)}{\wp(z) - \wp(a_i)}, \\ 
f(z) &= f(0) \exp \int^z g(\xi)\, d\xi.
\end{split}
\end{equation*}

\item[(ii)] The concentration of the zero of $g$ at $z \in \Lambda$: 
$$
{\rm ord}_{z = 0}\, g(z) = 2n
$$ 
leads to $n - 1$ algebraic equations for $a = \{a_1, \ldots, a_n\}$.

\item[(iii)] The $n$-th equation is given by $\int_{L_i} g \in \sqrt{-1}\Bbb R$, which is equivalent to 
$$
\sum_{i = 1}^n \nabla G(a_i) = 0.
$$
\end{itemize}
\end{theorem}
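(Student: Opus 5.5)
Since the statement is cited from \cite[Theorem 0.6]{CLW}, I will reconstruct the argument from the structure developed in \S\ref{ss:type-II}, in three steps.

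\emph{Step 1: reduction to an even solution in each scaling family.} Start from a type II solution $u$ with normalized developing map $f$ satisfying \eqref{e:II}. Then $g=(\log f)'$ is elliptic on $E$, its only zero is at $z=0$, and ${\rm ord}_{z=0}\, g=2n$. The poles of $g$ are simple, with residues $\pm1$, at the simple zeros and poles of $f$; by the local constraint (1) these lie off $\Lambda$.
\begin{itemize}
\item The vanishing of the odd derivatives $g'(0)=g'''(0)=\cdots=g^{(2n-1)}(0)=0$ forces $g$ to be even. I will quote the symmetric function argument of \cite[Theorem 5.2]{CLW} for this.
\item Once $g$ is even, $f(z)f(-z)$ is constant. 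Choose the scaling parameter $\lambda$ so that $e^{\lambda}f$ satisfies $f(-z)=1/f(z)$ up to a unimodular constant; this yields the even solution $u(-z)=u(z)$.
\item Uniqueness within the family: $u_\lambda(-z)$ corresponds to $e^{-\lambda}$ times the reciprocal developing map, so $u_\lambda$ is even for exactly one $\lambda$.
\end{itemize}
Evenness of $g$ with simple poles at $\pm a_i$ then gives the partial fraction form $g=\sum_i \wp'(a_i)/(\wp(z)-\wp(a_i))$. The residues have the correct sign $\pm1$ at $\pm a_i$, and $g(0)=0$ holds automatically.

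\emph{Step 2: the $n-1$ algebraic equations.} Expand each summand around $z=0$ using $1/(\wp-x_i)=\wp^{-1}\sum_k x_i^k\wp^{-k}$, where $\wp^{-1}=z^2-\cdots$. The order condition ${\rm ord}_0\, g=2n$ means that the coefficients of $z^2,\dots,z^{2n-2}$ vanish. Rewritten in terms of powers of $1/\wp$, this becomes the vanishing of the power sums $\sum_i y_i x_i^k$ for $0\le k\le n-2$, which is exactly \eqref{e:Xn}. The change of variables is triangular, since $\wp^{-k}=z^{2k}(1+O(z^2))$, so the two sets of conditions are equivalent.

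\emph{Step 3: the $n$-th equation.} The developing map $f=f(0)\exp\int^z g$ satisfies the type II condition \eqref{e:II} precisely when the periods $\int_{L_i} g$ lie in $\sqrt{-1}\,\Bbb R$. By the period lemma \cite[Proposition 2.3]{LW2}, these periods equal $\sum_j F_i(a_j)$. Set $W:=\sum_j(\zeta(a_j)-r_j\eta_1-s_j\eta_2)$. Modulo $4\pi i\Bbb Z$ we then have $\sum_j F_1(a_j)=2W\omega_1-4\pi i\sum s_j$ and $\sum_j F_2(a_j)=2W\omega_2+4\pi i\sum r_j$. Requiring both to be imaginary with $\omega_1=1$ gives ${\rm Re}\,W=0$ and ${\rm Re}(W\tau)=0$. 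Since ${\rm Im}\,\tau>0$, this forces $W=0$. By \eqref{e:Hecke}, $W=0$ is equivalent to $\sum_j\nabla G(a_j)=0$. Conversely, if $W=0$ the periods lie in $\sqrt{-1}\,\Bbb R$, so $f$ has unitary multipliers and defines a solution.

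\emph{Main obstacle.} The one genuinely hard step is the evenness of $g$ in Step 1. It requires showing that the symmetric system in the zeros and poles of $f$ coming from the odd-derivative conditions has only symmetric solutions. I would take this from \cite{CLW} rather than reprove it. Everything else is bookkeeping with Weierstrass functions.
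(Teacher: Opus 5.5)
Your proposal is correct and follows the same route as the paper's sketch in \S\ref{ss:type-II} and \S\ref{ss:LL}: evenness of $g$ quoted from \cite[Theorem 5.2]{CLW}, the partial fraction form, expanding $g$ in powers of $1/\wp$ to get $\sum_j y_j x_j^k = 0$ for $0 \le k \le n-2$, and the period lemma \cite[Proposition 2.3]{LW2} turning the unitarity condition into $\sum_j \nabla G(a_j) = 0$. You also spell out two points the paper leaves implicit, and both are handled correctly: the choice of scaling $\lambda$ making $f(z)f(-z)$ unimodular, which gives existence and uniqueness of the even member of each family, and the use of ${\rm Im}\,\tau > 0$ to force $W = 0$.
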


\subsection{Liouville and Lam\'e curves} \label{ss:LL}

We start by describing the $n - 1$ algebraic equations which defines an algebraic curve $X_n \subset {\rm Sym}^n E$, called the Liouville curve in \cite{CLW}. It turns out to be the unramified loci of another hyperelliptc curve $Y_n \to \Bbb C$, called the Lam\'e curve, which parametrizes log-free solutions of Lame equations.

Under the notations $(w, x_j, y_j) = (\wp(z), \wp(a_j), \wp'(a_j))$, we compute
\begin{equation*}
\begin{split}
g(z) &= \sum_{j = 1}^n \frac{1}{w} \frac{y_j}{1 - x_j/w} \\
&= \sum_{j = 1}^n \frac{y_j}{w} + \sum_{j = 1}^n \frac{y_j
x_j}{w^2} + \cdots + \sum_{j = 1}^n \frac{y_j x_j^r}{w^{r + 1}} +
\cdots.
\end{split}
\end{equation*}
Since $g(z)$ has a zero at $z = 0$ of order $2n$ and $1/w$ has a
zero at $z = 0$ of order two, we get $n - 1$ vanishing conditions:
$$
\sum_{j = 1}^n y_j x_j^r = \sum_{j = 1}^n \wp'(a_j) \wp(a_j)^r = 0, \quad 0 \le r \le n - 2.
$$


\begin{theorem} [Hyperelliptic geometry \cite{Dahmen, CLW}]
For $x_i := \wp(a_i)$, $y_i := \wp'(a_i)$, the $n - 1$ algebraic equations 
$$
\sum y_i x_i^r = 0, \quad r = 0, \ldots, n - 2,
$$ 
defines a 2 to 1 map $a \mapsto \sum \wp(a_i)$:
$$
X_n \subset {\rm Sym}^n E \longrightarrow \Bbb P^1, \qquad \{(x_i, y_i)\} \mapsto \sum x_i.
$$
\end{theorem}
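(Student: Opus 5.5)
The plan is to identify $X_n$ with the log-free locus of the Lam\'e equation \eqref{e:Lame} through the Hermite--Halphen ansatz $w_a$. Then the map $a\mapsto \sum x_i$ becomes, up to an explicit affine change of variable, the projection $(C,B)\mapsto B$. The degree-two statement then follows from the involution $a\mapsto -a$.

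\textbf{Step 1: the ansatz solves a Lam\'e equation.} Start with $a=(a_1,\dots,a_n)$, $a_i\notin\Lambda$ distinct modulo $\pm$, and set
$$w_a(z)=e^{z\sum\zeta(a_i)}\prod_i \sigma(z-a_i)/\sigma(z).$$
Compute
$$\frac{w_a''}{w_a}=\Big(\frac{w_a'}{w_a}\Big)'+\Big(\frac{w_a'}{w_a}\Big)^2, \qquad \frac{w_a'}{w_a}=\sum_i\big(\zeta(z-a_i)+\zeta(a_i)-\zeta(z)\big).$$
This quotient is elliptic, because $w_a$ picks up only constant multipliers under $\Lambda$. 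Its possible poles are at $z=0$ and $z=a_i$.
\begin{itemize}
\item At $z=a_i$, the double pole $-\wp$ from the derivative cancels against the square of the simple pole. The residue of the simple pole is $2\sum_{j\ne i}\big(\zeta(a_i-a_j)+\zeta(a_j)-\zeta(a_i)\big)$.
\item At $z=0$, the principal part is $n(n+1)/z^2$ plus terms of lower order.
\end{itemize}

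\textbf{Step 2: the vanishing conditions and the value of $B$.} Requiring $w_a''/w_a=n(n+1)\wp(z)+B$ means:
\begin{itemize}
\item all residues at the $a_i$ vanish;
\item the coefficient of $z^{-1}$ at $0$ vanishes (this is automatic once the other residues vanish, by the residue theorem).
\end{itemize}
Expanding $\wp$ and $\zeta$ at $0$, the constant term gives
$$B=(2n-1)\sum_i\wp(a_i)$$
(up to the sign convention). This is the classical Hermite--Halphen formula.

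The system "all residues at $a_i$ vanish" must be shown equivalent to $\sum_j y_jx_j^r=0$ for $0\le r\le n-2$. I would do this through $g=(\log (w_a/w_{-a}))'$. It equals
$$\frac{w_a'}{w_a}-\frac{w_{-a}'}{w_{-a}}=\sum_j \frac{\wp'(a_j)}{\wp(z)-\wp(a_j)},$$
by the addition formula. The Wronskian $W(w_a,w_{-a})$ is constant exactly when $w_a$ and $w_{-a}$ solve the same equation $w''=Iw$. Since $g=W/(w_aw_{-a})$ and $w_aw_{-a}$ has a pole of order $2n$ at $0$, constancy of $W$ is equivalent to $\mathrm{ord}_0 g=2n$, i.e.\ to the $n-1$ equations derived just before the theorem. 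One also needs the converse: if $\mathrm{ord}_0g=2n$, then $w_a,w_{-a}$ solve a common Lam\'e equation. I would argue that
$$I:=\frac{w_a''}{w_a}=\frac{w_{-a}''}{w_{-a}}$$
follows from $W'=0$. The poles at the $a_j$ then cancel, because $I$ is regular wherever exactly one of $w_{\pm a}$ vanishes.

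\textbf{Step 3: degree two.} The map $\Phi:X_n\to\Bbb P^1$, $a\mapsto\sum x_i$, thus factors through $B$. The fibre over $B$ consists of the ansatz data of the solutions of $L_{n,B}$ of Hermite--Halphen product form. These are the common eigenfunctions of the monodromy (the Floquet solutions). For generic $B$, i.e.\ $\ell_n(B)\ne0$, there are exactly two of them, $w_a$ and $w_{-a}$, and they are independent. This uses the finite-gap property quoted earlier: log-free, with a one-dimensional space of each multiplier. Hence the generic fibre is $\{a,-a\}$, and $-a\ne a$ as points of $\mathrm{Sym}^nE$ off the branch locus. This gives degree $2$, with the involution $a\mapsto-a$ as deck transformation. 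Surjectivity onto $\Bbb P^1$ follows from properness after closing up at $0^n\mapsto\infty$.

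\textbf{Main obstacle.} The hard step is proving that every Floquet solution really has Hermite--Halphen form with distinct $a_i$. One must exclude degenerate configurations (some $a_i\in\Lambda$, or $a_i=\pm a_j$) inside $X_n$, which is what makes $X_n$ the unramified locus. I would handle this by counting zeros: an eigenfunction with multiplier has exactly $n$ zeros in $E$, by the argument principle applied to $w'/w$. I would then compare with the $2n+1$ roots of $\ell_n$, where $w_a=w_{-a}$ and the map ramifies.
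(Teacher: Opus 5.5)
Your proposal is correct, but it proves the two-to-one property by a different mechanism from the paper's.

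\textbf{The paper's route.} The paper imports from the earlier Chai--Lin--Wang work (a long calculation it does not repeat) the fact that $a\in X_n$ iff $w_{\pm a}$ are independent solutions of $w''=(n(n+1)\wp+B)w$ with $B=(2n-1)\sum\wp(a_i)$. It then argues through the second symmetric power. The product $w_aw_{-a}=(-1)^n\prod(\wp(z)-x_i)$ is a polynomial solution, in $x=\wp(z)$, of a third-order ODE. Its coefficients satisfy a triangular recursion in which $s_1=B/(2n-1)$ determines $s_2,\dots,s_n$. So $B$ fixes the multiset $\{x_i\}$, hence $a$ up to the signs of the individual $a_i$. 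A further argument reduces this to the global ambiguity $a\leftrightarrow -a$.

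\textbf{Your route.} You obtain the identification lemma in a few lines from $g=W(w_a,w_{-a})/(w_aw_{-a})$. You then describe the fibre through monodromy: each $w_b$ with $B_b=B$ spans a common eigenline of the abelian monodromy. Moreover, $w_b$ recovers $b$ itself from its zero set, not just $\{\wp(b_i)\}$. So the sign issue that the paper must settle separately never arises.

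\textbf{Trade-off.} The paper's route is purely algebraic and works uniformly in $B$, including branch values. It also produces the explicit $s_k(B)$ and the model $C^2=\ell_n(B)$, which your argument does not give.

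\textbf{One point to tighten.} Do not take the uniqueness of the two eigenlines from the quoted finite-gap property, which is essentially the hyperelliptic statement you are proving. It follows directly:
\begin{itemize}
\item For $a\in X_n$ with $a\neq -a$, the characters of $w_a$ and $w_{-a}$ are mutually inverse, since $w_aw_{-a}$ is elliptic.
\item If they coincided, $f=w_a/w_{-a}$ would be elliptic of degree $n$.
\item But $f'$ equals the constant Wronskian divided by $w_{-a}^2$, so it vanishes to order $2n$ at $0$.
\item Then $f-f(0)$ would have a zero of order $2n+1>n$, which is impossible.
\end{itemize}
Hence the fibre over $B_a$ is exactly $\{a,-a\}$.

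\textbf{Surjectivity.} Your zero-counting sketch, which you list as the main obstacle, is in fact already complete. A Floquet solution must have exponent $-n$ at $0$, by the residue theorem applied to $w'/w$. Hence it has $n$ simple zeros and Hermite--Halphen form, with $c=\sum\zeta(a_i)$ forced by the $z^{-1}$ term. This gives nonempty fibres directly, so you do not need the properness argument.
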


The proof relies on its relation to Lam\'e equations:
\begin{equation*}
\begin{split}
f &= \exp \int g\,dz = \exp \int \sum_{i = 1}^n (2 \zeta(a_i) -
\zeta(a_i - z) -
\zeta(a_i + z))\,dz\\
&= e^{2\sum_{i = 1}^n \zeta(a_i)z} \prod_{i = 1}^n \frac{\sigma(z
- a_i)}{\sigma(z + a_i)} = (-1)^n \frac{w_a}{w_{-a}},
\end{split}
\end{equation*}
where $\displaystyle w_a(z) := e^{z\sum \zeta(a_i)}
\prod_{i = 1}^n \frac{\sigma(z - a_i)}{\sigma(z) \sigma(a_i)}$ is the Hermite--Halphen ansatz.

\begin{lemma} \cite[Theorem 6.5, Corollary 6.7]{CLW}
We have $a \in X_n$ if and only if $w_a$ and $w_{-a}$ are two independent solutions of the Lam\'e equation
\begin{equation*}
\frac{d^2 w}{dz^2} - \Big(n(n + 1)\wp(z) + (2n -
1)\sum\nolimits_{i = 1}^n \wp(a_i)\Big) w = 0.
\end{equation*}
Namely the map $a \mapsto B_a$ is given by $B_a = (2n - 1) \sum \wp(a_i)$ .
\end{lemma}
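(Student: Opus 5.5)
The plan is to compare $w_a$ directly with the Lam\'e operator. We write $w_a$ in logarithmic form, $w_a'/w_a = \sum_i \zeta(a_i) + \sum_i (\zeta(z - a_i) - \zeta(z))$. The basic identity is then
$$
\frac{w_a''}{w_a} = \Big(\frac{w_a'}{w_a}\Big)' + \Big(\frac{w_a'}{w_a}\Big)^2 .
$$
Using it, we will compute the elliptic function $Q_a := w_a''/w_a$. Quasi-periodicity of $\sigma$ together with the factor $e^{z\sum\zeta(a_i)}$ shows that $w_a'/w_a$ is elliptic. Hence $Q_a$ is elliptic, and its only possible poles lie at $z = 0$ and $z = a_i$.

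\textbf{Local analysis at $z = 0$.} Near $0$ we have $w_a'/w_a = -n/z + c_0 + c_1 z + \cdots$, so $Q_a = n(n+1)/z^2 - 2nc_0/z + O(1)$. The $1/z$ term vanishes, since $c_0 = \sum\zeta(a_i) - \sum\zeta(a_i) = 0$; indeed $\zeta(-a_i) = -\zeta(a_i)$ and $\zeta(z) - 1/z$ is regular. We will compute the constant term as a symmetric expression in the $\wp(a_i)$, $\wp'(a_i)$.

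\textbf{Local analysis at $z = a_j$.} Near $a_j$ we have $w_a'/w_a = 1/(z - a_j) + d_j + O(z - a_j)$, where
$$
d_j = \sum\nolimits_i \zeta(a_i) - \zeta(a_j) + \sum\nolimits_{i \ne j}\zeta(a_j - a_i) .
$$
The double poles cancel ($-1 + 1 = 0$), so the residue of $Q_a$ at $a_j$ is $2d_j$. Thus $Q_a$ has no pole at $a_j$ iff $d_j = 0$ for all $j$. Using the addition formula $\zeta(u - v) - \zeta(u) + \zeta(v) = \tfrac12(\wp'(u) + \wp'(v))/(\wp(u) - \wp(v))$, these $n$ conditions become a rational system in the $(x_i, y_i)$. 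The main obstacle is to show that this system is equivalent to $\sum y_i x_i^r = 0$, $0 \le r \le n-2$. I would try to identify the conditions $d_j = 0$ with the vanishing of the $n$ residues of a single rational function like $\sum_j y_j/(w - x_j)$, compared against the expansion of $g$ in powers of $1/w$ carried out above, where that system arises as the order-$2n$ vanishing of $g$ at $0$. A Vandermonde/partial-fraction argument then converts ``all residue conditions'' into ``the first $n-1$ moments vanish''. This step should in fact follow from the paper's earlier computation: the order-$2n$ vanishing of $g = (\log f)'$ at $0$ is equivalent to $f = \pm w_a/w_{-a}$ having Schwarzian $-2(n(n+1)\wp + B)$, i.e. to $w_{\pm a}$ solving a common Lam\'e equation. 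This gives a cleaner route to the equivalence, which I would use if the direct residue bookkeeping gets messy.

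\textbf{Conclusion.} Once $Q_a$ is shown to be elliptic with its only pole $n(n+1)/z^2$ at $0$, we get $Q_a = n(n+1)\wp(z) + B_a$. By the symmetry $a \mapsto -a$, $w_{-a}$ satisfies the same equation with the same constant. Independence holds because $w_a/w_{-a} = \pm f$ is non-constant; this requires $a \cap (-a) = \emptyset$, i.e. $a$ away from the branch locus, which is exactly the defining condition of $X_n$ as the unramified part. For the value of $B_a$, I would compare the constant terms at $z = 0$. The expansion gives $B_a = -2nc_1 + (\text{terms from } \sum\zeta(z-a_i))$, and from $\zeta(z - a_i) = -\zeta(a_i) - \wp(a_i)z + O(z^2)$ one gets $c_1 = -\sum\wp(a_i)$ up to the $\zeta(z) - 1/z = O(z^3)$ correction. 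Carrying out the $O(1)$ bookkeeping of $(w_a'/w_a)' + (w_a'/w_a)^2$ (contributions $n c_1$ from the derivative and $-2nc_1$ from the square cross term, with the remainder absorbed) yields $B_a = (2n-1)\sum\wp(a_i)$. The converse direction follows because a common solution pair forces $Q_a$ pole-free at $a_j$, i.e. $d_j = 0$.
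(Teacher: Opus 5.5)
The paper gives no argument for this lemma (it defers to \cite{CLW} as a long calculation), so your proposal has to stand on its own. Its skeleton is sound: $Q_a := w_a''/w_a$ is elliptic with possible poles only at $0$ and the $a_j$, there is no $1/z$ term at $0$ because $c_0 = 0$, and $B_a$ is the constant term at $0$. Two of your formulas are wrong, however. Since $w_a'/w_a$ contains $-n\zeta(z)$, the constant at $a_j$ is
\[
d_j = \sum_i \zeta(a_i) - n\zeta(a_j) + \sum_{i\ne j}\zeta(a_j - a_i) = \sum_{i \ne j}\big(\zeta(a_j - a_i) - \zeta(a_j) + \zeta(a_i)\big) = \frac12\sum_{i\ne j}\frac{y_j + y_i}{x_j - x_i},
\]
not the expression you wrote. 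Only this grouping lets your addition formula apply. Also, at $z=0$ the derivative $(w_a'/w_a)'$ contributes $c_1$, not $nc_1$. Your count would give $-nc_1 = n\sum\wp(a_i)$, whereas the correct count $c_1 - 2nc_1$ with $c_1 = -\sum\wp(a_i)$ gives $(2n-1)\sum\wp(a_i)$.

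The genuine gap is the step you call the main obstacle, which is the whole content of ``$a \in X_n$ iff''. Your proposed identification fails as stated: the residues of $\sum_j y_j/(w-x_j)$ are the $y_j$, not the $d_j$. The Schwarzian shortcut is circular. $S(f) = -2(n(n+1)\wp + B)$ only says that $(f')^{-1/2}$ and $f(f')^{-1/2}$ solve the Lam\'e equation. To conclude the same for $w_{-a}$ and $w_a$ you need $f' \propto w_{-a}^{-2}$, i.e.\ that the Wronskian $W := w_a'w_{-a} - w_a w_{-a}'$ is constant, and that is precisely what must be shown.

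The missing idea is to compute $W$ directly. We have $W = w_a w_{-a}\, g$ with $g = (\log(w_a/w_{-a}))' = \sum_i y_i/(\wp(z) - x_i)$ and $w_a w_{-a} = \pm\prod_k(\wp(z) - x_k)$. Hence $W = \pm h(\wp(z))$, where $h(w) := \sum_i y_i\prod_{k\ne i}(w - x_k)$. Since $h(w)/\prod_k(w-x_k) = \sum_{r\ge 0}\big(\sum_i y_i x_i^r\big)w^{-r-1}$, the polynomial $h$ is constant iff the $n-1$ defining equations of $X_n$ hold.

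So, for $a \cap (-a) = \emptyset$ (which makes $g \not\equiv 0$), $a \in X_n$ iff $W$ is a nonzero constant, iff $w_a''/w_a = w_{-a}''/w_{-a}$. This common quotient is regular at each $a_j$ (there it equals $w_{-a}''/w_{-a}$) and at each $-a_j$. Your local computation at $0$ then gives $n(n+1)\wp + (2n-1)\sum\wp(a_i)$.

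Conversely, two independent solutions have a nonzero constant Wronskian, so $h$ is constant. They also cannot share a zero at a non-lattice point, which forces $a \cap (-a) = \emptyset$.

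If you prefer to keep your residue route, the same $h$ closes it. One has $h'(x_j) = 2d_j\prod_{k \ne j}(x_j - x_k)$. So for distinct $x_j$, all $d_j$ vanish iff $h'$, of degree $\le n-2$, has $n$ roots, i.e.\ iff $h$ is constant.
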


This is a long calculation which will not be repeated here. Instead, I will review the classical argument leading to the explicit hyperelliptic model $C^2 = \ell_n(B; g_2, g_3)$ (cf.~\cite[\S 23.7]{Whittaker}, \cite[\S 7.3.1]{CLW}). 

Let $w_a (z)$ and $w_{-a} (z)$ be the ansatz solutions to the Lam\'e equation
$$
w'' = (n(n + 1) \wp(z) + B) w.
$$
Let $X(z) = w_a(z) w_{-a}(z)$. By the addition theorem,
\begin{equation*}
X(z) = (-1)^n \prod_{i = 1}^n \frac{\sigma(z + a_i) \sigma (z - a_i)}{\sigma(z)^2 \sigma(a_i)^2} = (-1)^n \prod_{i = 1}^n (\wp(z) - \wp(a_i)).
\end{equation*}
That is, $X(x) = (-1)^n \prod_{i = 1}^n (x - x_i)$ is a polynomial in $x$. By construction, $X(z)$ satisfies the second symmetric power of the Lam\'e equation:
\begin{equation*}
\frac{d^3X}{dz^3} - 4 (n (n + 1) \wp + B) \frac{dX}{dz} -2n(n + 1) \wp' X = 0.
\end{equation*}
Hence $X(x)$ is a polynomial solution, in variable $x$, to
\begin{equation*} 
p(x) X''' + \tfrac{3}{2} p'(x) X'' - 4((n^2 + n - 3)x + B) X' - 2n (n + 1) X  = 0.
\end{equation*}
Thus $X$ is determined by $B$ and certain initial conditions.

Write $X(x) = (-1)^n (x^n - s_1 x^{n - 1} + \cdots + (-1)^n s_n)$, this translates to a linear recursive relation for $\mu = 0, \ldots, n -1$: 
\begin{equation*} 
\begin{split}
0 = 2(n - \mu)(2\mu + 1)(n + \mu + 1) &s_{n - \mu} \\
\quad - 4(\mu + 1) B &s_{n - \mu - 1} \\
\qquad + \tfrac{1}{2} g_2 (\mu + 1) (\mu + 2) (2\mu + 3) &s_{n - \mu - 2}\\
\quad \qquad - g_3 (\mu + 1) (\mu + 2) (\mu + 3) &s_{n - \mu - 3}.
\end{split}
\end{equation*}

We set $s_0 = 1$. For $\mu = n - 1$ we get $B = (2n - 1) s_1$ as expected. Thus all $s_2, \ldots, s_n$, $X(z)$, are determined by $s_1$, i.e.~by $B$, alone. A slightly more work shows that the set $a = \{a_i\}$ is also determined by $B$ up to sign. Hence $a \mapsto B_a$ is 2 to 1. A complete description of the hyperelliptic geometry is given in \cite[Theorem 0.7]{CLW}. We list only some of its properties:
\begin{itemize}
\item[(i)] 
There natural projective compactification $\overline X_n \subset {\rm Sym}^n E$ coincides with the projective model of the hyperelliptic curve $Y_n$ defined by
\begin{equation*}
\begin{split}
C^2 = \ell_n(B, g_2, g_3) = 4B s_n^2 + 4g_3 s_{n - 2} s_n - g_2 s_{n - 1} s_n - g_3 s_{n - 1}^2
\end{split}
\end{equation*}
in affine coordinates $(B, C)$, where $s_k = s_k(B, g_2, g_3) = r_k B^k + \cdots \in \Bbb Q[B, g_2, g_3]$ is an universal polynomial of homogeneous degree $k$ with $\deg g_2 = 2$, $\deg g_3 = 3$, and $B = (2n - 1) s_1$. 

\item[(ii)] In particular, $\overline X_n$ coincides with the unramified loci of $Y_n \to \Bbb C$: $(B, C) \mapsto B$, and the added infinity point $0^n \in \overline X_n$ is a smooth point.

\item[(iii)] Thus $\deg \ell_n = 2n + 1$ and $\overline X_n$ has arithmetic genus $g = n$. 

\item[(iv)] $\overline X_n$ is smooth except for a finite number of $\tau$, namely the discriminant loci of $\ell_n(B, g_2, g_3)$ so that $\ell_n(B)$ has multiple roots. In particular $\overline X_n$ is smooth for rectangular tori.
\end{itemize}

\subsection{Pre-modular forms} A closer look at the Hecke function $Z$ leads to  

\begin{definition} \cite[Definition 0.1]{LW-II}
An analytic function $h$ in $(z, \tau) \in \Bbb C \times \Bbb H$ is pre-modular of weight $k \in \Bbb N$ if it satisfies
\begin{itemize}
\item[(1)] For any fixed $\tau$, h(z) is analytic in $z$ and $\bar z$ and it depends only on $z \pmod{\Lambda_\tau} \in E_\tau$; 

\item[(2)] For any fixed torsion type $z \pmod{\Lambda_\tau} \in E_\tau[N]$, the function $h(\tau)$ is modular of weight $k$ with respect to $\Gamma(N)$.
\end{itemize}
\end{definition}

Now we are ready to study the last equation on $\overline X_n$:
\begin{equation} \label{Green-eq}
0 = -4\pi \sum\nolimits_{i = 1}^n \nabla G(a_i) = \sum\nolimits_{i = 1}^n Z(a_i).
\end{equation}

Consider the fundamental rational function on $E^n$:
$$
\z_n(a_1, \ldots, a_n) := \zeta (a_1 + \cdots + a_n) - \sum\nolimits_{i = 1}^n \zeta(a_i).
$$
Let $a_i = r_i \omega_1 + s_i \omega_2$, then 
\begin{equation*}
\begin{split}
-4\pi \sum \nabla G(a_i) &= \sum (\zeta(a_i) - r_i \eta_1 - s_i \eta_2) \\
&= \zeta(\sum a_i) - (\sum r_i) \eta_1 - (\sum s_i) \eta_2 - \z_n(a) \\
&= Z(\sum a_i) - \z_n(a).
\end{split}
\end{equation*} 
Hence (\ref{Green-eq}) is equivalent to
\begin{equation} \label{z=Z}
\z_n(a) = Z(\sum a_i).
\end{equation}
It is thus crucial to study the branched covering (addition) map
\begin{equation} \label{e:sigma}
\sigma_n: \overline X_n \to E, \qquad a \mapsto \sigma_n(a) := \sum_{i = 1}^n a_i.
\end{equation}

\begin{theorem} \cite[Theorem 0.2, Theorem 0.3]{LW-II} \label{t:Wn} {\ }
\begin{itemize}
\item [(1)] $\deg \sigma_n = \tfrac{1}{2} n (n + 1)$.

\item [(2)] There is a universal (weighted homogeneous) polynomial 
$$
W_n(\z) \in \Bbb C[g_2, g_3, \wp(\sigma), \wp'(\sigma)][\z]
$$ 
of degree $\tfrac{1}{2} n (n + 1)$ with
$$
W_n(\z_n) = 0.
$$
Moreover, $\z_n \in K(\overline X_n)$ is a primitive generator for the field extension $K(\bar X_n)$ over $K(E)$. 

\item [(3)] The function 
$$
Z_n(\sigma; \tau) := W_n(Z)
$$ 
is pre-modular of weight $\tfrac{1}{2} n (n + 1)$. That is, it is $\Gamma(N)$-modular if $\sigma \in E_\tau[N]$.
\end{itemize}
\end{theorem}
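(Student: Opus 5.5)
The statement to be proved is Theorem \ref{t:Wn}, quoted from \cite{LW-II}. My plan is to treat $\overline X_n$ as a curve over $E$ through the addition map $\sigma_n$. I would compute the degree of $\sigma_n$ first, then build $W_n$ as the characteristic polynomial of $\z_n$, and finally read off pre-modularity from the weight homogeneity of the construction.

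\emph{Degree of $\sigma_n$.} I would compute $\deg\sigma_n$ by intersecting $\overline X_n$ with a fibre $\{a : \sum a_i = \sigma\}$ for a generic $\sigma$.
\begin{itemize}
\item The cleanest route is to let $\sigma \to 0$ and work near the smooth point $0^n \in \overline X_n$, which is the point at infinity of the hyperelliptic model $Y_n$.
\item Near this point, take the local parameter $t = B^{-1/2}$. Then each $a_i$ behaves like $c_i t$, with the $c_i$ roots of a fixed polynomial (Hermite-type asymptotics from the recursion for the $s_k$). The sum $\sum a_i$ vanishes to some order $m$ in $t$.
\item Alternatively, I would compute $\deg \sigma_n$ as the intersection number of $\overline X_n$ with the pullback of a point class, using the class of $\overline X_n$ in $H_2({\rm Sym}^n E)$.
\end{itemize}
Either way, the expected answer is $\tfrac12 n(n+1)$. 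The cross-check I would use is induction on $n$ via the degeneration $q \to 0$ (nodal cubic), where $X_n$ becomes a trigonometric Liouville curve and the fibres can be counted explicitly as $1 + 2 + \cdots + n$.

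\emph{Construction of $W_n$.} Set $d = \tfrac12 n(n+1)$ and define
\[
W_n(\z) = \prod_{a \in \sigma_n^{-1}(\sigma)} \bigl(\z - \z_n(a)\bigr).
\]
\begin{itemize}
\item The coefficients are symmetric functions of the fibre, so they lie in $K(E) = \Bbb C(\wp(\sigma), \wp'(\sigma))$.
\item To show they are polynomial in $\wp, \wp'$, I check that they have poles only at $\sigma = 0$. The function $\z_n$ has poles only where some $a_i \in \Lambda$ or $\sum a_i \in \Lambda$, and on $\overline X_n$ this happens only over $\sigma = 0$, namely at the point $0^n$.
\item Each coefficient has weight equal to its degree, with $\wp$, $\wp'$, $g_2$, $g_3$ of weights $2, 3, 4, 6$ and $\z$ of weight $1$. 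This comes from the scaling $\Lambda \mapsto c\Lambda$, under which everything is homogeneous. Universality of the coefficients in $g_2, g_3$ follows because the equations of $X_n$ are universal.
\end{itemize}

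\emph{Primitivity of $\z_n$.} I must show that $\z_n$ separates the points of a generic fibre. If it did not, $W_n$ would be a proper power of the minimal polynomial of $\z_n$. I would test separation at the degenerate fibre near $0^n$, where the Laurent leading terms of $\z_n(a)$ at the $d$ distinct points are explicitly distinct. Equivalently, I would test in the $q$-expansion limit, where $\z_n$ reduces to an explicit rational function.

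\emph{Pre-modularity of $Z_n$.} Set $Z_n(\sigma;\tau) = W_n(Z(\sigma))$.
\begin{itemize}
\item For $\sigma = (k_1\omega_1 + k_2\omega_2)/N$, Hecke's function $Z$ is modular of weight one for $\Gamma(N)$.
\item The quantities $\wp(\sigma)$ and $\wp'(\sigma)$ have weights $2$ and $3$ for $\Gamma(N)$, and $g_2, g_3$ are modular of weights $4$ and $6$.
\item Weighted homogeneity of $W_n$ then gives weight $d$. Real-analyticity in $\sigma$ follows because $Z$ is real-analytic and $\Lambda$-periodic.
\end{itemize}

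\emph{Main obstacle.} I expect the hardest step to be the degree computation in part (1). This requires a precise understanding of $\overline X_n$ near $0^n$ and of the ramification of $\sigma_n$ there; I would lean on the smoothness of $0^n$ from \cite[\S 7.6]{CLW}. The second difficulty is the separation argument for primitivity.
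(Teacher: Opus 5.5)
\textbf{There is a genuine gap in both main steps.} Your parts (1) and (2) rest on the picture that the fibre of $\sigma_n$ over $0$ collapses onto the smooth point $0^n$. That picture is wrong: $\sigma_n$ has a \emph{simple} zero at $0^n$.

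To see this, rescale $z = tu$, $B = t^{-2}$. The Lam\'e equation becomes $w_{uu} = (n(n+1)u^{-2} + 1)w$, whose solutions are $e^{\mp u}\theta_n(\pm u)u^{-n}$, where $\theta_n(u) = \sum_{k=0}^n \frac{(n+k)!}{k!\,(n-k)!\,2^k}u^{n-k}$ is the reverse Bessel polynomial. So your $c_i$ are the roots of $\theta_n(\pm u)$, $\sum c_i = \mp\tfrac12 n(n+1) \neq 0$, and your vanishing order $m$ is $1$.

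\textbf{Part (1).} The remaining $\tfrac12 n(n+1) - 1$ points of $\sigma_n^{-1}(0)$ lie elsewhere, at branch points $a = -a$ with $\sum a_i \equiv 0$. For $n = 2$ they are the two points $\{q,-q\}$ with $\wp(q) = \pm\sqrt{g_2/12}$, coming from the Lam\'e polynomials $\wp(z) - c$ with $B = 6c$. Counting such points in general is exactly where the classification of Lam\'e polynomials into four species is needed, and your plan does not do it.

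Your other two routes do not close the gap. The homological route is circular: $H^2(\mathrm{Sym}^n E;\Bbb Q)$ is spanned by the class $x$ of $\{p\} + \mathrm{Sym}^{n-1}E$ and the fibre class of the addition map. Since $x\cdot[\overline X_n] = n$, determining $[\overline X_n]$ is the same problem as determining $\deg\sigma_n$. The $q \to 0$ count $1 + 2 + \cdots + n$ is asserted, not derived.

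The missing idea is the paper's. Lift to the ordered curve $V_n \subset E^n$ with $V_n/S_n = \overline X_n$, and apply the Theorem of the Cube to $f = a_1 + \cdots + a_{k-1}$, $g = a_k$, $h = a_{k+1}$. With $S_n$-symmetry this gives, for the partial sums $f_k = a_1 + \cdots + a_k$,
$\deg f_{k+1} = 2\deg f_k - \deg f_{k-1} + \deg f_2 - 2\deg f_1$.
Everything then reduces to $\deg f_1 = n!$ and $\deg f_2 = 3\cdot n!$. Only the second base case needs delicate input: smoothness at $0^n$ and the four species.

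\textbf{Part (2), primitivity.} The separation step fails for the same reason. On each sheet of $\sigma_n$ near $\sigma = 0$ that does not pass through $0^n$, the $a_i$ stay away from $\Lambda$, so $\z_n = \zeta(\sigma) + O(1)$. Only one sheet passes through $0^n$, so the leading Laurent terms at $d - 1$ of the $d$ points coincide, all equal to $1/\sigma$.

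Concretely, $\sigma^3 W_2(u/\sigma) \to u^3 - 3u + 2 = (u-1)^2(u+2)$, and $W_3$ similarly gives $(u-1)^5(u+5)$. The two sheets of $\overline X_2$ with $u = 1$ are separated only at order $\sigma$, via $\z_2 = 1/\sigma + \wp(q)\sigma + O(\sigma^2)$ with $\wp(q) = \pm\sqrt{g_2/12}$. A degeneration proof would therefore need the first-order geometry of $\overline X_n$ at every branch point over $0$, which you do not supply.

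The paper, following \cite{LW-II}, proves separation globally instead. Suppose $\sigma_n(a) = \sigma_n(b) = \sigma_0$ and $\sum\zeta(a_i) = \sum\zeta(b_i)$. Then $w_a$ and $w_b$ have the same multipliers, so $Q = w_a w_{-b} + w_{-a} w_b$ is an even elliptic function, hence a polynomial in $x = \wp(z)$. It solves the fourth order equation satisfied by products of solutions of the Lam\'e equations with parameters $B_a$ and $B_b$. This forces $B_a = B_b$ outside finitely many $\sigma_0$, and then $a = b$ by the hyperelliptic structure.

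\textbf{What is fine.} The remaining steps are correct and match the paper:
\begin{itemize}
\item defining $W_n$ as the characteristic polynomial over $K(E)$;
\item polynomiality in $\wp(\sigma), \wp'(\sigma)$, because all poles of $\z_n$ lie over $\sigma = 0$ (though not only at $0^n$, as you write);
\item weighted homogeneity;
\item part (3).
\end{itemize}
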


Idea of proof for (1): apply \emph{Theorem of the Cube} in the theory of abelian varieties \cite{Mumford}: for any three morphisms 
$$
f, g, h: V_n \longrightarrow E
$$ 
and $L \in {\rm Pic}\,E$, there is an isomorphism
\begin{equation*}
\begin{split}
(f + g + h)^*L &\cong (f + g)^*L \otimes (g + h)^*L \otimes (h + f)^*L \\
&\qquad \otimes f^*L^{-1} \otimes g^*L^{-1} \otimes h^*L^{-1}.
\end{split}
\end{equation*}

We apply it to the case $V_n \subset E^n$ which is the ordered $n$-tuples so that $V_n/S_n = \bar X_n$, and $\deg L = 1$. We prove inductively that the map 
$$
f_k(a) := a_1 + \cdots + a_k
$$
has degree $\tfrac{1}{2}k(k + 1) n!$.
 
This is non-trivial for $k = 1, 2$. It makes use that $\infty \in \overline X_n$ is non-singular and requires a detailed classification of Lam\'e functions in 4 species \cite{Whittaker}. 

From $k$ to $k + 1$ with $k \ge 2$: let 
$$
f(a) = f_{k - 1}(a), \qquad g(a) = a_{k}, \qquad h(a) = a_{k + 1}.
$$ 
Then $f_{k + 1}$ has degree $n!$ times
\begin{equation*}
\begin{split}
&\tfrac{1}{2}k(k + 1) + 3 + \tfrac{1}{2} k (k + 1) - \tfrac{1}{2} (k - 1)k - 1 - 1 \\
&= \tfrac{1}{2} (k + 1)(k + 2)
\end{split}
\end{equation*}
as expected.

Idea of proof of (2): the major tool is the \emph{tensor product} of two Lam\'e equations $w'' = I_1 w$ and $w' = I_2 w$, where $I = n(n + 1) \wp(z)$, 
$$
I_1 = I + B_a, \qquad I_2 = I + B_b.
$$ 

For $\bar X_n(\tau)$ smooth, and for a general point $\sigma_0 \in E$, we need to show that the $\tfrac{1}{2} n (n + 1)$ points on the fiber of $\bar X_n \to E$ above $\sigma_0$ has distinct $\z_n$ values. 

It is enough to show that for $\sigma_n(a) = \sigma_n(b) = \sigma_0$, 
$$
\sum \zeta(a_i) = \sum \zeta(b_i) \Longrightarrow B_a = B_b
$$ 
(and then $a = b$ by the hyperelliptic property of $\overline X_n$). 

If $w_1'' = I_1 w_1$ and $w_2'' = I_2 w_2$, then the product $q = w_1 w_2$ satisfies 
\begin{equation*} \label{intro-4th-ode}
q'''' -2(I_1 + I_2) q'' -6I'q' + ((B_a - B_b)^2 - 2I'')q = 0.
\end{equation*}

If $a = b$, a third ODE (the second symmetric power) is enough, as is studied in \S \ref{ss:LL} in deriving the hyperelliptic property..

If $a \ne b$, by addition law we find that 
$$
Q = w_a w_{-b} + w_{-a} w_b
$$ 
is an \emph{even elliptic function} solution. It is a \emph{polynomial} in $x = \wp(z)$. This leads to strong constraints on the corresponding 4-th order ODE in variable $x$, and eventually leads to a contradiction except for a finite choices of $\sigma_0$. The actual proof in \cite{LW-II} is lengthy and will not be repeated here. 

Proof for (3) is immediate: since $Z$ is pre-modular of weight one, it follows that $Z_n(\sigma; \tau) := W_n(Z)$ is pre-modular of weight $\frac{1}{2} n (n + 1)$.

\begin{example} [$n = 2$]
For $\z_2(a_1, a_2) = \zeta(a_1 + a_2) - \zeta(a_1) - \zeta(a_2)$, on $X_2$:
$$
\z_2^3(a) - 3\wp(a_1 + a_2) \z_2(a) - \wp'(a_1 + a_2) = 0.
$$
On $E^2$ it has one more term $-\tfrac{1}{2} (\wp'(a_1) + \wp'(a_2))$. Thus,
$$
Z_2(\sigma; \tau) = W_2(Z) = Z^3 - 3\wp(\sigma) Z - \wp'(\sigma).
$$
\end{example}

\begin{example} [$n = 3$] 
For $\z = \z_3(a) = \zeta(a_1 + a_2 + a_3) - \zeta(a_1) - \zeta(a_2) - \zeta(a_3)$, on $X_3$:
$$
\z^6 - 15\wp \z^4 - 20 \wp'\z^3 + (\tfrac{27}{4} g_2 - 45 \wp^2) \z^2 - 12\wp' \wp \z - \tfrac{5}{4} \wp'^2 = 0.
$$
Thus, $Z_3(\sigma; \tau) = W_3(Z)$.
\end{example}

Both $Z_2$ and $Z_3$ are known to \cite{Dahmen} (cf.~Appendix \ref{classical-Z}). In principle, since the addition map $\sigma_n$ can be explicitly computed, the polynomial $W_n(\bz)$ can be calculated from certain resultant via the elimination theory. Unfortunately the computation is necessarily demanding. A slight short cut was observed in \cite{LW-II} based on relations with the finite gap integration theory and some explicit formulas in \cite{Maier} on the \emph{twisted Lam\'e polynomials}. This allows to produce some new examples. A couple hours \emph{Mathematica} calculation gives:

\begin{example} [$n = 4$] \cite[Example 5.10]{LW-II} \label{ex:W4}
The degree 10 polynomial $W_4(\bz)$ is 
\begin{equation} \label{W4} 
\begin{split}
W_4(\bz) &= \bz^{10} - 45 \wp \bz^8 - 120 \wp' \bz^7 + (\tfrac{399}{4}g_2 - 630 \wp^2) \bz^6 -504 \wp \wp' \bz^5 \\
&\quad  - \tfrac{15}{4} (280 \wp^3 - 49 g_2 \wp - 115 g_3) \bz^4 + 15(11 g_2 - 24 \wp^2) \wp' \bz^3\\
&\qquad  - \tfrac{9}{4} (140 \wp^4 - 245 g_2 \wp^2 + 190 g_3 \wp + 21 g_2^2) \bz^2 \\
&\qquad \quad -(40 \wp^3 - 163 g_2 \wp + 125 g_3) \wp' \bz + \tfrac{3}{4}(25 g_2 - 3\wp^2) (\wp')^2.
\end{split} 
\end{equation}
The weight 10 pre-modular form $Z_4(\sigma; \tau)$ is then obtained.
\end{example}

For $n = 1$, $Z_1 \equiv Z = -4\pi \nabla G$ is the Hecke modular function. The critical point equation ($\Longleftrightarrow$ type II solutions of MFE) is transformed into zero of pre-modular forms. This now generalizes to all $n \ge 1$:

\begin{theorem} \cite{LW-II} \label{t:equiv} The following are all equivalent: \begin{itemize} 
\item [(i)] Solution $u$ to the mean field equation for $\rho = 8\pi n$. 

\item [(ii)] Periods integrals 
$$
\displaystyle \int_{L_j} g \in \sqrt{-1} \Bbb R
$$ 
for $j =1, 2$. (The real coefficients give the $\omega_j$ coordinates of $\sum a_i$.)

\item [(iii)] Green function equation on $X_n$:
$$
\sum_{i = 1}^n \nabla G(a_i) = 0,
$$ 
where $X_n$ is the unramified loci of the hyperelliptic curve $Y_n$. 

\item [(iv)] Coincidence equation under the addition map $\sigma_n: \overline X_n \to E$:
$$
\z_n(a) = Z(\sigma_n(a)).
$$

\item [(v)] Non-trivial zero of the pre-modular form 
$$
Z_n(\sigma; \tau) := W_n(Z).
$$
\end{itemize}
\end{theorem}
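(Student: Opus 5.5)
The statement immediately above is Theorem \ref{t:equiv}. I would prove it as a chain (i)$\Leftrightarrow$(ii)$\Leftrightarrow$(iii)$\Leftrightarrow$(iv)$\Leftrightarrow$(v), assembling results already recorded in this section.

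\textbf{(i)$\Leftrightarrow$(ii)$\Leftrightarrow$(iii).} For (i)$\Leftrightarrow$(ii), I would use the type II structure theorem \cite[Theorem 0.6]{CLW} quoted above. Every solution for $\rho = 8\pi n$ is of type II. Each scaling family contains a unique even member, and its developing map is $f = f(0)\exp\int^z g$, with $g = \sum_{i=1}^n \wp'(a_i)/(\wp(z) - \wp(a_i))$ and $a$ satisfying the $n-1$ vanishing conditions, i.e.~$a \in X_n$.

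Conversely, start from $a \in X_n$ whose periods $\int_{L_j} g$ are purely imaginary. Then $f$ has multipliers $e^{2i\theta_j}$ along the two periods. The local constraints (simple zeros and poles off $\Lambda$, and the correct vanishing order at $0$) come from $\operatorname{ord}_0 g = 2n$. So $u$ defined by \eqref{e:dev} is a genuine solution.

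The equivalence (ii)$\Leftrightarrow$(iii) is exactly the periods-integral lemma \cite[Proposition 2.3]{LW2}. The key fact there is that $F_1(a)$ and $F_2(a)$ are, modulo $4\pi i\Bbb Z$, equal to $2Z(a)\omega_j$ plus a purely imaginary term. Since $\omega_1,\omega_2$ are $\Bbb R$-independent, both sums lie in $\sqrt{-1}\Bbb R$ if and only if $\sum_i Z(a_i) = 0$, which is $\sum_i \nabla G(a_i) = 0$.

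\textbf{(iii)$\Leftrightarrow$(iv).} This is the identity computed above:
$$
\sum_i Z(a_i) = Z\Big(\sum_i a_i\Big) - \z_n(a).
$$
It follows from writing $\sum_i (\zeta(a_i) - r_i\eta_1 - s_i\eta_2)$ and using the additivity of the real coordinates $r_i, s_i$. So the Green equation \eqref{Green-eq} is exactly the coincidence equation \eqref{z=Z} on $X_n$.

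\textbf{(iv)$\Leftrightarrow$(v).} This is the main obstacle. Fix $\sigma \in E \setminus E[2]$ and put $W_n$ as in Theorem \ref{t:Wn}. Since $W_n(\z_n) = 0$ identically on $\overline X_n$, any $a$ satisfying (iv) with $\sigma_n(a) = \sigma$ gives $Z_n(\sigma;\tau) = W_n(Z(\sigma)) = W_n(\z_n(a)) = 0$.

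For the converse I would use primitivity in Theorem \ref{t:Wn}(2). Then $W_n$, viewed as a polynomial in $\z$ over $K(E)$, is the minimal polynomial of $\z_n$, of degree $\deg\sigma_n$. Over a point $\sigma$, its roots are therefore the values $\z_n(a)$ over the fibre $\sigma_n^{-1}(\sigma)$, counted with multiplicity. So a root $Z(\sigma)$ equals $\z_n(a)$ for some $a$ in the fibre, and (iv) holds.

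Two things must be checked carefully here.
\begin{itemize}
\item The point $a$ must actually lie in $X_n$, not at the boundary points of $\overline X_n$ (such as $0^n$, or points with some $a_i \in \Lambda$ or $a_i = a_j$). I would rule these out using $\sigma \notin E[2]$ and the explicit behaviour of $\z_n$ there, where it is either infinite or forces $\sigma$ to be a $2$-torsion point.
\item The fibre correspondence must hold for every $\sigma$, not just a generic one. I would get this from properness of $\sigma_n$, so that roots of $W_n(\cdot)$ specialize continuously along fibres.
\end{itemize}

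Uniqueness of $a$ given a simple zero follows because distinct $\z_n$-values separate generic fibre points. For multiple zeros, the statement claims only the equivalence of existence, so this suffices.
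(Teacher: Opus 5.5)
Your chain (i)$\Leftrightarrow$(ii)$\Leftrightarrow$(iii)$\Leftrightarrow$(iv)$\Leftrightarrow$(v) is the same as the paper's, and most links are sound:
\begin{itemize}
\item the type II structure theorem;
\item the period lemma, combined with the $\Bbb R$-independence of $\omega_1,\omega_2$;
\item the identity $\sum_i Z(a_i)=Z(\sigma_n(a))-\z_n(a)$;
\item for (v)$\Rightarrow$(iv), reading $W_n$ as the minimal (hence characteristic) polynomial of $\z_n$ over $K(E)$, together with the fact that $\overline X_n\setminus X_n=\{0^n\}\cup\{a=-a\}$ lies over $E[2]$.
\end{itemize}
The gap is in the forward direction (iv)$\Rightarrow$(v), and hence in (i)$\Rightarrow$(v). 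You fix $\sigma\notin E[2]$ at the outset. So you only show that a solution whose $a$ happens to lie over a non-$2$-torsion point gives a zero of $Z_n$. The theorem says every solution gives a \emph{non-trivial} zero. You therefore must prove $\sigma_n(a)\notin E[2]$ for every $a\in X_n$ satisfying (iii). This does not follow from $a\in X_n$ alone. For $n=2$ and $\sigma=\tfrac12\omega_1$ one has $W_2(\z)=\z(\z^2-3e_1)$. When $e_1\neq 0$, only the root $\z=0$ comes from the branch point $\{\tfrac12\omega_2,\tfrac12\omega_3\}$. The roots $\pm\sqrt{3e_1}$ come from a pair $\pm b\in X_2$ in the same fibre.

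The missing step is short, and the parenthetical in (ii) points to it. Suppose $a\in X_n$ satisfies (iii) and $\sum_i a_i=r\omega_1+s\omega_2$ with $r,s\in\tfrac12\Bbb Z$. By the period lemma and $\sum_i Z(a_i)=0$, we get $\int_{L_1}g=-4\pi i s$ and $\int_{L_2}g=4\pi i r$ modulo $2\pi i\Bbb Z$. Both periods therefore lie in $2\pi i\Bbb Z$. So $f=f(0)\exp\int^z g$ is an elliptic function of degree $n$, with simple zeros at the $a_i$ and simple poles at the $-a_i$. But $f(0)\neq 0,\infty$, and $f'=fg$ vanishes to order exactly $2n$ at $0$. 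Hence $f-f(0)$ has a zero of order $2n+1>n$, which is impossible. So every solution lies over $E\setminus E[2]$, and your fibrewise argument then yields a non-trivial zero. The paper's sketch records only the complementary exclusion, that branch points satisfy the Green equation trivially, so this is a point you have to supply yourself.
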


For the last one, we notice that the branch point $a \in Y_n \backslash X_n$ ($a = -a$) satisfies the Green equation trivially, which are excluded.

\subsection{\bf Chamber structure and wall crossing} 

\subsubsection{Deformations in $\rho \in \Bbb R^+$}

In \cite{LW}, the PDE technique used in $\rho = 8\pi$ is the \emph{method of continuity} to connect to the unique solution for $\rho = 4\pi$ by establishing the non-degeneracy of linearized equations over $[4\pi, 8\pi]$. 

This requires Moser's symmetrization and isoperimetric inequalities in singular metrics. For general $\rho$, such a non-degeneracy statement is wrong. Nevertheless, since solutions $u_\eta$ always exist for $\rho = 8\pi \eta$, $\eta \not\in \Bbb N$, it is natural to study 
$$
\lim_{\eta \to n} u_\eta.
$$ 

If the limit does not blow up, it converges to a solution $u$ for $\rho = 8\pi n$. 

For the blow-up case, we have the connection between the blow-up set and the hyperelliptic geometry of $\overline X_n \to \Bbb P^1$:

\begin{theorem} \cite[Theorem 0.7.5]{CLW} \label{blow-up-set}
Suppose that $S = \{a_1, \cdots, a_n\}$ is the blow-up set of a sequence of solutions $u_k$ with $\rho_k \to 8\pi n$ as $k \to \infty$, then 
$$
S\in Y_n = \overline X_n \setminus \{\infty\}.
$$ 
Moreover, 
\begin{itemize}
\item[(1)] If $\rho_k \ne 8\pi n$ then $S$ is a branch point ($a = -a$) of $Y_n \to \Bbb C$.

\item[(2)] If $\rho_k = 8\pi n$ for all $k$, then $S$ is not a branch point of $Y_n$.
\end{itemize}
\end{theorem}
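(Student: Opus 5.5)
The plan is a local blow-up analysis near the $n$ concentration points. We first convert the resulting Pohozaev-type balance conditions into the algebraic equations defining $Y_n$. We then use a finer expansion of $\rho_k - 8\pi n$ to separate cases (1) and (2).

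\textbf{Step 1 (mass quantization and location).} For $\triangle u_k + e^{u_k} = \rho_k\delta_0$ with $\rho_k \to 8\pi n$, the standard Brezis--Merle alternative and the Li--Shafrir mass quantization give the following. At a blow-up point $a \ne 0$ the local mass is $8\pi$. At $0$ it would be $8\pi(1+\alpha)$ or a combination incompatible with the total $8\pi n$; here $\alpha$ is the strength of the source, with $8\pi\alpha = \rho_k$. Hence $0 \notin S$, $S$ consists of $n$ points counted with multiplicity, and away from $S$
$$
u_k - \bar u_k \longrightarrow 8\pi \sum_{j=1}^n G(z - a_j) - 8\pi n\, G(z).
$$
The main thing to check here is that coincident blow-up points, i.e.\ $a_i = a_j$, are excluded or absorbed. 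I would treat these by the same local argument, since $Y_n$ allows multiplicities only in the limiting sense. A similar check is needed for points with $a_j = -a_i$.

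\textbf{Step 2 ($S \in Y_n$).} The local Pohozaev identity at each $a_j$ forces the regular part of the limiting function to be critical there:
$$
\nabla_z\Big(\sum_{i\ne j} G(z - a_i) - n\,G(z)\Big)\Big|_{z = a_j} = 0, \qquad j = 1, \ldots, n.
$$
Writing $-4\pi\,\partial_z G(z) = \zeta(z) - r\eta_1 - s\eta_2$ as in \eqref{e:Hecke}, the $j$-th equation becomes
$$
\sum_{i\ne j}\big(\zeta(a_j - a_i) + \zeta(a_i) - \zeta(a_j)\big) \;=\; \text{(a non-holomorphic term)}.
$$
The non-holomorphic term comes from the linear parts $(n-1)a_j - \sum_{i\ne j} a_i - \ldots$. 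After using $\sum_{i\ne j}(a_j - a_i) - (n-1)a_j = -\sum_i a_i + a_j$ and recombining, it reduces to a quantity $c(a)$ that is independent of $j$. Summing over $j$ kills the antisymmetric $\zeta(a_j - a_i)$ terms. This identifies $c(a)$ with a multiple of $\sum_i \nabla G(a_i)$, up to the $\zeta(a_j)$ contribution.

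One then compares the system with the classical Hermite--Halphen criterion. Substituting $w_a$ into the Lam\'e equation and requiring the residues at $z = a_j$ to vanish gives exactly
$$
\sum_{i\ne j}\big(\zeta(a_j - a_i) + \zeta(a_i) - \zeta(a_j)\big) = 0
$$
for all $j$. By the Lemma identifying $X_n$ with the set where $w_{\pm a}$ solve the Lam\'e equation, this system is equivalent to $a \in X_n$ when $a \neq -a$, and its closure in ${\rm Sym}^n E$ minus $0^n$ is $Y_n$. So $S \in Y_n$ follows once the common constant is shown to vanish. I would obtain this from the $n-1$ relations $\sum y_i x_i^r = 0$, or equivalently directly from the residue identities.

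\textbf{Step 3 (dichotomy).} For (2), if $\rho_k = 8\pi n$ for all $k$, each $u_k$ belongs to a type II scaling family. By Theorem~\ref{t:equiv} and its source \cite{CLW}, the family is given by $f = w_a/w_{-a}$ with $a \in X_n$, and a blow-up sequence can only concentrate at the zeros or poles of $f$. Compactness of the set of solutions modulo scaling then shows that $S$ is $\{a_i\}$ or $\{-a_i\}$ for some $a \in X_n$. Such a point is not a branch point, since $w_a, w_{-a}$ are independent.

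For (1) I would use the sharp Chen--Lin type estimate
$$
\rho_k - 8\pi n = \big(c_0 + o(1)\big)\, e^{-\lambda_k}\lambda_k\,\Big(\sum_j \triangle \log h(a_j) + o(1)\Big) + \ldots,
$$
together with the refined gradient estimate at each $a_j$. The aim is to show that $\rho_k \ne 8\pi n$ forces the next-order correction to vanish in a way that is possible only if the limiting developing maps $w_a, w_{-a}$ are dependent, that is, $a = -a$.

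The main obstacle is this last implication. My first attempt would argue by contradiction. Suppose $a \in X_n$ is not a branch point. Then nondegeneracy of the linearized problem at the approximate bubble solutions, built from $f = w_a/w_{-a}$, lets a Lyapunov--Schmidt argument force $\rho_k = 8\pi n$ exactly, because the only obstruction kernel corresponds to the scaling family. This contradicts $\rho_k \ne 8\pi n$.
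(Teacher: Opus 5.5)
\textbf{Main gap: part (1).} Your argument for (1) does not close. The sharp Chen--Lin expansion only ties $\rho_k - 8\pi n$ to the bubbling rate. Here $h = e^{-\rho G}$ gives $\triangle \log h \equiv -\rho/|E|$ away from $0$, so the $\lambda_k e^{-\lambda_k}$ coefficient carries no information about the $a_j$. Nothing in the expansion detects whether $a = -a$.

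The Lyapunov--Schmidt step then assumes that at every non-branch configuration the linearized problem is nondegenerate modulo the scaling direction. That amounts to a local uniqueness theorem for bubbling solutions. It is neither proved nor available in general, and it is exactly the content you would need.

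The missing idea is evenness.
\begin{itemize}
\item[(a)] For large $k$, $\rho_k = 8\pi\eta_k$ with $2\eta_k \notin \Bbb Z$. So the local monodromy of $w'' = (\eta_k(\eta_k+1)\wp + B_k)w$ at $0$ has eigenvalues $e^{\pm 2\pi i\eta_k}$ and is not scalar.
\item[(b)] This local monodromy is the commutator $S_2^{-1}S_1^{-1}S_2S_1$ of the global monodromies. Hence the unitary projective monodromy attached to $u_k$ is non-abelian.
\item[(c)] A reducible subgroup of ${\rm SU}(2)$ preserves the orthogonal complement of its invariant line, so it is diagonal and abelian. The monodromy is therefore irreducible.
\item[(d)] By Schur, the invariant Hermitian form is unique up to scale. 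So the normalized developing map is unique up to ${\rm PSU}(2)$, and $u_k$ is determined by $B_k$ alone.
\item[(e)] Since $\wp$ is even, $u_k(-z)$ has the same Schwarzian $-2(\eta_k(\eta_k+1)\wp + B_k)$. Hence $u_k(-z) = u_k(z)$.
\end{itemize}
The blow-up set of an even sequence satisfies $S = -S$, i.e.\ $\{a_i\} = \{-a_i\}$. That is precisely a fixed point of $a \mapsto -a$ on $Y_n$, i.e.\ a branch point. No refined estimate is needed.

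\textbf{Smaller points.} In Step 2 you cannot use the relations $\sum y_i x_i^r = 0$ to kill the common constant. They characterize $X_n$, which is what you are trying to reach, so the step is circular. It is also unnecessary. Write the $j$-th balance equation as $\sum_{i \ne j}\big(\zeta(a_j - a_i) + \zeta(a_i) - \zeta(a_j)\big) = \sum_i Z(a_i)$. Summed over $j$, the entire left side vanishes, since both pieces are antisymmetric in $(i,j)$. Hence $\sum_i Z(a_i) = 0$, each left side is zero, and you obtain exactly the Hermite--Halphen system defining $Y_n$.

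The worries in Step 1 are not real issues. Each blow-up point away from $0$ carries mass exactly $8\pi$, so the $a_j$ are distinct. A coincidence $a_j = -a_i$ is not to be excluded; it is the branch case itself.

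Finally, in (2) the phrase ``compactness of the set of solutions modulo scaling'' carries all the weight. If the even representatives of the type II families degenerate, they concentrate on a set with $S = -S$, i.e.\ at a branch point. So this compactness is essentially the statement of (2). You must show that, for fixed $\tau$, the parameters $a \in X_n$ of the type II families do not accumulate at a branch point. Granting that, your zeros/poles argument for (2) is fine.
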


\begin{figure}
\renewcommand{\figurename}{Figure}
\begin{center}
\includegraphics[width=0.95\textwidth]{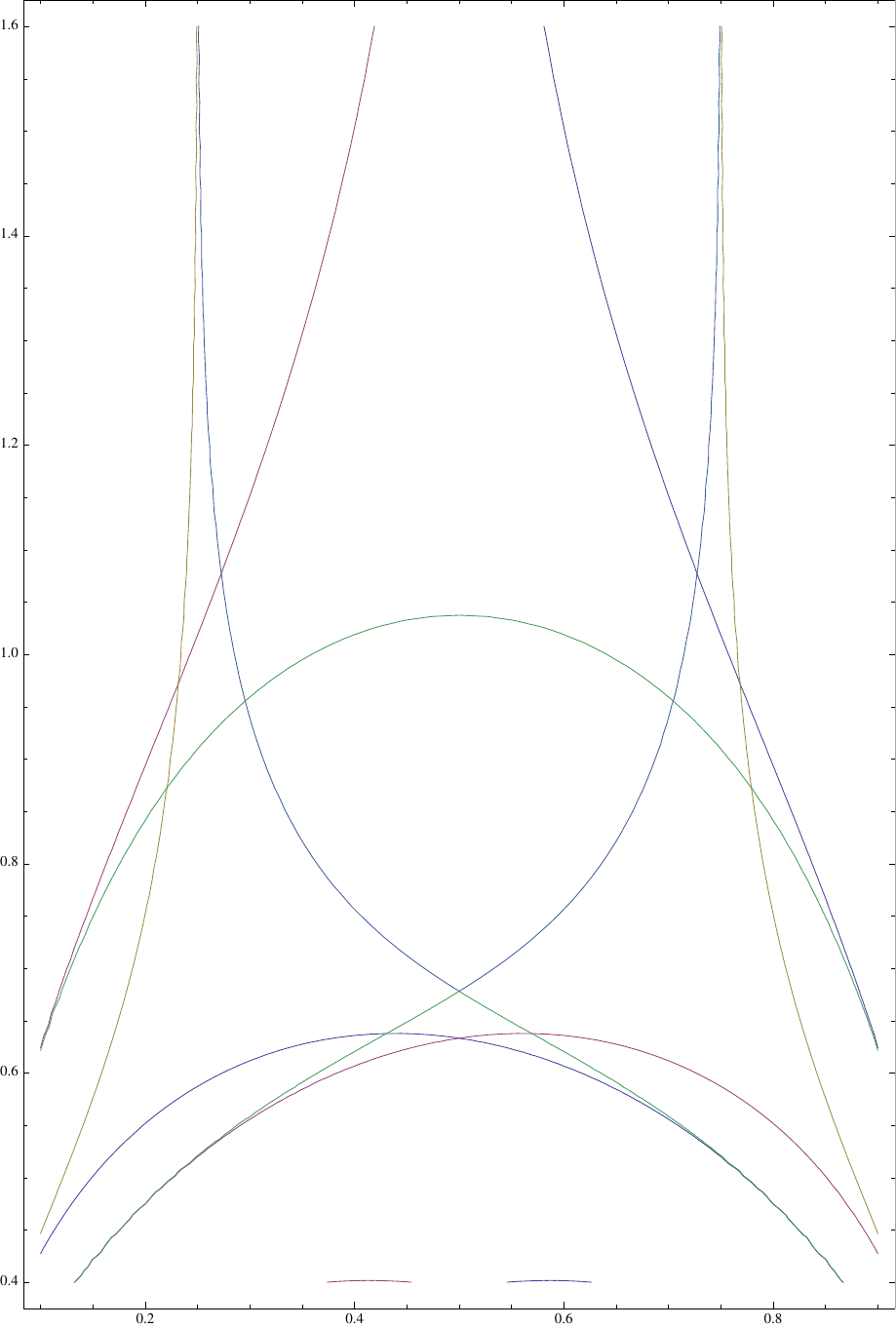}
\end{center}
\caption{$n = 2$: the degenerate curves of branch points for $\rho = 16 \pi$.}
\end{figure}

\begin{figure}
\renewcommand{\figurename}{Figure}
\begin{center}
\includegraphics[width=0.9\textwidth]{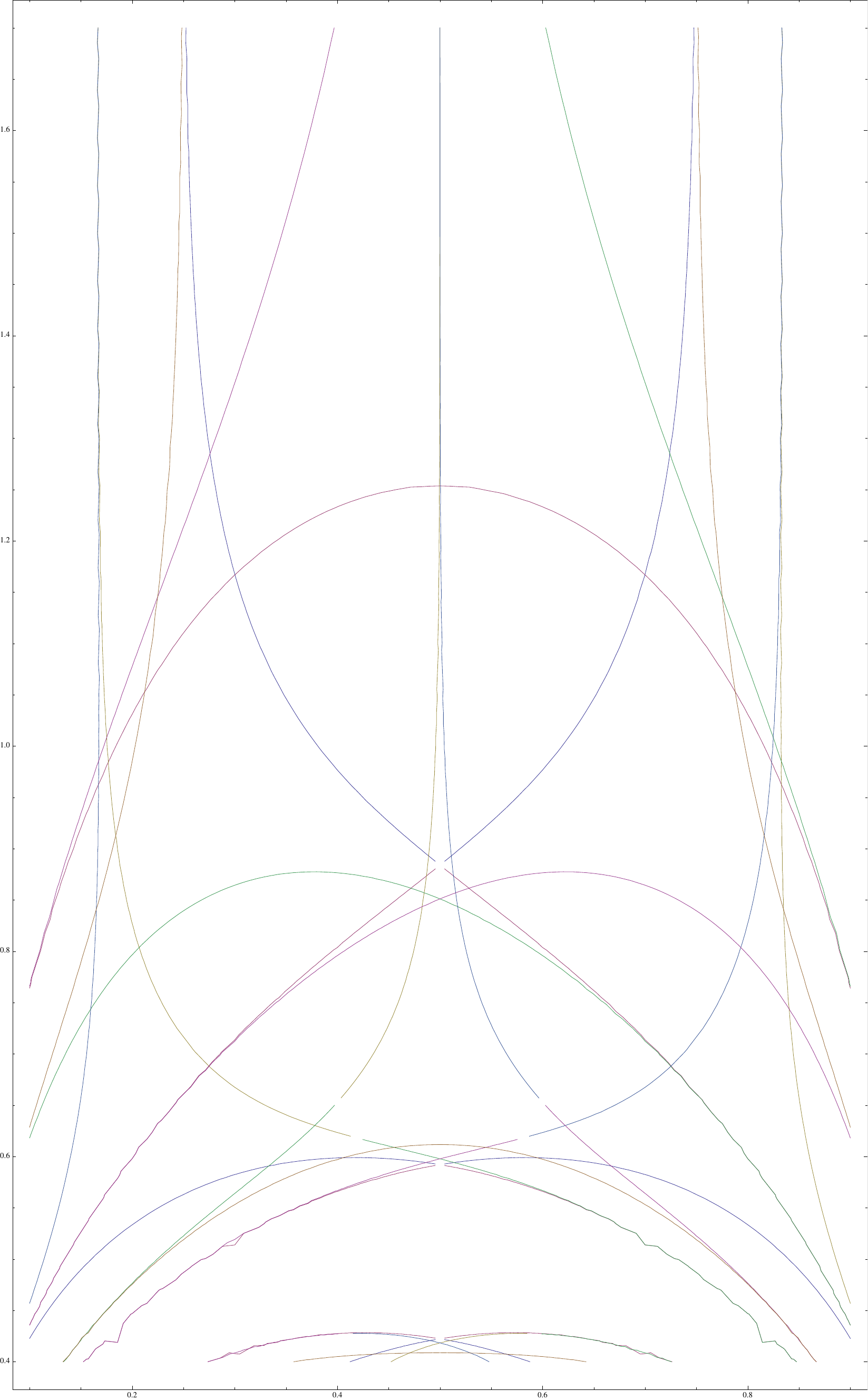}
\end{center}
\caption{$n = 3$: the degenerate curves of branch points for $\rho = 24 \pi$.}
\end{figure}

\subsubsection{Deformations in $\tau \in \mathcal{M}_{1, 1}$} For $n = 1$, the branch points are precisely the half-periods. Their degenerate loci $C$ (as degenerate critical points of the Green function) form the common boundary
$$
C := \p\Omega_3 = \p\Omega_5.
$$

For $n > 1$, similar idea applies to analyze the number of solutions of mean field equations when $\tau$ crosses the \emph{non-singular points of the degenerate curves} associated to a branch point $a \in Y_n \setminus X_n$. 

Recently it was proved in \cite{CL} that equation \eqref{e:mfe1} on \emph{rectangular tori} for $\rho = 8 n \pi$ has no solutions. Thus the chamber containing $\sqrt{-1} \Bbb R$ could be taken as the initial chamber to start the deformation argument as in \S \ref{modular}.

\section{Polynomial systems via Schwarzian derivatives} \label{s:poly-sys}
\setcounter{equation}{0}

We study equation (\ref{MFE-ln}) and its corresponding generalized Lam\'e equation (\ref{e:gLame}). The parameter $(\{A_i\}, B)$ is constrained so that the solutions to (\ref{e:gLame}) are all log-free. This is equivalent to that $f := w_1/w_2$ is log-free for one (and then any) choice of independent solutions $w_1$ and $w_2$. 

Throughout the paper we use the notations that 
$$
\wp_{ij} := \wp(p_i - p_j) \quad \mbox{and} \quad \zeta_{ij} := \zeta(p_i - p_j)
$$
whenever $i \ne j$.

\subsection{Recursive relation for logarithmic-free solutions}

The connection between equations \eqref{MFE-ln} and \eqref{e:gLame} is established via the Schwarzian derivative 
$$
S(f) := \frac{f'''}{f'} - \frac{3}{2}\Big(\frac{f''}{f'}\Big)^2 
$$ 
of the developing map $f$. Indeed, let $v = \log f'$, then
\begin{equation} \label{g-mS-der}
\begin{split}
S(f) &= v'' - \tfrac{1}{2} (v')^2 = u_{zz} - \tfrac{1}{2} u_z^2 \\
&= -2\Big(\sum_{i = 1}^{N} \eta_i (\eta_i + 1)\wp(z - p_i) + \sum_{i = 1}^{N} A_i \zeta(z - p_i) + B\Big),
\end{split}
\end{equation}
for some constants $A_i, B \in \Bbb C$ and $\eta_i = \frac{1}{2} \ell_i \in \frac{1}{2} \Bbb N$. (Since $S(f)$ is defined on $T$ and meromorphic with double poles on $S$.) The periodic constraint gives 
\begin{equation} \label{g-periodic}
F_0 := \sum_{i = 1}^{N} A_i = 0.
\end{equation}

The local expansion at $p_i$ reads as
\begin{equation} \label{g-exps}
\begin{split}
f(z) &= c_{i, 0} + c_{i, \ell_i + 1} (z - p_i)^{\ell_i + 1} + \cdots, \\
v(z) &= \log (\ell_i + 1)c_{i, \ell_i + 1} + \ell_i\log (z - p_i) + \sum_{k \ge 1} d_{i, k} (z - p_i)^k, \\
v'(z) &= \frac{\ell_i}{z - p_i} + \sum_{k \ge 0} e_{i, k} (z - p_i)^k, \quad e_{i, k} := (k + 1) d_{i, k + 1}.
\end{split}
\end{equation}

We compare the coefficients in (\ref{g-mS-der}) in this expansion. The $(z - p_i)^{-2}$ terms match automatically since $-\ell_i - \tfrac{1}{2} \ell_i^2 = -\tfrac{1}{2} \ell_i (\ell_i + 2) = -2 \eta_i (\eta_i + 1)$. 

For $(z - p_i)^{-1}$, we get 
$$
\ell_i e_{i, 0} = 2A_i \quad\Longrightarrow \quad e_{i, 0} = \frac{2A_i}{\ell_i}.  
$$

For $(z - p_i)^0$, i.e.~constant terms,
$$
e_{i, 1} - \tfrac{1}{2} 2 \ell_i e_{i, 1} - \tfrac{1}{2} e_{i, 0}^2 =  -2 \sum_{j \ne i} (\eta_j (\eta_j + 1) \wp_{ij} + \zeta_{ij} A_j) - 2B,
$$
hence
\begin{equation} \label{ei1}
(\ell_i - 1) e_{i, 1} = -2 \Big(\frac{A_i^2}{\ell_i^2} - B - \sum_{j \ne i} (\zeta_{ij} A_j + \eta_j(\eta_j + 1) \wp_{ij})\Big).
\end{equation}

If $\ell_i = 1$, this leads to a quadratic equation: 
\begin{equation} \label{g-quadratic}
A_i^2 = B + \sum_{j \ne i} \zeta_{ij} A_j + \tfrac{3}{4} \wp_{ij}.
\end{equation}
Otherwise (\ref{ei1}) detrmines $e_{i, 1}$.

Let $k \ge 1$. In general for the $(z - p_i)^k$ terms,
\begin{equation} \label{rec-k}
(k + 1 - \ell_i) e_{i, k + 1} - \tfrac{1}{2} \sum_{t = 0}^k e_{i, t} e_{i, k - t} = -2 \sum_{j} (\zeta^{(k)}_{ij} A_j + \eta_j (\eta_j + 1) \wp^{(k)}_{ij}).
\end{equation}
The right hand side is a linear polynomial $-2 L_k$ in $A_j$'s, where $h^{(k)}_{ij}$ denotes the $k$-th Taylor coefficient of $h_j(z)$ at $z = p_i$. Notice that $\wp^{(k)}_{ii} = 0$ for $k \le 1$ and $\zeta^{(k)}_{ii} = 0$ for $k \le 2$. 

\begin{lemma}
All terms in the LHS and RHS are modular functions of weight $k + 2$ if we formally assign weight one to $A_j$ and $\zeta$, and weight two to $B$ and $\wp$. 
\end{lemma}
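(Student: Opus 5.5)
The statement is a weight-homogeneity check, so the plan is to assign weights consistently and verify that every term in \eqref{rec-k} carries weight $k+2$. "Modular of weight $w$" is meant in the scaling sense. Under $\Lambda \mapsto t\Lambda$ we have
$$
\wp(tz; t\Lambda) = t^{-2}\wp(z;\Lambda), \qquad \zeta(tz;t\Lambda) = t^{-1}\zeta(z;\Lambda).
$$
So, with $z \mapsto tz$ and $p_i \mapsto tp_i$, a quantity of weight $w$ scales by $t^{-w}$. I would first record that $\wp_{ij}$ has weight $2$ and $\zeta_{ij}$ has weight $1$. The $k$-th Taylor coefficient in $(z-p_i)$ of $h(z-p_j)$ then has weight equal to the weight of $h$ plus $k$, since each power of $(z-p_i)$ carries weight $-1$. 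Hence $\wp^{(k)}_{ij}$ has weight $k+2$ and $\zeta^{(k)}_{ij}$ has weight $k+1$, including the diagonal terms $i=j$, which come from the Laurent expansion of $\wp(z-p_i)$ or $\zeta(z-p_i)$ with the polar part removed.

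Next I would fix the weights of the unknowns from \eqref{g-mS-der}. The function $S(f)$ has weight $2$, because $S(f)\,dz^2$ is invariant under the rescaling of $z$. Every summand on the right of \eqref{g-mS-der} must therefore have weight $2$. Since $\zeta$ has weight $1$, this forces $A_i$ to have weight $1$, and it forces $B$ to have weight $2$. This matches the prescription in the statement, and the constraint \eqref{g-periodic} is consistent with it.

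For the left-hand side, $v'(z) = f''/f'$ has weight $1$. Comparing with the expansion in \eqref{g-exps}, the coefficient $e_{i,k}$ of $(z-p_i)^k$ has weight $k+1$. This is consistent with $e_{i,0} = 2A_i/\ell_i$ of weight $1$ and with \eqref{ei1}, where $e_{i,1}$ has weight $2$. It then follows that:
\begin{itemize}
\item[$\bullet$] $e_{i,k+1}$ has weight $k+2$;
\item[$\bullet$] each product $e_{i,t}e_{i,k-t}$ has weight $(t+1)+(k-t+1) = k+2$;
\item[$\bullet$] on the right, $\zeta^{(k)}_{ij}A_j$ has weight $(k+1)+1 = k+2$, and $\eta_j(\eta_j+1)\wp^{(k)}_{ij}$ has weight $k+2$.
\end{itemize}

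The only subtle point is the logical order. The weights of the $e_{i,k}$ should be derived rather than assumed, and this can be done by induction on $k$ through \eqref{rec-k} itself. If $e_{i,t}$ has weight $t+1$ for all $t\le k$, then \eqref{rec-k} expresses $(k+1-\ell_i)e_{i,k+1}$ as a sum of weight-$(k+2)$ terms. Alternatively, one can take the scaling invariance of $f'$ directly: $f$ itself has weight $0$, which gives the weight of $v'$ and hence of each $e_{i,k}$ without induction. The integer coefficients $k+1-\ell_i$ and $\tfrac12$ have weight $0$, so the lemma follows.
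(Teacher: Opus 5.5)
Your proposal is correct and is essentially the paper's argument, whose whole proof is the remark that the weights follow inductively from \eqref{rec-k}; your rescaling $\Lambda\mapsto t\Lambda$, $z\mapsto tz$, $p_i\mapsto tp_i$ only makes ``modular of weight $k+2$'' precise. One small point about the induction: at $k+1=\ell_i$ the coefficient $k+1-\ell_i$ vanishes, so \eqref{rec-k} becomes the constraint $F_i=0$ and does not fix the weight of the free coefficient $e_{i,\ell_i}$, which enters every later step, and it is your alternative argument via the weight-$0$ developing map $f$ that covers this.
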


\begin{proof}
This follows inductively from (\ref{rec-k}). 
\end{proof}

To simplify notations, we denote 
$$
\tilde e_{i, s} = \tfrac{1}{2} e_{i, s} \quad \text{and} \quad \tilde A_i = \frac{A_i}{\ell_i}.
$$ 
Then $\tilde e_{i, 0} = \tilde A_i$, and (\ref{rec-k}) becomes
\begin{equation} \label{rec-k/2}
(\ell_i - (k + 1)) \tilde e_{i, k + 1} = - \sum_{t = 0}^k \tilde e_{i, t} \tilde e_{i, k - t} + L_k.
\end{equation}
For example,  
\begin{equation*}
\begin{split}
(\ell_i - 1) \tilde e_{i, 1} &= -(\tilde A_i^2 - B - L_0),\\
(\ell_i - 2) \tilde e_{i, 2} &= \frac{2}{\ell_i - 1} \tilde A_i ( \tilde A_i^2 - B - L_0) + L_1, \\
(\ell_i - 3) \tilde e_{i, 3} &=  -\frac{2}{\ell_i - 2} \frac{2}{\ell_i - 1} \tilde A_i^2 ( \tilde A_i^2 - B - L_0) - \frac{2}{\ell_i - 2} \tilde A_i L_1\\
& \qquad - \frac{1}{(\ell_i - 1)^2}( \tilde A_i^2 - B - L_0)^2 + L_2.
\end{split}
\end{equation*}
This leads to a polynomial equation 
$$
F_i := c_i A_i^{\ell_i + 1} + (\text{lower degree})= 0.
$$

\subsection{Projective completion and analysis at infinity}

For the projective completion in $\Bbb P^{N + 1} \ni [A_0: A_1: \cdots : A_N: B]$, the solution at infinity $A_0 = 0$ again leads to $A_i = 0$ for all $i$ and with $B$ free. Thus there is only one such point $Q = [0:\cdots:0:1]$. The B\'ezout degree is
$$
b_L := \prod_{i = 1}^N (\ell_i + 1).
$$

If we assume that $\ell = \sum \ell_i$ is odd, then some $\ell_i$ must also be odd and hence $b_{\vec \ell}$ is even. We will show that $Q$ is an isolated solution of the homogenized system with multiplicity $b_L/2$.

To proceed, we notice that for the extreme case that there is only one singular source at $p = p_1$, $N = 1$ and $\ell = \ell_1 = 2n + 1$, we must have $A_1 = 0$ (the linear constraint) and hence $F = F_1$ reduces to the Brioschi--Halphen polynomial $p_n(B)$ which has degree $n + 1 = \tfrac{1}{2}(\ell_1 + 1)$.

Now comes the key point: to study the highest order terms by assigning order 1 to $A_j$'s and order 2 to $B$, we may mod out the lower order terms $L_k$'s in the recursive relation. For $F_i$, only $A_i$ and $B$ are left in the top order.

\begin{example}
For small values of $\ell_i \in \Bbb N$, the dominant term $q_{\ell_i}(\{A_i\}, B)$ with order $\ell_i + 1$ of the polynomial $F_i$ is given by
\begin{equation}
\begin{split}
q_0 &= A, \\
q_1 &= -(A^2 - B),\\
q_2 &= \frac{1}{(2!)^{2}} A(A^2 - 2^2 B),\\
q_3 &= \frac{-1}{(3!)^2} (A^2 - B) (A^2 - 3^2 B), \\
q_4 &= \frac{1}{(4!)^2} A(A^2 - 2^2 B) (A^2 - 4^2 B), \\
&\cdots
\end{split}
\end{equation}
as can be checked by brute force calculations.
\end{example}

\begin{lemma} \label{top-term-lem}
Let $\ell \in \Bbb N$, $\ell \ge 2$, $\ell \tilde e_0 = A$, $(\ell - 1)\tilde e_1 = -(A^2/\ell^2 - B)$, and 
\begin{equation} \label{red-rec}
(\ell - (k + 1)) \tilde e_{k + 1} = -\sum_{t = 0}^k \tilde e_t \tilde e_{k - t}, \quad 1 \le k \le \ell - 1. 
\end{equation}
Then the critical case $k = \ell - 1$ gives 
$$
q_\ell = -\sum_{t = 0}^{\ell - 1} \tilde e_t \tilde e_{\ell - 1 - t} = \frac{(-1)^l}{(\ell!)^2} \prod_{j = 0}^\ell (A - (\ell - 2j) B^{1/2}). 
$$
\end{lemma}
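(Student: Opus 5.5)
The plan is to recognise the truncated recursion \eqref{red-rec} as the \emph{exact} log-free recursion of a model ODE with an irregular singularity. Then the vanishing locus of $q_\ell$ can be read off from classical confluent hypergeometric theory, and only the normalising constant needs a separate computation.

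\emph{Step 1: the model equation.} Consider the local equation at $z=0$
$$
w'' - \Big(\eta(\eta+1)\frac{1}{z^2} + \frac{A}{z} + B\Big) w = 0, \qquad \eta = \tfrac{1}{2}\ell.
$$
Put $f=w_1/w_2$, $v=\log f'$ and $v'=\ell/z+\sum_k 2\tilde e_k z^k$. The Schwarzian identity $v''-\frac12 v'^2=-2(\eta(\eta+1)z^{-2}+Az^{-1}+B)$ then produces, coefficient by coefficient, exactly the relations of the lemma: $\ell\tilde e_0=A$, $(\ell-1)\tilde e_1=-(A^2/\ell^2-B)$, and \eqref{red-rec} with every $L_k=0$. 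This is the same computation as \eqref{rec-k/2}, with the potential replaced by its top-order part.

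At $k=\ell-1$ the coefficient $\ell-(k+1)$ vanishes, so $q_\ell$ is precisely the obstruction to continuing the expansion. Hence $q_\ell(A,B)=0$ if and only if the model equation is log-free at $0$. By construction $\tilde e_k$ is weighted homogeneous of degree $k+1$ in $(A,B^{1/2})$. One also checks that $q_\ell$ is a polynomial in $A$ and $B$, and that it is even or odd in $A$ according to the parity of $\ell+1$ (the substitution $A\to -A$ corresponds to $z\to -z$). So $q_\ell$ is a homogeneous form of degree $\ell+1$ in $(A,B^{1/2})$.

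\emph{Step 2: the Kummer/Whittaker reduction.} For $B\neq 0$, substitute $x=2B^{1/2}z$. This turns the model into Whittaker's equation $W''+(-\frac14+\kappa/x+(\frac14-\mu^2)/x^2)W=0$ with $\mu=\eta+\frac12=\frac{\ell+1}{2}$ and $\kappa=-A/(2B^{1/2})$. Its solutions are $x^{1/2\pm\mu}e^{-x/2}M(\frac12\pm\mu-\kappa,1\pm 2\mu,x)$.

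Since $1-2\mu=-\ell$ is a nonpositive integer, the second Kummer series is defined, and the equation is log-free, exactly when the numerator parameter $a=\frac12-\mu-\kappa$ lies in $\{0,-1,\dots,-\ell\}$. Equivalently $M(a,-\ell,x)$ truncates before the bad denominator. This gives $\kappa\in\{-\frac{\ell}{2}+j: 0\le j\le\ell\}$, i.e. $A\in\{(\ell-2j)B^{1/2}: 0\le j\le\ell\}$.

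These are $\ell+1$ distinct values, as many as the degree of $q_\ell$ in $A$, provided the $A^{\ell+1}$ coefficient is nonzero. Therefore $q_\ell=c_\ell\prod_{j=0}^{\ell}(A-(\ell-2j)B^{1/2})$. One point needs care: log-freeness of $f$ is the same as log-freeness of the equation for $w$, since $f=w_1/w_2$.

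\emph{Step 3: the constant.} It remains to show $c_\ell=(-1)^\ell/(\ell!)^2$, and in particular that $c_\ell\neq0$. Setting $B=0$, the recursion becomes $\tilde e_k=\beta_kA^{k+1}$ with $\beta_0=1/\ell$, $(\ell-1)\beta_1=-1/\ell^2$ and $(\ell-k-1)\beta_{k+1}=-\sum_t\beta_t\beta_{k-t}$. Then $c_\ell=-\sum_{t=0}^{\ell-1}\beta_t\beta_{\ell-1-t}$.

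I expect this evaluation to be the main obstacle. I would first try to guess a closed form for $\beta_k$ from small $\ell$ (it should be a product of binomial-type factors, consistent with the table $q_0,\dots,q_4$) and prove it by induction via a Cauchy-product identity. As a fallback, I would compute $c_\ell$ from the explicit log-free solution of the $B=0$ model: this is a Bessel-type equation in $\sqrt{Az}$, with $w_1=z^{\eta+1}(1+\cdots)$ and $w_2=z^{-\eta}\sum_{m}\frac{A^m z^m}{m!(-\ell)_m}$. The residue that obstructs the $m=\ell+1$ term yields $\prod_{m\le\ell}m(m-\ell-1)$, which should produce the factor $(-1)^\ell/(\ell!)^2$.
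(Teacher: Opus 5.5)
Your proof is correct, and it takes a genuinely different route from the paper.

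\textbf{The paper's route.} The paper replaces $\ell$ by a complex parameter $s$. It identifies the generating series $\hat w(z,s)$ of the universal recursion with $g'/g$, where $g$ is the Kummer solution (exponent $-s/2$) of the confluent hypergeometric equation. It then reads off $q_\ell=-{\rm res}_{s=\ell}\,\hat e_\ell(s)$. Since only the factor $\Gamma(-s)/\Gamma(-s+\ell+1)$ is singular at $s=\ell$, one residue computation gives both the roots and the constant $(-1)^\ell/(\ell!)^2$.

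\textbf{Your route.} You stay at the fixed integer $\ell$ and read $q_\ell$ as the logarithmic obstruction of the model equation. You get its zero set from the classical criterion that $M(a,-\ell,x)$ must terminate, and conclude by weighted homogeneity and a root count. This explains conceptually why the roots are $(\ell-2j)B^{1/2}$ (they are exactly the log-free Whittaker parameters) and avoids the continuation in $s$. The price is a separate normalization step. Note that $c_\ell\neq0$ is automatic from your Steps 1--2. If the $A^{\ell+1}$ coefficient vanished, $q_\ell(\cdot,B)$ would have degree at most $\ell$ but $\ell+1$ roots, hence vanish identically. Then every $A$ would be log-free, contradicting Step 2.

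\textbf{What Step 3 still needs.} You leave at ``should produce'' the claim that the linear Frobenius obstruction equals $q_\ell$ exactly, not just up to a factor. This takes one line.
\begin{itemize}
\item Write $y=(f')^{-1/2}=z^{-\eta}G$ with $G=\sum_{m\le\ell}d_mz^m$, $d_0=1$, and set $u=y'/y=-\eta/z-\sum_k\tilde e_kz^k$.
\item Then $(y''-Qy)/y=u'+u^2-Q$. The $z^{\ell-1}$ coefficient of the right side is $\sum_t\tilde e_t\tilde e_{\ell-1-t}=-q_\ell$.
\item The left side begins with $-(Ad_\ell+Bd_{\ell-1})z^{\ell-1}$, because $y=z^{-\eta}(1+O(z))$.
\item Hence $q_\ell=Ad_\ell+Bd_{\ell-1}$ on the nose. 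At $B=0$ your $d_m=A^m/(m!(-\ell)_m)$ gives $q_\ell=(-1)^\ell A^{\ell+1}/(\ell!)^2$.
\end{itemize}
The same identity, applied after the gauge change $y=x^{-\eta}e^{-x/2}\phi$, even yields the full product with its constant in one stroke, without the root count.

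Your first idea for Step 3, a closed form for the $\beta_k$, can be dropped. For $\ell=4$ one already finds $\beta_3=-7/2304$, so no simple product formula should be expected.
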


Our proof of this elementary looking lemma turns out to be technical and is deferred to the next subsection. A purely combinatorial proof is highly expected though we are unable to find one at this moment.

\begin{corollary}
If $\ell = \sum_{i = 1}^N \ell_i$ is odd, then $Q$ is an isolated point at infinity. Hence all the solutions $(\{A_i\}, B)$'s are discrete points.
\end{corollary}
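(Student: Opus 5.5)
To prove the corollary, I will study the homogenized system in $\Bbb P^{N+1}$ with coordinates $[A_0:A_1:\cdots:A_N:B]$ and analyze the locus $A_0=0$. Weight one is assigned to $A_j$ and $A_0$, and weight two to $B$. The equations are $F_0=\sum A_i$, which is linear, and the $F_i$, which have weighted degree $\ell_i+1$. By Lemma \ref{top-term-lem}, the weighted top-degree part of $F_i$ is
\[
q_{\ell_i}(A_i,B)=\frac{(-1)^{\ell_i}}{(\ell_i!)^2}\prod_{j=0}^{\ell_i}\bigl(A_i-(\ell_i-2j)B^{1/2}\bigr).
\]
Here the lower-order terms $L_k$ only contribute lower weights. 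For $\ell_i=1$ the top part is $-(A_i^2-B)$, read off from \eqref{g-quadratic}, which agrees with the formula.

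The key step is to show that the top-degree system
\[
\sum_i A_i=0,\qquad q_{\ell_i}(A_i,B)=0 \quad (1\le i\le N)
\]
has only the trivial solution $A_i=0$, $B=0$, and I will argue by contradiction. Suppose there were a nontrivial solution. If $B=0$, each $q_{\ell_i}$ reduces to a constant times $A_i^{\ell_i+1}$, so every $A_i=0$, giving the trivial solution. So $B\neq0$. Fix a square root $\beta=B^{1/2}\neq0$. Then each $A_i=m_i\beta$ with $m_i\in\{\ell_i,\ell_i-2,\dots,-\ell_i\}$, and in particular $m_i\equiv\ell_i \pmod 2$. The linear equation gives $\sum m_i=0$. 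However,
\[
\sum m_i\equiv\sum\ell_i=\ell \equiv 1 \pmod 2,
\]
which is a contradiction. Hence the top-degree system has no nonzero solutions.

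Translating this back to the projective picture, the only point at infinity is $Q=[0:\cdots:0:1]$, as noted before the statement. The parity argument above is for the weighted leading forms. To conclude that $Q$ is isolated in the full homogenized system, I will use the standard fact for quasi-homogeneous systems: if the weighted leading forms have only the trivial common zero, then the affine solution set has no component that is unbounded in the weighted sense. In other words, the affine zero set $V$ is a proper variety that is finite over a point, so $V$ is finite and $Q$ is an isolated point of the compactified system.

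To make this concrete, I will suppose $V$ contains a curve and take a point of it tending to infinity along an analytic branch. Rescale by $A_i=t^{-1}a_i(t)$ and $B=t^{-2}b(t)$, choosing the normalization so that $(a(t),b(t))\to(a,b)\neq0$. Multiplying $F_i$ by $t^{\ell_i+1}$ and letting $t\to0$ gives $q_{\ell_i}(a,b)=0$ and $\sum a_i=0$, which the parity argument rules out. I expect the main care to lie in this weighted rescaling rather than in the elementary parity step: the weights are unequal, and the linear constraint must be homogenized consistently with them.

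The conclusion is that all solutions $(\{A_i\},B)$ are discrete points, and that $Q$ is an isolated point at infinity.
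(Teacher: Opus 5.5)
Your proposal is correct and follows essentially the paper's argument: both reduce to the weighted leading forms $q_{\ell_i}$ from Lemma \ref{top-term-lem} and the parity obstruction $\sum m_i \equiv \ell \equiv 1 \pmod 2$. The paper works in the chart $x_i = A_i/B$ at $Q$, where the dominant terms are $\prod_j (x_i - (\ell_i - 2j)x_0^{1/2})$; your weighted rescaling along a branch escaping to infinity is the same computation seen from the affine side, and it makes the paper's informal ``$x_i \sim \mu_i x_0^{1/2}$'' step precise (including the case $b=0$).
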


\begin{proof}
Consider the chart $B \ne 0$ and let $x_i = A_i/B$ for $i = 0, \ldots, N$. The dominant polynomials at $Q$ are given by
\begin{equation}
\begin{split}
\tilde f_0(x) &= \sum_{i = 1}^N x_i, \\
\tilde f_i(x) &= \prod_{j = 0}^{\ell_i} (x_i - (\ell_i - 2j) x_0^{1/2}), \quad 1 \le i \le N.
\end{split}
\end{equation}
This forces that $x_i \sim \mu_i x_0^{1/2}$ with $\mu_i \equiv \ell_i \pmod 2$. Then
$$
\sum_{i = 1}^N x_i \sim \sum_i \mu_i x_0^{1/2} \equiv \sum \ell_i x_0^{1/2} = \ell x_0^{1/2}\not\equiv 0
$$
since $\ell$ is odd.
\end{proof}

\begin{theorem} [Algebraic degree counting formula] \label{t:degree}
For $\ell = \sum_{i = 1}^N \ell_i$ being odd, the log-free parameters $(\{A_i\}, B)$ consist of a discrete set whose cardinality, counted with multiplicity, is half of the projective B\'ezout degree. Namely,
$$
a_L = \tfrac{1}{2} b_L = \tfrac{1}{2} \prod_{i = 1}^N (\ell_i + 1).
$$ 
\end{theorem}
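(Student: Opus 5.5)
The count is affine Bézout minus the contribution at infinity. Homogenize the system $F_0 = \sum_i A_i$, $F_1, \dots, F_N$ in $\Bbb P^{N+1}$ with coordinates $[A_0 : A_1 : \cdots : A_N : B]$, assigning weight $1$ to the $A_j$ and to $A_0$, and weight $2$ to $B$. Each $F_i$ has weight $\ell_i + 1$ and $F_0$ has weight $1$. The Bézout number of the (weighted) projective system is $b_L = \prod_i (\ell_i + 1)$. By the preceding Corollary, the only solution on $A_0 = 0$ is the isolated point $Q = [0:\cdots:0:1]$, so the affine solutions are finite. Their number counted with multiplicity is
$$a_L = b_L - \mu_Q,$$
where $\mu_Q$ is the intersection multiplicity at $Q$. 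The whole task is therefore to show $\mu_Q = \tfrac12 b_L$.

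To compute $\mu_Q$, work in the chart $B = 1$ with local coordinates $x_0 = A_0$ (after the weighted rescaling $x_i = A_i/B^{1/2}$, $x_0 = A_0/B^{1/2}$; one could equally pass to the double cover $x_0 = t^2$ to clear the square root). By Lemma \ref{top-term-lem}, the leading forms at $Q$ are $\tilde f_0 = \sum x_i$ and
$$\tilde f_i = \prod_{j=0}^{\ell_i} \bigl(x_i - (\ell_i - 2j)\,x_0\bigr), \qquad 1 \le i \le N,$$
where lower-order corrections involve higher powers of $x_0$. Next I would check that the leading-form system has only the trivial common zero. Setting $x_0 = 1$ projectively, this forces $x_i = \mu_i \in \{\ell_i, \ell_i - 2, \dots, -\ell_i\}$ with $\sum \mu_i = 0$, which is impossible because $\sum \mu_i \equiv \ell$ is odd. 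The same is true after substituting $x_0 = 0$, where all $x_i = 0$. Hence the tangent cone at $Q$ meets the exceptional divisor trivially. Granting this nondegeneracy, the multiplicity equals the product of the degrees of the leading forms, computed in the $(x_0, x_i)$ coordinates.

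I need to be careful here. Naively that product is $\prod(\ell_i + 1) = b_L$, which would leave nothing affine. So the correct comparison must take the weighting into account. In the homogeneous coordinate $A_0$ (weight $1$) versus $B$ (weight $2$), the local parameter at $Q$ is really $x_0 = A_0^2/B$-type, and the half-integral power $B^{1/2}$ means the honest local ring at $Q$ is the $\Bbb Z/2$-quotient of the ring in $(t, x_i)$ with $t^2 = x_0$.

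My plan is therefore the following. Compute the intersection multiplicity on the double cover in the variables $(t, x_1, \dots, x_N)$. There the leading forms are homogeneous of degrees $1, \ell_1 + 1, \dots, \ell_N + 1$, with no common nontrivial zero by the parity argument, so the multiplicity upstairs is $b_L$. Then divide by $2$, the degree of $t \mapsto t^2$, to get $\mu_Q = b_L/2$. Combined with the global weighted Bézout number (which I would recompute as $b_L$ for the unweighted homogenization actually used, so that $Q$ absorbs exactly half), this gives $a_L = \tfrac12 b_L$.

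The main obstacle is making this bookkeeping rigorous: matching the weighted homogenization with the ordinary Bézout count, and justifying that multiplicity at $Q$ equals half the product of leading degrees. For that I would use the standard fact that a system whose leading forms (with respect to a weighting) have only the trivial common zero has multiplicity equal to the weighted product of degrees divided by the weights, which is a Kouchnirenko/Bernstein-type statement. I would try this first on the case $N = 1$ as a sanity check: there it gives $(\ell_1 + 1)/2 = n + 1$, matching Brioschi--Halphen.
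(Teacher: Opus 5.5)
Your final plan is correct, and its global architecture matches the paper's:
\begin{itemize}
\item ordinary homogenization in $\Bbb P^{N+1}$ with B\'ezout number $b_L$;
\item the single point $Q$ at infinity;
\item the weighted leading forms $\sum x_i$ and $q_{\ell_i}(x_i,x_0)$ at $Q$ from Lemma~\ref{top-term-lem}, with $x_0$ of weight $2$;
\item the parity argument for isolatedness.
\end{itemize}

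You differ in how you compute $\mu_Q$. The paper fixes an odd $\ell_N$ and pairs the factors of $\tilde f_N$ into the $\tfrac12(\ell_N+1)$ smooth branches $x_N^2=\mu^2x_0$ with $\mu$ odd. It then substitutes $x_0=(x_N/\mu)^2$ and counts one transverse linear intersection for each choice of factors $L_{i,j_i}$ of $Q_i$, $i<N$.

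You instead base-change along $x_0=t^2$. Upstairs the initial forms $\sum x_i$ and $\prod_j(x_i-(\ell_i-2j)t)$ are ordinary homogeneous of degrees $1,\ell_1+1,\dots,\ell_N+1$, with only the trivial common zero. So the multiplicity upstairs is the product of the orders, namely $b_L$. As modules via $x_0=t^2$, $\Bbb C\{t,x\}=\Bbb C\{x_0,x\}\oplus t\,\Bbb C\{x_0,x\}$ is free of rank $2$. Hence $\Bbb C\{t,x\}/I\,\Bbb C\{t,x\}\cong(\Bbb C\{x_0,x\}/I)^{\oplus 2}$, which gives $\mu_Q=\tfrac12 b_L$ exactly. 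Write the halving this way, rather than via ``$\Bbb Z/2$-quotients'' or a Kouchnirenko-type citation.

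The two versions buy different things:
\begin{itemize}
\item Yours is symmetric in the $\ell_i$ and absorbs the higher-order terms of the $f_i$ automatically.
\item The paper's branch count tacitly assumes that $\mu_Q$ equals the multiplicity of the leading-form system and is additive over branches.
\item In return, the paper's version displays the $\tfrac12 b_L$ local linear pieces explicitly, which \S\ref{IFT} reuses to detect non-transversality when $\ell$ is even.
\end{itemize}

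Clean up your first two paragraphs, which mix two different compactifications.
\begin{itemize}
\item In the weighted homogenization in $\Bbb P(1,\dots,1,2)$, the B\'ezout number is $\prod d_i/\prod w_j=\tfrac12 b_L$, not $b_L$.
\item $Q$ is not a solution there, since $q_{\ell_i}(0,1)\neq 0$ for odd $\ell_i$.
\item In the chart $x_i=A_i/B^{1/2}$, the forms $\prod_j(x_i-(\ell_i-2j)x_0)$ are not the local equations at $Q$. They are your double-cover forms with $t$ renamed $x_0$.
\end{itemize}

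Done consistently, the weighted route is actually the shortest proof. By your parity argument, the weighted top forms $\sum A_i$ and $q_{\ell_i}(A_i,B)$ have only the trivial common zero. Hence they form a regular sequence generating the ideal of top-degree forms of $(F_0,\dots,F_N)$ in $\Bbb C[A,B]$, graded by weights $(1,\dots,1,2)$. Then $\dim_{\Bbb C}\Bbb C[A,B]/(F_0,\dots,F_N)$ is the value at $T=1$ of $(1-T)\prod_i(1-T^{\ell_i+1})\big/\big((1-T)^N(1-T^2)\big)$, namely $\tfrac12 b_L$. There is nothing at infinity to subtract.
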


\begin{proof}
Since the projective B\'ezout degree is given by $\prod_{i = 1}^N \deg F_i = \prod_{i = 1}^N (\ell_i + 1)$, it remains to show that the isolated point $Q$ has multiplicity given by half of it. 

To compute the multiplicity at $Q$ in the case with $\ell = \sum \ell_i$ being odd, without loss of generality we may assume that $\ell_N = 2n_N + 1$. The equation $\tilde f_N = 0$ leads to $x_N^2 = \mu^2 x_0$ for some odd integer $1 \le \mu \le \ell_N$. There are $n_N + 1 = \tfrac{1}{2}(\ell_N + 1)$ such choices.

For each choice, we substitute $x_0 = (x_l/\mu)^2$ throughout equations $\tilde f_i(x) = 0$, which is indeed a function in $x_0$ instead of in $x_0^{1/2}$, for $i = 1, \ldots, N - 1$ to get (up to a non-zero constant multiple)
$$
Q_i(x_i, x_N) = \prod_{j_i = 0}^{\ell_i} \Big(x_i - \frac{\ell_i - 2j_i}{\mu} x_N\Big) =: \prod_{j_i = 0}^{\ell_i} L_{i, j_i}(x_i, x_N).
$$

It is clear that the intersection multiplicity at $Q$ is given by the sum of multiplicities of the intersection 
$$
\bigcap\nolimits_{i = 1}^{N - 1} (L_{i, j_i} = 0)
$$ 
among all choices of linear factors $L_{i, j_i}$ in $Q_i$ parametrized by the $N - 1$ vector $\vec j = (j_1, \ldots, j_{N - 1})$. The \emph{linear intersection} is necessarily transversal since $Q$ is isolated. Hence it contributes multiplicity 1.

Finally, the total number of choices is given by 
$$
\tfrac{1}{2} (\ell_N + 1) \times \prod_{i = 1}^{N - 1} (\ell_i + 1) = \tfrac{1}{2} \prod_{i = 1}^N (\ell_i + 1).
$$
This proves the theorem.
\end{proof}

\subsection{Proof of Lemma \ref{top-term-lem}}

There are two ideas involved in the proof. The first is to go back to the non-linear ODE (\ref{g-mS-der}) coming from the Schwarzian derivative which leads to the recursive relation (\ref{rec-k}). The key point is that now we only focus on one singular point, called it $z = 0$, and we throw away all lower order terms $L_k$'s. Thus by ``defining'' $e_i = 2\tilde e_i$ and 
$$
w(z) =  \frac{\ell}{z} + \sum_{k \ge 0} e_{k} z^k,
$$ 
the recursive relation (\ref{red-rec}) ``should'' corresponds to the ODE
$$
w' - \tfrac{1}{2} w^2 = -2\Big(\frac{\ell(\ell+ 2)}{4 z^2} + \frac{A}{z} + B\Big).
$$
The problem is that for any given $\ell \in \Bbb N$, $e_k$ is defined only up to $k \le \ell - 1$ and knowing the finite Laurent polynomial $w(z)$ does not gives $q_\ell$ in a direct manner. 

Now comes the second idea. In order to read out $q_\ell$ from $w(z)$, we replace $\ell$ by a complex variable $s \in \Bbb C$ and define $e_k(s)$ and $\hat e_k(s) = -e_i(s)/2$ by the extended \emph{universal} recursive relation (the reason to introduce the additional minus sign will be clear soon)
\begin{equation}
\hat e_{k + 1}(s) = \frac{1}{s - (k + 1)} \sum_{t = 0}^k \hat e_t(s) \hat e_{k - t}(s), \qquad k \ge 1,
\end{equation}
with initial conditions 
$$
\hat e_0(s) = -\frac{A}{s} \quad \text{and} \quad \hat e_1(s) = \frac{1}{s - 1} \Big( \frac{A^2}{s^2} - B\Big).
$$
In such a manner we see that $\hat e_l(s)$ has a simple pole at $s = \ell$ and by its very definition $-q_\ell$ is precisely the residue
$$
q_\ell = -{\rm res}_{s = \ell}\, \hat e_\ell(s).
$$

For example, we calculate
$$
\hat e_2(s) = \frac{-1}{(s - 2)(s - 1)} \frac{A^2}{s^2} \Big( \frac{A^2}{s^2} - B\Big)
$$
and
$$
\hat e_3(s) = \frac{5s - 6}{(s - 3)(s - 2)(s - 1)^2 s^4} (A^2 - s^2 B) \Big( A^2 - \frac{s - 2}{5s - 6} B \Big).
$$
However, it is unclear what the general formula for $\hat e_k(s)$ should be.

As usual, we study its corresponding generating function
\begin{equation} \label{w-hat}
\hat w(z, s) =  \frac{-s}{2} z^{-1} + \sum_{k \ge 0} \hat e_{k}(s) z^k.
\end{equation}
Then $\hat w$ satisfies the following ODE in the $z$ variable:
$$
\hat w' + \hat w^2 = \frac{s(s+ 2)}{4 z^2} + \frac{A}{z} + B.
$$
This takes the form of a standard \emph{Riccati equation} and it can be transformed into a linear equation in 
$$
g(z, s) = \exp \int^z \hat w, \quad \text{i.e.}\quad \hat w = (\log g)' = \frac{g'}{g}.
$$
Indeed, 
$$
\hat w' = \frac{g''}{g} - \frac{(g')^2}{g^2} = \frac{g''}{g} - \hat w^2,
$$
hence we arrive at the following \emph{confluent hypergeometric equation} (CHG):
\begin{equation}
g'' - Q(z) g = 0, \quad \text{where} \quad Q(z) := \frac{s(s+ 2)}{4 z^2} + \frac{A}{z} + B.
\end{equation}

The Whittaker standard form of CHG equation is (c.f.~\cite{Whittaker}, Ch.XVI)
$$
W'' - \Big( \frac{4m^2 - 1}{4 z^2} - \frac{k}{z} + \frac{1}{4}\Big) W = 0.
$$
The Kummer solutions $M_{k, \pm m}(z)$ around $z = 0$ are given by
$$
M_{k, m}(z) = e^{-\frac{1}{2} z} z^{\frac{1}{2} + m} \sum_{n = 0}^\infty \frac{(-1)^n}{n!} z^n F(\tfrac{1}{2} + m - k, -n, 2m + 1; 1),
$$
where the value of the Gauss hypergeometric function at $1$ is
$$
F(a, b, c; 1) = \frac{\Gamma(c) \Gamma(c - a - b)}{\Gamma(c - b) \Gamma(c - a)}.
$$

Now we set $s = -(2m + 1)$, $A = -k$ and $B = \tfrac{1}{4}$. Then
$$
g(z, s) = e^{-\frac{1}{2} z} z^{-\frac{1}{2}s} \sum_{n = 0}^\infty \frac{(-1)^n}{n!} z^n \frac{\Gamma(-s) \Gamma(k -\tfrac{1}{2}s + n)}{\Gamma(-s + n) \Gamma(k -\tfrac{1}{2}s)}.
$$
Denote by $g_1$ the power series part. Then
$$
\hat w = \frac{g'}{g} = -\frac{1}{2} - \frac{s}{2} z^{-1} - \frac{1}{g_1} \sum_{n = 0}^\infty \frac{(-1)^n}{n!} z^n \frac{\Gamma(-s) \Gamma(k -\tfrac{1}{2}s + n + 1)}{\Gamma(-s + n + 1) \Gamma(k -\tfrac{1}{2}s)}.
$$
To read out the residue of the $z^\ell$ coefficient $\hat e_\ell(s)$, we only need to consider those $z^j$ terms in $g_1$ and $g_1'$ with $j \le \ell$. Notice that 
$$
\frac{\Gamma(-s)}{\Gamma(-s + j + 1)} = \frac{1}{(-s + j)(-s + j - 1) \cdots (-s)}
$$
is regular at $s = \ell$ unless $j = \ell$ which gives rise to a simple pole. Hence the residue is given by
\begin{equation}
\begin{split}
&-\frac{(-1)^\ell}{\ell!} \frac{(-1)^\ell}{\ell!} \frac{\Gamma(k + \frac{1}{2}\ell + 1)}{\Gamma(k - \frac{1}{2}\ell)} \\
&\quad = \frac{-1}{(\ell!)^2} \prod_{j = 0}^\ell (k + \tfrac{1}{2}(\ell - 2j) ) = \frac{(-1)^\ell}{(\ell!)^2} \prod_{j = 0}^\ell (A - \tfrac{1}{2}(\ell - 2j) ).
\end{split}
\end{equation}
This is exactly the proposed formula for $q_\ell$ since now $B = \tfrac{1}{4}$. Also it is clear that under scaling it is enough to prove Lemma \ref{top-term-lem} for any particular non-zero value of $B$. Thus the proof is complete. \smallskip

The statement that each solution $(\{A_i\}, B)$ indeed gives a type I solution to equation \eqref{MFE-ln} will be proved in Corollary \ref{c:type-I} after we develop some basic monodromy theory of equation \eqref{e:gLame}.

\subsection{The primitive case $\ell = N$}

Now we consider the primitive case of equation \eqref{MFE-ln}, namely $\ell_i = 1$ for all $i = 1, \ldots, N$, and so $\ell = N$: 
\begin{equation} \label{MFE-l}
\triangle u + e^u = 4\pi \sum_{i = 1}^{N} \delta_{p_i} \quad \text{on $T$}.
\end{equation}

We first summarize the derivation of the precise polynomial system for any $\ell \in \Bbb N$ and add a few remarks on it. 

When $\ell = 2n + 1$, we have seen that $d_\ell = 2^{2n}$. The case $\ell = 2n$ will be analyzed in more details in later sections. 

Let $f$ be the developing map of $u$ and let $v = \log f'$. Then as an elliptic function
\begin{equation} \label{mS-der}
\begin{split}
S(f) &= v'' - \tfrac{1}{2} (v')^2 = u_{zz} - \tfrac{1}{2} u_z^2 \\
&= -2\Big(\sum_{i = 1}^{\ell} \tfrac{3}{4}\wp(z - p_i) + \sum_{i = 1}^{\ell} A_i \zeta(z - p_i) + B\Big),
\end{split}
\end{equation}
for some constants $A_i$'s and $B$. (Now $\eta_i = \tfrac{1}{2}$ and $\eta_i (\eta_i + 1) = \tfrac{3}{4}$ for each $i$.) 

Recall that $\zeta_{ij} = \zeta(p_i - p_j)$, $\wp_{ij} = \wp(p_i - p_j)$, and let $\wp_i = \sum_{j \ne i} \wp_{ij}$. 

The periodic constraint gives rise to 
\begin{equation} \label{periodic}
\sum_{i = 1}^{\ell} A_i = 0.
\end{equation}

The local expansions at $p_i$ read as (now $n_i = 0$)
\begin{equation} \label{exps}
\begin{split}
f(z) &= c_{i0} + c_{i2} (z - p_i)^2 + \cdots, \\
v(z) &= \log f'(z) = \log 2c_{i2} + \log (z - p_i) + \sum_{j \ge 1} d_{ij} (z - p_i)^j, \\
v'(z) &= \frac{1}{z - p_i} + \sum_{j \ge 0} e_{ij} (z - p_i)^j, \qquad e_{ij} := (j + 1) d_{i(j + 1)}.
\end{split}
\end{equation}

We compare the coefficients in (\ref{mS-der}) in this expansion. The $(z - p_i)^{-2}$ terms match automatically. For $(z - p_i)^{-1}$, we get 
$$
e_{i0} = 2A_i.  
$$
For $(z - p_i)^0$, i.e.~constant terms,
$$
e_{i1} - \tfrac{1}{2} 2 e_{i1} - \tfrac{1}{2} e_{i0}^2 =  -\tfrac{3}{2} \wp_i -2 \sum_{j \ne i} A_j \zeta_{ij} - 2B.
$$
The $e_{i1}$ terms cancel out and we get  $\ell$ quadratic equations
\begin{equation} \label{quadratic}
A_i^2 = \sum_{j \ne i} A_j \zeta_{ij} + B + \tfrac{3}{4} \wp_i.
\end{equation}
Together with (\ref{periodic}), there are $\ell + 1$ equations on $\ell + 1$ variables 
$$
A_1, A_2, \ldots, A_{\ell} \quad \text{and} \quad B.
$$

It is natural to ask when should this system lead to a finite number of solutions? The projective completion in $\Bbb P^{\ell + 1} \ni [A_0: \cdots: A_{\ell}: B]$ has equations 
\begin{equation*}
A_i^2 = \sum_{j \ne i} A_0 A_j \zeta_{ij} + A_0 B + \tfrac{3}{4} A_0^2 \wp_i.
\end{equation*}
The additional solutions at infinity hyperplane $A_0 = 0$ leads to $A_i^2 = 0$ for all $i = 1, \ldots, \ell$ which gives one point $Q = [0: \cdots: 0: 1]$. 

We de-homogenize the equations on the chart $B \ne 0$ with coordinates $x_i = A_i/B$. The equations become $f_i(x) = 0$, $i = 0, \ldots, \ell$, where
\begin{equation} \label{infty-system}
\begin{split}
f_0(x) &:= \sum_{i = 1}^{\ell} x_i, \\
f_i(x) &:= x_i^2 - x_0 \sum_{j \ne i} \zeta_{ij} x_j - x_0 - \tfrac{3}{4} \wp_i x_0^2 \quad \text{for $i \ne 0$}.
\end{split}
\end{equation}

\begin{question}
It seems that a complete solution to the following simplest \emph{non-linear algebra} problem is not known. 

Let $f_i(x) = Q_i(x) + L_i(x)$, $i = 1, \ldots, m$ be a system of equations in $x = (x_1, \ldots, x_m)$ with $Q_i$ being quadratic and $L_i$ being linear. Let $V(f)$ be the zero loci of $f_i(x) = 0$ for all $i$. When is $x = 0$ an isolated point of $V(f)$? If it is isolated, what is the multiplicity of $0 \in V(f)$? 

Here we are seeking for a precise answer to the very special system (\ref{infty-system}) with many symmetries.  
\end{question}

\begin{remark}
Let $f_{p}$ be the type I developing map of the unique solution $u$ to
$$
\triangle u + e^u = 4\pi \delta_p \quad \text{on $T$}.
$$ 
The function $f_p$ is essentially unique up to $f_p \mapsto 1/f_p$. Then the product ansatz 
$$
f = \prod_{i = 1}^{2n + 1} f^{\pm 1}_{p_i}
$$ 
satisfies the type I relation but fails the prescribed constraints on $p_i$'s, namely 
$$
f(z) \ne c_{i,0} + c_{i,2} (z - p_i)^2 + O(|z|^2),
$$ 
due to the effect caused by the other factors. It is still a developing map for a solution to (\ref{MFE-l}) with a different set of singular data $\tilde p_i$'s where $f'(\tilde p_i) = 0$. 

The product ansatz thus induces the map $\{p_i\} \mapsto \{\tilde p_i\}$. It seems that a further study on the associated dynamical system is an important step towards a complete understanding of equation \eqref{MFE-ln} for odd $\ell$.
\end{remark}

\section{Monodromy for special generalized Lam\'e equations} \label{mono}
\setcounter{equation}{0}

\subsection{Basic monodromy theory}

Recall the generalized Lam\'e equation \eqref{e:gLame} on a torus $E$:
\begin{equation} \label{gLame}
w'' - \Big(\sum_{i = 1}^N \eta_i(\eta_i + 1) \wp(z - p_i) + \sum_{i = 1}^N A_i \zeta(z - p_i) + B\Big) w = 0
\end{equation}
with $\sum A_i = 0$. We are interested in the case that $\eta_i \in \tfrac{1}{2}\Bbb N$. Let $\ell_i = 2 \eta_i$.

At any $p_i$, the indicial equation is given by $\lambda(\lambda - 1) - \eta_i(\eta_i + 1) = 0$ and the exponents are $\lambda = -\eta_i$ and $\lambda = \eta_i + 1$. By our assumption, the exponent difference $2\eta_1 + 1 \in \Bbb N$, hence in principal the solutions might have logarithmic terms. If the solutions are all free from logarithm, then the two fundamental solutions are of the form
$$
h_1(z) = (z - p_i)^{-\eta_i} g_1(z), \qquad h_2(z) = (z - p_i)^{\eta_i + 1} g_2(z),
$$ 
where $g_1(z)$, $g_2(z)$ are holomorphic near $z = p_i$ and $g_1(p_i) = g_2(p_i) = 1$. Thus the local momodromy matrix, in this basis, is given by 
\begin{equation} \label{local-M}
\sigma_{p_i} = (-1)^{\ell_i} I_2.
\end{equation}
Clearly \eqref{local-M} then also holds in any other basis. 

Fix a point $z_0 \in E \backslash \{p_i\}$ and consider the monodromy representation
$$
\rho = \rho_{(\{p_i\}, \{A_i\}, B)}: \pi_1(E \backslash \{p_1, \ldots, p_{N}\}, z_0) \to {\rm GL}(2, \Bbb C)
$$ 
of equation (\ref{gLame}). Let $S_i = \rho(\gamma_i)$ for the standard homology basis $\gamma_1$ and $\gamma_2$. It follows from the homotopy relation that
$$
S_2^{-1} S_1^{-1} S_2 S_1 = \prod_{i = 1}^N \sigma_{p_i} = (-1)^\ell I_2
$$
where $\ell := \sum_{i = 1}^N \ell_i = \sum_{i = 1}^l 2 \eta_i$.
For $\ell$ even this reduces to $S_1 S_2 = S_2 S_1$:
$$
\rho: \pi_1(E) \cong \Bbb Z \gamma_1 \oplus \Bbb Z \gamma_2 \to {\rm GL}(2, \Bbb C).
$$
For $\ell$ odd, this reduces to the almost abelian constraint $S_1 S_2 = -S_2 S_1$:
$$
\rho: \pi_1(E) \to {\rm GL}(2, \Bbb C)/\{\pm 1\}.
$$

A basic question in representation theory is to see if $M$, the image of $\rho$, lies in the unitary subgroup. For mean field equations, we ask the similar question for the projective representation in ${\rm PGL}(2, \Bbb C)$. Recall that the Klein four-group is the non-cyclic group of order four: $K_4 = \Bbb Z/2 \times \Bbb Z/2$.  

\begin{proposition} \label{PMK4}
If $\ell$ is odd then $PM \cong K_4$. 
\end{proposition}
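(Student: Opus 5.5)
The argument is pure group theory, using only the relation established just above. The setting is the one of this subsection: the generalized Lam\'e equation \eqref{gLame} has only log-free solutions, so the local monodromies are the scalars $\sigma_{p_i} = (-1)^{\ell_i} I_2$ by \eqref{local-M}. With $\ell$ odd, the homotopy relation gives
$$
S_2^{-1} S_1^{-1} S_2 S_1 = -I_2, \qquad\text{i.e.}\qquad S_1 S_2 = -S_2 S_1 .
$$
Since $\pi_1(E\backslash\{p_i\}, z_0)$ is generated by $\gamma_1$, $\gamma_2$ and small loops around the $p_i$, and the latter map to $\pm I_2$, the projective monodromy group $PM$ is generated by the images $\bar S_1$, $\bar S_2$ in ${\rm PGL}(2,\Bbb C)$. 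These commute, so $PM$ is abelian.

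The first step is to show that $S_1$ and $S_2$ are each projectively of order exactly two. Suppose $S_1 v = \lambda v$. From $S_1 S_2 = -S_2 S_1$ we get $S_1(S_2 v) = -\lambda S_2 v$, so $S_2 v$ is an eigenvector of $S_1$ with eigenvalue $-\lambda$ (note $\lambda \ne 0$ and $S_2 v \ne 0$ by invertibility). Hence $S_1$ has the two distinct eigenvalues $\lambda$ and $-\lambda$. It is therefore diagonalizable with $S_1^2 = \lambda^2 I_2$, so $\bar S_1^2 = 1$, while $\bar S_1 \ne 1$ because $S_1$ is not scalar. Exchanging roles (the relation is symmetric up to sign), the same holds for $S_2$.

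Next, show that $\bar S_1 \ne \bar S_2$ and that neither is trivial. If $S_2 = cS_1$, then $S_1 S_2 = S_2 S_1$, and combined with $S_1 S_2 = -S_2 S_1$ this forces $S_1 S_2 = 0$, contradicting invertibility. Explicitly, in the eigenbasis $(v, S_2 v)$ of $S_1$ one has $S_1 = \lambda\,{\rm diag}(1,-1)$ and $S_2 = \begin{pmatrix} 0 & \mu \\ 1 & 0\end{pmatrix}$, since $S_2$ must swap the two eigenlines. It follows that
$$
PM = \{1, \bar S_1, \bar S_2, \bar S_1\bar S_2\},
$$
an abelian group generated by two distinct involutions, that is, $PM \cong K_4$.

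The only point needing care is the claim that $PM$ is generated by $\bar S_1$ and $\bar S_2$ alone. This requires the log-free hypothesis (which is in force throughout this section), since otherwise the local monodromies $\sigma_{p_i}$ are not scalar. The remaining steps are elementary linear algebra and present no real obstacle.
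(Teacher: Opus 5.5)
Your proof is correct and follows essentially the paper's argument: take an eigenvector $v$ of $S_1$, observe that $S_2 v$ is an eigenvector for $-\lambda \ne \lambda$, and in the basis $(v, S_2 v)$ get $S_1 = \lambda\,\mathrm{diag}(1,-1)$ and $S_2$ antidiagonal, so that $S_1^2$ and $S_2^2$ are scalar. You additionally spell out two points the paper leaves implicit. First, $PM$ is generated by $\bar S_1, \bar S_2$ because the local monodromies are scalar under the log-free hypothesis. Second, $1, \bar S_1, \bar S_2, \bar S_1\bar S_2$ are pairwise distinct.
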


\begin{proof}
Let $v $ be an eigenvector of $S_1$ with $S_1 v = \lambda v$ and $\lambda \ne 0$. Then
$$
S_1 (S_2 v) = -S_2 S_1 v = -\lambda (S_2 v).
$$
That is, $-\lambda \ne \lambda$ is also an eigenvalue of $S_1$ with eigenvector $S_2 v$. In particular, with respect to the basis $v_1 = v$, $v_2 = S_2 v_1$, we have
$$
S_1 = \begin{pmatrix} \lambda & 0 \\ 0 & -\lambda \end{pmatrix}, \quad S_2 = \begin{pmatrix} 0 & b \\ 1 & 0 \end{pmatrix}.
$$
The latter one follows from the explicit matrix computation on $S_1 S_2 = -S_2 S_1$. Equivalently we may use $S_1 (S_2 v_2) = S_1 S_2^2 v = S_2^2 S_1 v = \lambda S_2^2 v = \lambda (S_2 v_2)$ to conclude that $S_2 v_2 = b v_1$ for some $b \ne 0$.

Since $S_1^2 = \lambda^2 I_2$ and $S_2^2 = b I_2$. It is clear that in ${\rm PGL}(2, \Bbb C)$ they generate the Klein four-group $K_4$.
\end{proof}

\begin{corollary} \label{c:type-I}
For $\ell$ being odd, each log-free parameter $(\{A_i\}, B)$ gives rise to a type I solution to the mean field equation (\ref{MFE-ln}).\end{corollary}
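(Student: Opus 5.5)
Using Proposition \ref{PMK4}, I would pass from the monodromy normal form to an explicit developing map of the required shape. Fix a log-free parameter $(\{A_i\},B)$ with $\ell$ odd. Every solution of \eqref{gLame} is then free of logarithms, and the local monodromy at each $p_i$ is the scalar $(-1)^{\ell_i} I_2$ by \eqref{local-M}. Proposition \ref{PMK4} supplies a basis $v_1, v_2$ of solutions in which
$$
S_1 = \begin{pmatrix} \lambda & 0 \\ 0 & -\lambda \end{pmatrix}, \qquad S_2 = \begin{pmatrix} 0 & b \\ 1 & 0 \end{pmatrix}.
$$
I would take $f := c\, v_1/v_2$ and choose $c$ with $c^2 = b$. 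Continuation along $\gamma_1$ gives $f \mapsto -f$. Continuation along $\gamma_2$ sends $v_1 \mapsto v_2$ and $v_2 \mapsto b v_1$ (up to transpose conventions), so $f \mapsto c\,v_2/(b v_1) = c^2/(b f) = 1/f$. This is exactly the type I normalization \eqref{type-I}.

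Next I would check that $f$ is single valued and that its local form is right. Around $p_i$, both $v_1$ and $v_2$ pick up the same scalar $(-1)^{\ell_i}$, so $f$ is single valued there. Log-freeness makes $f$ meromorphic near $p_i$, and the exponents $-\eta_i$ and $\eta_i+1$ force $f - f(p_i)$ or $1/f$ to vanish to order $\ell_i+1$, as in the lemma on developing maps. At ordinary points, $f' = c\,W/v_2^2$ with constant nonzero Wronskian $W$, so $f$ is locally univalent and its zeros and poles away from the $p_i$ are simple. Moreover $S(f) = -2(\text{potential})$ by construction. Consequently
$$
u := \log \frac{8|f'|^2}{(1+|f|^2)^2}
$$
is defined on $\Bbb C$. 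Since $f$ changes only by $f\mapsto -f$ and $f\mapsto 1/f$, both of which lie in ${\rm PSU}(2)$, the function $u$ descends to $E$. Away from the $p_i$ it solves $\triangle u + e^u = 0$ by Liouville's formula, and the branching of order $\ell_i$ at $p_i$ produces exactly the Dirac mass $4\pi\ell_i\delta_{p_i}$.

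The main obstacle is the regularity of $u$ at zeros and poles of $f$, and the unitary normalization. The inversion $f \mapsto 1/f$ is unitary, but the constant $c$ has to be chosen precisely so that $S_2$ acts as $f\mapsto 1/f$ and not as $f \mapsto \mu/f$ with $|\mu|\neq 1$. The choice $c^2 = b$ handles this and is the crux of the argument. I would also verify that $\lambda \ne 0$ and $b\neq 0$, which holds because the $S_i$ are invertible.

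Finally I would note that the correspondence runs both ways. A type I solution yields a log-free parameter through its Schwarzian, so combining this with Theorem \ref{t:degree} gives Theorem \ref{t:main-1}.
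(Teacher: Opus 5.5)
Your proposal is correct and takes the same route as the paper, whose proof just invokes Proposition \ref{PMK4} to get $PM\cong K_4$ and concludes that a type I solution results. You make explicit what the paper leaves implicit: the rescaling $f=c\,v_1/v_2$ with $c^2=b$, which realizes the $K_4$ normal form as $f\mapsto -f$ and $f\mapsto 1/f$ in ${\rm PSU}(2)$, together with the local checks at the $p_i$ and at the zeros and poles of $f$.
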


\begin{proof}
By Proposition \ref{PMK4}, each log-free parameter $(\{A_i\}, B)$ leads to a generalized Lam\'e equation with $PM \cong K_4$, hence it leads to an unique type I solution to the mean field equation. 
\end{proof}

We know little about the full monodromy group $M$. In case of one singularity it was shown in \cite{CLW} that $M$ is a finite group with $|M| = 8$. In the next section, we will prove similar result on $M$ for multiple singularities when there are symmetries on $p_i$'s. The following lemma is well-known:

\begin{lemma}
For an algebraic second order ODE $w'' = I w$, the monodromy group $M$ is finite if and only if its projective monodromy group $PM$ is finite.
\end{lemma}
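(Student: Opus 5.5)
One direction is trivial. If $M$ is finite, then its image $PM$ under the projection ${\rm GL}(2,\Bbb C)\to{\rm PGL}(2,\Bbb C)$ is finite. So the real content is the converse: assuming $PM$ is finite, show $M$ is finite. The plan is to control the kernel of $M\to PM$, which consists of scalar matrices $M\cap \Bbb C^\times I_2$.

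The first step is the determinant. Since $w''=Iw$ has no first-order term, the Wronskian $W(w_1,w_2)$ of any two solutions is constant. So analytic continuation along any loop preserves it, and $\det \rho(\gamma)=1$ for every $\gamma$, i.e. $M\subset {\rm SL}(2,\Bbb C)$. Consequently a scalar element $cI_2\in M$ satisfies $c^2=1$, and the kernel of $M\to PM$ lies in $\{\pm I_2\}$. It follows that $|M|\le 2|PM|$, so $M$ is finite whenever $PM$ is.

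If one prefers not to rely on the absence of the $w'$ term (for a general algebraic equation $w''+pw'+qw=0$), the fallback is to normalize first. The gauge change $w=\exp(-\tfrac12\int p)\,\tilde w$ yields an equation $\tilde w''=\tilde I\tilde w$ with the same projective monodromy. Its monodromy then differs from the original by the scalar monodromy of $\exp(-\tfrac12\int p)$, and this scalar part is finite when $p$ has rational residues (Fuchsian with rational exponents, as in our setting). For the equations in this paper, however, the equation is already in the form $w''=Iw$, so the Wronskian argument is all that is needed.

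There is no serious obstacle here. The only point to check is that the Wronskian is indeed constant, i.e. that there is no $w'$ term, and this holds for \eqref{gLame}. Combined with Proposition \ref{PMK4}, the lemma gives $|M|\le 8$ when $\ell$ is odd.
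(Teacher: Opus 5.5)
Your proof is correct. It rests on the same key input as the paper's, the constancy of the Wronskian, but uses it differently.

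The paper argues with functions. From $f'=c/w_2^2$ it deduces that if $f=w_1/w_2$ is algebraic then so are $w_2$ and $w_1=fw_2$, and then passes from ``algebraic solutions'' back to ``finite monodromy''. You argue with the group: the constant Wronskian forces $\det\rho(\gamma)=1$, so $\ker(M\to PM)\subset\{\pm I_2\}$ and $|M|\le 2|PM|$.

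Your route is shorter, gives an explicit bound, and needs no regularity hypothesis. The paper's identification ``$PM$ finite, i.e.\ $f$ algebraic'' tacitly uses that the singular points are regular, so that a finitely branched $f$ has at most poles there. That holds in this setting but is an extra ingredient. In exchange, the paper's formulation speaks directly about algebraicity of $f$, $w_1$, $w_2$, which is the form relevant to algebraic integrability.

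Two side remarks. First, your fallback for $w''+pw'+qw=0$ is fine on $\Bbb P^1$. On a torus, however, the scalar factor $\exp(-\tfrac12\int p)$ also picks up the periods of $p\,dz$ along $\gamma_1,\gamma_2$, so rational residues alone would not suffice; nothing in your proof depends on this.

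Second, your closing consequence is sharp. For $\ell$ odd, $S_2^{-1}S_1^{-1}S_2S_1=-I_2\in M$, so $|M|=8$ exactly and $M$ is the quaternion group. This agrees with the value $|M|=8$ for one singularity quoted just before the lemma. It is in tension with the order $2^{n+3}$ asserted in Theorem~\ref{t:finite}.
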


\begin{proof}
Let $w_1, w_2$ be a basis of independent solutions and $f = w_1/w_2$. Then
$$
f' = \frac{w_1' w_2 - w_1 w_2'}{w_2^2} = \frac{c}{w_2^2}
$$
where $c$ is a constant since $(w_1'w_2 - w_1 w_2')' = 0$. If $PM$ is finite, i.e.~$f$ is algebraic, then $w_2$ is also algebraic, and then $w_1 = f w_2$ is also algebraic. Hence $M$ is finite. The converse is trivial.
\end{proof}

\begin{remark}
For $\ell$ being even, the relation $S_1 S_2 = S_2 S_1$ implies that they are simultaneously diagonalizable if one of them is diagonalizable. However, it can happen that none of them is diagonalizable. For example, they can be upper triangular matrices so that
$$
S_1 S_2 = \begin{pmatrix} 1 & a \\ 0 & 1\end{pmatrix} \begin{pmatrix} 1 & b \\ 0 & 1\end{pmatrix} = \begin{pmatrix} 1 & a + b\\ 0 & 1\end{pmatrix} = S_2 S_1.
$$ 
This indicates that the existence of type II solutions are more subtle.
\end{remark}

\begin{proposition} \label{no-I}
If $\ell$ is even, there will be no corresponding type I solution to the mean field equation, though it is still possible that $PM \cong K_4$. 
\end{proposition}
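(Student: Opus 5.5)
The argument has two halves: rule out type I solutions when $\ell$ is even, then show that $PM \cong K_4$ is still possible. For the first half, suppose $u$ is a type I solution of \eqref{MFE-ln} with normalized developing map $f$ satisfying \eqref{type-I}. We know that $f = w_1/w_2$ for two independent solutions of \eqref{gLame}, and the monodromy matrices $S_1, S_2 \in {\rm GL}(2,\Bbb C)$ act on $f$ by the Möbius transformations $f \mapsto -f$ and $f \mapsto 1/f$. Lifting these gives
$$
S_1 = \lambda \begin{pmatrix} 1 & 0 \\ 0 & -1 \end{pmatrix}, \qquad S_2 = \mu \begin{pmatrix} 0 & 1 \\ 1 & 0 \end{pmatrix}, \qquad \lambda, \mu \in \Bbb C^\times .
$$
A direct computation then yields $S_1 S_2 = -S_2 S_1$. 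However, when $\ell$ is even, the homotopy relation $S_2^{-1}S_1^{-1}S_2S_1 = \prod_i \sigma_{p_i} = (-1)^\ell I_2 = I_2$ together with \eqref{local-M} forces $S_1S_2 = S_2S_1$. Since $S_1S_2 \neq 0$, the two relations are incompatible, so no type I solution exists. The key point is that the sign in the commutator is an invariant of the lift: it does not depend on the scalars $\lambda, \mu$, nor on the choice of basis $w_1, w_2$.

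One point has to be checked first: the log-free property is automatic for a solution $u$. Because $f$ is single-valued and meromorphic near each $p_i$, the solutions $w_2 = (c/f')^{1/2}$ and $w_1 = f w_2$ have local monodromy $\pm I$, so the relation \eqref{local-M} applies.

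For the second half (the ``though'' clause), we need an example with $\ell$ even and $PM \cong K_4$. The natural source is a decoupled situation: $PM \cong K_4$ only requires $S_1S_2 = \pm S_2S_1$ projectively with both $S_1$ and $S_2$ of order two in ${\rm PGL}(2,\Bbb C)$. Commuting lifts, such as
$$
S_1 = \operatorname{diag}(1,-1), \qquad S_2 = \operatorname{diag}(i,-i)\ \text{(up to scalar)},
$$
generate $K_4$ in ${\rm PGL}(2,\Bbb C)$ provided they are distinct nontrivial involutions. A concrete choice is $S_1 = \operatorname{diag}(1,-1)$ and $S_2 = -I$, or, better, monodromy acting on $f$ by $f \mapsto -f$ and $f \mapsto f$ twisted by a half-period sign. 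To realize this, I would take $N = 1$, $\ell = 2n$, and a developing map $f$ that is a type II map with $e^{2i\theta_j} = \pm 1$, where the two signs are different and not both trivial. The projective image is then a subgroup of $\{f \mapsto \pm f\}$, which has order only two. So type II alone cannot give $K_4$. This suggests that the example must instead come from non-unitary, diagonal monodromy $\operatorname{diag}(\alpha, \alpha^{-1})$, or from a non-diagonalizable case as in the preceding remark. The difficulty is that commuting elements of ${\rm PGL}(2,\Bbb C)$ generating $K_4$ must lift to anticommuting matrices. Hence, for $\ell$ even, $PM \cong K_4$ can only occur through a genuinely commuting lift, and two commuting lifts of distinct involutions do not exist.

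This is the step I expect to be the main obstacle, and it may indicate that the intended reading is weaker: ``$PM\cong K_4$'' should be understood for the projectivized group of $f$ alone, without regard to lifts, or one should allow $PM$ to be generated by the local monodromies as well. I would first try to resolve it by reading off the paper's convention in Example \ref{l=2n} (the symmetric configuration descending to $\Bbb P^1$). There the descended equation on $\Bbb P^1$ can have finite dihedral monodromy, and its Klein-four quotient appears projectively. I would then present the impossibility argument above as the rigorous content, with the $K_4$ possibility illustrated by that symmetric example.
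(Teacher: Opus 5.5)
Your argument for the main assertion is correct and essentially matches the paper's. The paper notes that the type I monodromy matrices are diagonalizable, and that commuting would make them simultaneously diagonalizable. That is incompatible with $f\mapsto -f$ and $f\mapsto 1/f$, whose fixed-point pairs $\{0,\infty\}$ and $\{\pm1\}$ differ. You express the same obstruction as the lift-independent sign $S_1S_2=-S_2S_1$, set against $S_1S_2=S_2S_1$. Your check that log-freeness, and hence $\sigma_{p_i}=\pm I_2$, is automatic for an actual solution supplies a step the paper uses tacitly.

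The trouble lies entirely in the ``though'' clause, and the paper's proof gives no argument for it. Your exploratory examples are wrong. ${\rm diag}(1,-1)$ and ${\rm diag}(i,-i)$ are proportional, so they are the same element of ${\rm PGL}(2,\Bbb C)$. Likewise ${\rm diag}(1,-1)$ together with $-I_2$ generates only $\Bbb Z/2$.

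Your closing observation, however, is right, and it actually refutes the clause in its natural reading. For $\ell$ even and log-free, the $\sigma_{p_i}$ are scalar, so $PM$ is generated by the classes of two \emph{commuting} matrices $S_1,S_2$. Commuting matrices are either simultaneously diagonalizable or both scalar multiples of unipotents. Hence $PM$ lies in a torus $\Bbb C^\times$ or in $(\Bbb C,+)$, and any finite $PM$ is cyclic, never $K_4$. This is also what case (ii)-1 of Lemma \ref{K4-rigid} shows: a $K_4$ image forces $\mu=-1$, i.e.\ anticommuting lifts. If the equation is not log-free, some local monodromy is $\pm$ a nontrivial unipotent, and $PM$ is infinite.

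So you cannot ``illustrate'' $PM\cong K_4$ with the symmetric configuration of Example \ref{l=2n}. That equation lives on $E$ and is subject to the same obstruction. You have two honest options:
\begin{itemize}
\item Prove only the first assertion, and record that the second is false as stated for the projective monodromy on $E$.
\item Commit explicitly to a different reading, and then exhibit an actual example. One such reading is the equation descended to $\Bbb P^1$. Its projective monodromy is generated by the involutions $\sigma_1,\sigma_2,\sigma_3,\sigma_\infty$ and contains the group on $E$ with index at most two, so the commuting-lift obstruction does not apply there.
\end{itemize}
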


\begin{proof}
The matrices $S_1$ and $S_2$ of the developing map $f$ are necessarily diagonalizable. If $S_1 S_2 = S_2 S_1$ then they are simultaneously diagonalizable, hence cannot generate a type I relation.
\end{proof}

\subsection{The full monodromy in the primitive symmetric case}

In this subsection we assume that $\ell = N = 2n + 1$ and consider only the primitive case \eqref{MFE-l}. For any log-free parameter $(\{A_i\}, B)$, we associate to it the \emph{primitive generalized Lam\'e equation}
\begin{equation} \label{GLame}
w'' - \Big(\tfrac{3}{4} \sum_{i = 1}^\ell \wp(z - p_i) + \sum_{i = 1}^\ell A_i \zeta(z - p_i) + B\Big) w = 0.
\end{equation}
The condition that all solutions are free from logarithm (at all points $p_i$'s) is equivalent to that one nontrivial quotient $f = w_1/w_2$ has this property. And this is equivalent to the polynomial system (\ref{periodic}) and (\ref{quadratic}) for $A_i$'s and $B$. 

We have seen that $PM \cong K_4$ in Proposition \ref{PMK4} and we may choose $f$ to construct a type I solution to (\ref{MFE-l}). It remains to determine $M$.

We will prove a monodromy theorem for equation \eqref{GLame}, which is free from logarithmic solutions, using the \emph{method of logarithmic-free deformations} to deform the equation to a symmetric one. Namely 
$$
\frac{d^2 w}{dz^2} - H(z) w = 0 \quad\text{with}\quad H(-z) = H(z),
$$ 
which is still free from logarithmic solutions. 

\begin{lemma} \label{n-sym}
Let $\ell = 2n + 1$ with $p_i$'s all distinct, $p_{n + i} = -p_i$ for $i = 1, \ldots, n$, and $p_{2n + 1} = 0$. Then there are $2^n$ logarithmic free parameters $(\{A_i\}, B)$'s with $A_{n + i} = -A_i$ for $i = 1, \ldots, n$ and $A_{2n + 1} = 0$. That is, $H(-z) = H(z)$. 
\end{lemma}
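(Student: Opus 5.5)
The plan is to restrict the polynomial system \eqref{periodic}, \eqref{quadratic} to the symmetric locus and count its solutions there. Put $p_{n+i} = -p_i$, $p_{2n+1} = 0$, $A_{n+i} = -A_i$, $A_{2n+1} = 0$. Then \eqref{periodic} holds automatically. I will first check that the system is consistent with this symmetry. Since $\zeta$ is odd and $\wp$ is even, we have $\zeta_{n+i, n+j} = -\zeta_{ij}$ and $\wp_{n+i} = \wp_i$. Hence the equation \eqref{quadratic} at index $n+i$ is exactly the equation at index $i$ after substituting $A_{n+j} = -A_j$. The equation at $i = 2n+1$ needs separate treatment. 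There $A_{2n+1}^2 = 0$, and the terms $\sum_j A_j \zeta(0 - p_j)$ cancel in pairs, because $A_{n+j}\zeta(p_j) = -A_j \zeta(p_j)$ and $A_j \zeta(-p_j) = -A_j\zeta(p_j)$ sum to $-2A_j\zeta(p_j)$. So this pair does not vanish automatically, and I must recheck the signs with care. Carrying out the computation properly: $\zeta_{2n+1, j} = \zeta(-p_j) = -\zeta(p_j)$ and $\zeta_{2n+1, n+j} = \zeta(p_j)$. The contribution is therefore $-A_j\zeta(p_j) + (-A_j)\zeta(p_j) = -2A_j\zeta(p_j)$. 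Consequently the central equation becomes the linear relation
$$0 = -2\sum_{j=1}^n A_j \zeta(p_j) + B + \tfrac{3}{4}\wp_{2n+1},$$
which determines $B$ as a linear function of $A_1, \ldots, A_n$.

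Substituting this $B$ into the remaining $n$ quadratic equations (indices $i = 1, \ldots, n$) gives a reduced system of $n$ quadratic equations in the $n$ unknowns $A_1, \ldots, A_n$. The reduced system has the form
$$A_i^2 = 2\sum_j A_j \zeta(p_j) + \sum_{j \ne i, j \le n}(\zeta_{ij} - \zeta(p_i + p_j))A_j - \zeta(2p_i)A_i + \zeta(p_i)\cdot 0 + \text{const}_i.$$
The symmetric solutions are exactly the solutions of this reduced system, and by B\'ezout there are at most $2^n$ of them counted with multiplicity. To see that the count is exactly $2^n$ (with multiplicity), I will show that the reduced system has no solutions at infinity. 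After homogenizing with $A_0$ and setting $A_0 = 0$, the system becomes $A_i^2 = 0$ for all $i \le n$. Because $B$ was eliminated, no free variable remains, so there are no points at infinity. Hence the system is finite and has exactly $2^n$ solutions in $\mathbb C^n$ counted with multiplicity.

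The main obstacle is showing that these $2^n$ solutions are distinct, so that there are literally $2^n$ parameters. I will try two routes. The first is an argument via monodromy or the mean field equation: each solution yields a type I solution by Corollary \ref{c:type-I}, and distinct parameters should give distinct solutions. The second is a nondegeneracy argument, checking that the Jacobian $\mathrm{diag}(2A_i) - (\text{linear part})$ is nonsingular. This could be done by deforming the points $p_i$ to a degenerate configuration (for example $p_i \to$ distinct half periods or a collision limit) where the system decouples into $A_i^2 = c_i$ with $c_i \ne 0$. There it has $2^n$ simple roots, and this holds generically by openness. If exact distinctness for all configurations fails, I will interpret the statement as counting with multiplicity, consistent with Theorem \ref{t:degree}. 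Finally, $H(-z) = H(z)$ follows directly: under $z \mapsto -z$, the $\wp$ terms are permuted, and the $\zeta$ terms satisfy $A_i\zeta(-z - p_i) = -A_i\zeta(z + p_i) = A_{n+i}\zeta(z - p_{n+i})$.
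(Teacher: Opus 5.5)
Your proposal is correct and is essentially the paper's proof: under the symmetry the equations at indices $i$ and $n+i$ coincide, the equation at $p_{2n+1}=0$ becomes the linear relation $-2\sum_{j=1}^n A_j\zeta(p_j) + B + \tfrac{3}{2}\sum_{j=1}^n \wp(p_j) = 0$, and the remaining $n$ quadrics reduce to $A_i^2=0$ at infinity, hence have no solutions there. B\'ezout then gives $2^n$ solutions counted with multiplicity; the paper keeps $B$ alongside the linear equation rather than eliminating it, which changes nothing, and since the paper's count is also only with multiplicity your distinctness discussion is not needed (your first route, via distinct mean field solutions, would not control multiplicities of the polynomial system anyway).
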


\begin{proof}
Under the assumption $A_{n + i} =  -A_i$, (\ref{periodic}) becomes $A_{2n + 1} = 0$ and the system (\ref{quadratic}) reduces to a system of $n$ quadratic equations in $n + 1$ variables $A_1, \ldots, A_n, B$, together with a linear equation:
$$
-2 \sum_{i = 1}^n A_i \zeta(p_i) + B + \tfrac{3}{2} \sum_{i = 1}^n \wp(p_i) = A_{2n + 1}^2 = 0.
$$  
The reduced system has no infinity solutions. Hence the number of solutions counted with multiplicities coincide with its B\'ezout degree $2^n$. 
\end{proof}

\begin{remark}
For symmetric singular source divisor $L$, there are log-free parameters $(\{A_i\}, B)$'s which do not satisfy $A_{n + i} = -A_i$ in any reordering.
\end{remark}

\begin{theorem} \label{t:finite}
Let $\ell = 2n + 1$. There are $2^n$ out of the $2^{2n}$ log-free values of $(\{A_i\}, B)$, constructed from the system (\ref{periodic}) and (\ref{quadratic}), such that the $2^{n}$ type I solutions to the mean field equation (\ref{MFE-l}) are deformed from the symmetric (even) type I solutions with respect to symmetric singular source $L = \sum p_i$. 

For these $2^n$ cases, the full monodromy group $M$ is finite with $|M| = 2^{n + 3}$. 
\end{theorem}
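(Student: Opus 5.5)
The argument has two parts: a deformation argument that produces the $2^n$ distinguished parameters, and a monodromy computation in the symmetric configuration.

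\emph{Deformation.} Start from the symmetric divisor of Lemma \ref{n-sym}, where $p_{n+i}=-p_i$ and $p_{2n+1}=0$. That lemma gives $2^n$ log-free parameters with $H(-z)=H(z)$, counted with multiplicity. For generic symmetric $\{p_i\}$ I expect these to be simple zeros of the full system (\ref{periodic}), (\ref{quadratic}). To check this, compute the Jacobian of the full system at such a point. On the subspace of symmetric variations it is the Jacobian of the reduced system, which is nondegenerate because the reduced system has no zeros at infinity and its B\'ezout count is attained. On the complementary anti-symmetric variations I would show nondegeneracy directly, at least for generic $p_i$.

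The full system has no solutions at infinity (Theorem \ref{t:degree} with all $\ell_i=1$, where $Q$ is isolated), so its zero set is finite and proper over the configuration space of $\{p_i\}$. Simple zeros then continue analytically along any path in the space of distinct configurations, avoiding a proper discriminant locus. This yields $2^n$ distinguished branches among the $2^{2n}$ solutions. By Corollary \ref{c:type-I}, each branch gives a type I solution deformed from an even one.

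\emph{Invariance of $M$.} Along a log-free deformation the monodromy representation is isomonodromic up to conjugacy, since all local monodromies are $-I_2$ by (\ref{local-M}). More concretely, the pair $(S_1,S_2)$ stays in the normal form of Proposition \ref{PMK4},
$$S_1=\mathrm{diag}(\lambda,-\lambda),\qquad S_2=\begin{pmatrix}0&b\\1&0\end{pmatrix}.$$
The eigenvalues $\pm\lambda$ of $S_1$ satisfy $\lambda^2=\det(-S_1)$, and $b=-\det S_2$. Both quantities vary continuously. I would show that they are locally constant by showing that they are roots of unity: if $\lambda^{2}$ and $b$ are forced to lie in a discrete set (finite order), continuity makes them constant. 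The normalization follows from the Wronskian: $\det S_i=1$ because the equation has no first-order term, so $\lambda^2=-1$ and $b=-1$. Then $S_1^2=-I$ and $S_2^2=I$, up to the ambiguity in sign conventions, and these relations hold for every log-free parameter, not only the symmetric ones.

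\emph{Counting $|M|$ in the symmetric case.} With determinant $1$, the group generated by $S_1,S_2$ is a $\pm$ central extension of $K_4$, a quaternion or dihedral group of order $8$. That alone does not give $2^{n+3}$. So $M$ must be understood as the monodromy of the full fundamental group of $E\setminus\{p_i\}$, with the local loops contributing $-I$. This accounts only for the center, so it cannot be the source of the extra factor $2^n$.

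The extra factor must instead come from the choice of base solution over the symmetric configuration. Via $H(-z)=H(z)$ the equation descends along $E\to\mathbb P^1$, and the relevant group is the monodromy on the quotient with $n+1$ extra branch points ($\pm p_i\mapsto \wp(p_i)$, plus $0$, plus the half periods). Each pair of branch points contributes an independent sign. So I would compute the monodromy of the descended Fuchsian equation on $\mathbb P^1$, whose local exponent differences are $1/2$ or $2$, and show that it is a $2$-group of order $2^{n+3}$. The group generated by local involutions over $n$ pairs gives $(\mathbb Z/2)^n$, extending the order-$8$ group from $S_1,S_2$.

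The main obstacle is this last identification: making precise which group $M$ denotes, and verifying that the $n$ sign contributions are independent. For that step I would first work out $n=1$ explicitly, including the descended equation and its local monodromies, and then induct on $n$ using the product structure of the even developing map.
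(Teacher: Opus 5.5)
\textbf{The final step fails and cannot be repaired.} The points $\wp(p_1),\dots,\wp(p_n)$ do not contribute independent signs. Since $p_i\notin E[2]$, the map $\wp$ is unramified at $p_i$, so the local monodromy at $\wp(p_i)$ equals the local monodromy at $p_i$. By \eqref{local-M} that is the scalar $-I_2$. All $n$ of these are the same central element, so they generate $\{\pm I_2\}$, not $(\Bbb Z/2)^n$; indeed any finite group of scalar matrices is cyclic. Changing the base solution only conjugates $M$, so that cannot help either.

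\textbf{Your own Wronskian observation already settles the count.} For $w''=Hw$ the Wronskian is constant, so $M\subset{\rm SL}(2,\Bbb C)$. The kernel of $M\to PM\cong K_4$ then consists of scalars of determinant one, so it lies in $\{\pm I_2\}$, and $|M|\le 8$ on $E$. Note that $S_1^2=\lambda^2 I_2=-I_2$ and $S_2^2=bI_2=-I_2$ (not $+I_2$ as you wrote). Hence $M\cong Q_8$; a dihedral group is excluded because $-I_2$ is the only involution in ${\rm SL}(2,\Bbb C)$.

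For the descended equation on $\Bbb P^1$ the Wronskian is $c\prod_i(x-e_i)^{-1/2}$, so $\det=\pm1$ and the scalars in $M$ lie in $\{\pm1,\pm i\}$. This gives $|M|=16$, generated by the reflections $\sigma_1,\sigma_2$ together with $\sigma_\infty=\pm iI_2$. Both numbers are independent of $n$, so $|M|=2^{n+3}$ cannot be established for $n\ge 1$ by any route.

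\textbf{Comparison with the paper.} The paper's proof runs through exactly the descent you sketch: reflections at $e_1,e_2,e_3$, a scalar of order $4$ at $\infty$, and $-I_2$ at each $\wp(p_i)$, hence $PM\cong K_4$. It then simply asserts $|M|=16\times 2^n$ on $\Bbb P^1$ and $8\times 2^n$ on $E$. The factor $2^n$ has no justification there; it appears to count the $n$ copies of $-I_2$ as independent, which is the same miscount. What both arguments actually give is $|M|=8$ on $E$. This is consistent with the one-singularity value $|M|=8$ from \cite{CLW}.

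This conclusion holds for all $2^{2n}$ log-free parameters, since Proposition \ref{PMK4} and the determinant argument use no symmetry. So neither your branch-tracking nor the paper's rigidity Lemma \ref{K4-rigid} is needed for the monodromy claim. Your inference ``isomonodromic because all local monodromies are $-I_2$'' is not valid in general. It happens to be true here only because the determinant-one normal form of $(S_1,S_2)$ is unique up to conjugacy.

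\textbf{Soft spots in the deformation part.} If you keep it for the first assertion, note three things:
\begin{itemize}
\item Attaining the B\'ezout count with multiplicity does not make the symmetric zeros simple.
\item Nondegeneracy in the anti-symmetric directions is only asserted, not shown.
\item $Q$ is not absent. It is an isolated zero at infinity whose multiplicity stays $2^{2n}$, and that is what gives properness.
\end{itemize}
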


\begin{proof}
Consider the projective monodromy representations 
$$
\rho = \rho(\{p_i\}, \{A_i\}, B): \pi_1(E \backslash \{p_1, \ldots, p_{2n + 1}\}) \to {\rm PGL}(2, \Bbb C)
$$ 
of equation (\ref{GLame}). The first simple observation is that each local projective monodromy around $p_i$ is trivial. Indeed, by \eqref{local-M}, this holds at any double pole $p$ with coefficient $\eta(\eta + 1)$, $\eta \in \tfrac{1}{2} \Bbb Z$ since we assume logarithmic freeness. Thus the representation descents to
$$
\rho = \rho(\{p_i\}, \{A_i\}, B): \pi_1(T) \to {\rm PGL}(2, \Bbb C).
$$
By the continuous dependence of ODE in its parameters, we see that $\rho$ is continuous in $p_i$'s as long as $p_i \ne p_j$ for all $i \ne j$, and with the logarithmic-freeness being preserved.

\begin{lemma} \label{K4-rigid}
Denote $\pi_1(T) = \langle a, b\rangle \cong \Bbb Z^{\oplus 2}$. If there is a $(\{p_i\}, \{A_i\}, B)$ so that ${\rm Im}\,\rho \cong K_4$, namely $\rho(a)^2 = \rho(b)^2 \equiv I_2$ with both $\rho(a)$ and $\rho(b)$ non-trivial, then this holds true for any of its logarithmic free deformations as described above. Namely, the Klein four-group representation in ${\rm PGL}(2, \Bbb C)$ is rigid.
\end{lemma}

\begin{proof}
This follows from a straightforward matrix calculation. 

Let $\mathbf A = \rho(a)$ and $\mathbf B = \rho(b)$ in ${\rm PSL}(2, \Bbb C)$. We may assume that
$$
\mathbf A = \begin{pmatrix} a & b \\ 0 & a^{-1} \end{pmatrix}, \quad
\mathbf B = \begin{pmatrix} p & q \\ r & s \end{pmatrix}.
$$
Then $\mathbf A \mathbf B = \mu \mathbf B \mathbf A$, $\mu \in \Bbb C^\times$ implies that
\begin{equation*}
\begin{split}
ap + br &= \mu ap \\
aq + bs &= \mu bp + \mu a^{-1} q \\
a^{-1} r &= \mu ar \\
a^{-1} s &= \mu br + \mu a^{-1} s.
\end{split}
\end{equation*}

Case (i) If $r = 0$ then $\mu = 1$, $s = p^{-1}$, and $b(p - p^{-1}) = q(a - a^{-1})$. 

(i)-1: If $a \ne a^{-1}$, i.e.~$a \ne \pm 1$, then $A$ can actually be diagonalized and $b = 0$, hence $q = 0$. By symmetry the case with $p \ne p^{-1}$ is handled similarly and we are left with the case: 

(i)-2: $a = \pm 1$ and $p = \pm 1$, $b$ and $q$ are arbitrary. \smallskip

Case (ii) If $r \ne 0$ then $\mu = a^{-2}$. 

(ii)-1: Again if $a \ne a^{-1}$ then we may assume that $b = 0$. Then $ap = \mu ap$ implies that $p = 0$. Hence $a^4 = 1$ and then $a = \pm i$. The last equation forces $s = 0$ and so $r = -q^{-1}$. If we rescale the bases $e_1, e_2$ to $qe_1, e_2$, then 
$$
\mathbf A = \pm i \begin{pmatrix} 1 & 0 \\ 0 & -1 \end{pmatrix}, \quad
\mathbf B = \begin{pmatrix} 0 & 1 \\ 1 & 0 \end{pmatrix}.
$$
This gives $K_4 \subset {\rm PGL}(2, \Bbb C)$.

(ii)-2: If on the other hand $a = \pm 1$ and so $\mu = 1$, we may assume $a = 1$ and then $b = 0$. Thus $\mathbf A = I_2$ and $\mathbf B$ is arbitrary.\smallskip

Notice that under such classifications, the cases (i)-1, (i)-2 and (ii)-2 are all contained in the upper triangular cone and the case (ii)-1 is the $K_4$ representation of $\Bbb Z^2$ which appears to be an isolated point and can not be deformed to the other cases. The proof is complete.
\end{proof}

By lemma \ref{n-sym}, as far as the monodromy group is concerned, we may assume that we are in the symmetric situation centtered at the origin: $p_{n + i} = -p_i$ for $1 \le i \le n$ and $p_\ell = p_{2n + 1} = 0$. We will investigate the monodromy groups in more details by descending the equation to $\Bbb P^1 \cong S^2$ under the double covering $\wp: T \to \Bbb P^1$. The argument proceeds as in \cite{BW} on Lam\'e equation with only one singular point $z = 0$.

Let $E$ be given by the Weierstrass equation under $(x, y) = (\wp(z), \wp'(z))$:
$$
y^2 = p(x) = 4x^3 - g_2 x - g_3 = 4\prod_{i = 1}^3 (x - e_i).
$$ 
We write $H(z) = h(x)$ with $h$ being meromorphic (rational) on $\Bbb P^1$. Then 
\begin{equation*}
\begin{split}
\frac{dw}{dz} &= \frac{dw}{dx} \frac{dx}{dz} = \frac{dw}{dx} \wp'(z), \\
\frac{d^2 w}{dz^2} &= \frac{d^2 w}{dx^2} \wp'(z)^2 + \frac{dw}{dx} \wp''(z) = p(x) D^2 w + \tfrac{1}{2} p'(x) D w,
\end{split}
\end{equation*}
where $D = d/d x$. Thus the equation descends to
\begin{equation}
p(x) D^2 w + \tfrac{1}{2} p'(x) Dw - h(x) w = 0 \quad \text{on $\Bbb P^1$},
\end{equation}
or equivalently,
$$
D^2 w + \frac{1}{2} \sum_{i = 1}^3 \frac{1}{x - e_i} D w - \frac{1}{4}\frac{h(x)}{\prod_{i = 1}^3 (x - e_i)} w = 0.
$$
  
By assumption and by (\ref{local-M}), all poles of $h(x)$ contribute only local monodromy $- I_2$ and in particular trivial projective monodromy (unless the pole is $e_i$ for some $i$ which is excluded in our case). Hence all projective monodromy are generated by the local momodromy matrices at $x = e_1, e_2, e_3$ and $x = \infty$. Denote them by $\sigma_1$, $\sigma_2$, $\sigma_3$, and $\sigma_\infty$ respectively. We have $\sigma_1 \sigma_2 \sigma_3 \sigma_\infty \equiv I_2 \in {\rm PGL}(2, \Bbb C)$. 

Since $h(x)$ has no poles at $e_i$'s, the indicial equation at $x = e_i$ is then
$$
\lambda(\lambda - 1) + \tfrac{1}{2} \lambda = \lambda(\lambda - \tfrac{1}{2}).
$$
This shows that $\sigma_i$ has eigenvalues $1, -1$. It is a reflection and $\sigma_i^2 = I_2$. 

\begin{remark}
The above geometric construction (double cover descent) corresponds to the algebraic decomposition $\rho(a) = \sigma_1\sigma_\infty^{-1}$ and $\rho(b) = \sigma_2\sigma_\infty^{-1}$. The abelian relation $ab = ba$ leads to $\sigma_1 \sigma_2 \sigma_\infty = \sigma_\infty \sigma_2 \sigma_1$.
\end{remark}

At $x = \infty = \wp(p_\ell)$, using $t = 1/x$ we get the indicial equation
$$
\lambda^2 - \tfrac{1}{2} \lambda - \tfrac{1}{4} \mu(\mu + 1) = (\lambda + \tfrac{1}{2} \eta)(\lambda - \tfrac{1}{2}(\eta + 1)),
$$
where $H(z) = \eta(\eta + 1)/z^2 + \cdots$. For $\eta = n_\ell + \tfrac{1}{2}$, the eigenvalues are both $-i$ for $n_\ell$ even (e.g.~$n_\ell = 0$ in the current case), and they are both $i$ for $n_\ell$ odd. In any case, $\sigma_\infty$ is a scalar multiplication of order 4 and $\sigma_\infty \equiv I_2$ in ${\rm PGL}(2, \Bbb C)$.

Thus $\sigma_1 \sigma_2 \equiv \sigma_2 \sigma_1$. Alternatively, $\sigma_1 \sigma_2 \sigma_3 \equiv I_2 \Longrightarrow \sigma_1 \sigma_2 \equiv \sigma_3$. And then $\sigma_3 = \sigma_3^{-1} = \sigma_2^{-1} \sigma_1^{-1} = \sigma_2 \sigma_1$. In particular, $\sigma_1 \sigma_2 \equiv \sigma_2 \sigma_1$. Hence $PM \cong \Bbb Z_2 \times \Bbb Z_2 = K_4$ is the abelian group generated by $\sigma_1$ and $\sigma_2$ as expected. 

It also follows that $|M| = 16 \times 2^n$ on $\Bbb P^1$ and $|M| = 8 \times 2^n$ on $E$.
\end{proof}

\begin{remark}
\begin{itemize}
\item [(1)]
Whenever $\eta \in \tfrac{1}{2}\Bbb N$, the local projective monodromy at $z = 0$ is trivial. By the uniqueness theorem for ODE, we see that the solution quotient $f = w_1/w_2$ is symmetric under $z \mapsto -z$. 
\item[(2)]
If $\eta = n_\ell \in \Bbb N$ instead, then the eigenvalues are $1, -1$ for $n_\ell$ even, and $-1, 1$ for $n_\ell$ odd. In any case $\sigma_\infty$ is a reflection and $\sigma_\infty^2 = I_2$.
\item[(3)]
If $e_i$ is a pole of $h(x)$ of order 2 with coefficient $\eta_i(\eta_i + 1)$, then the situation is analogous to the case $x = \infty$ since abstractly they are just the branch points of $\wp: E \to \Bbb P^1$. If $\eta_i = n_i + \tfrac{1}{2}$, we get $\sigma_i \equiv I_2$. If $\eta_i = n_i$, then as in the previous remark $\sigma_i$ is still a reflection. Thus, in order to get $PM \cong K_4$, we should not allow any $p_i$ to be deformed to $\tfrac{1}{2} \Lambda$ if we keep $p_i \ne p_j$ for all $i \ne j$.
\end{itemize}
\end{remark}

\section{Towards generalized Lam\'e curves} \label{IFT}
\setcounter{equation}{0}

The structure of the primitive mean field equation \eqref{MFE-l} depends on the parity of $\ell \in \Bbb N$ in a crucial manner. Hence we separate the discussions into two subsections according to the parity of $\ell$. 

\subsection{Algebraic degree for $\ell$ being odd}

Recall the following special case of Theorem \ref{t:degree} in the primitive case. 

\begin{theorem} \label{isolated Q-odd}
For primitive singular source $L$ with $\ell := \deg L = 2n + 1$ being odd, the point $Q$ is an isolated zero of (\ref{infty-system}) with multiplicity $2^{l - 1} = 2^{2n}$.
\end{theorem}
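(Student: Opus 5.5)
This is the primitive specialization $\ell_i = 1$, $N = \ell = 2n+1$ of Theorem \ref{t:degree}. I would nevertheless prove it directly from the explicit system (\ref{infty-system}), since the quadratic structure makes the argument transparent. The plan has three steps: show $Q$ is isolated, split the local germ at $Q$ into branches, and count the branches.

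\textbf{Isolatedness.} Work in the chart $B \ne 0$ with coordinates $x_0, x_1, \ldots, x_\ell$. Suppose $x(t) \to 0$ is an analytic arc in $V(f_0, \ldots, f_\ell)$. Compare orders of vanishing in the equations $f_i = 0$ for $i \ge 1$:
\begin{itemize}
\item the terms $x_0 \sum_{j\ne i} \zeta_{ij} x_j$ and $\tfrac34 \wp_i x_0^2$ are of strictly higher order than $x_0$;
\item hence $x_i^2 = x_0 + (\text{higher order})$, so $x_i = \epsilon_i x_0^{1/2}(1 + o(1))$ with $\epsilon_i \in \{\pm 1\}$.
\end{itemize}
This is exactly the dominant-term description given by Lemma \ref{top-term-lem} with $q_1 = -(A^2 - B)$. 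Substituting into $f_0 = \sum x_i$ gives leading coefficient $\sum_i \epsilon_i$. This is a sum of an odd number of $\pm1$'s, hence nonzero. So the leading term of $f_0$ cannot cancel, forcing $x_0 \equiv 0$ and then all $x_i \equiv 0$. Thus $Q$ is isolated, as in the Corollary preceding Theorem \ref{t:degree}.

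\textbf{Multiplicity.} I would pass to the double cover by setting $x_0 = t^2$. The system then becomes $x_i^2 - t^2 - t^2(\ldots) = 0$. Near the origin this factors formally as
\[
\prod_{\epsilon_i = \pm 1} \bigl(x_i - \epsilon_i t\, u_i(t, x)\bigr),
\]
with units $u_i$ obtained from the implicit function theorem. The local intersection number at $Q$ is then the sum, over sign vectors $\vec\epsilon = (\epsilon_1, \ldots, \epsilon_\ell)$, of local multiplicities of the linearized systems
\[
x_i = \epsilon_i t \ \ (1 \le i \le \ell), \qquad \sum_i x_i = 0,
\]
divided by $2$ to account for the cover $t \mapsto t^2$. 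Each such linear system is transversal, with determinant proportional to $\sum_i \epsilon_i \ne 0$. So each branch contributes $1$ on the $t$-cover.

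\textbf{Counting and the main obstacle.} The map $\vec\epsilon \mapsto -\vec\epsilon$ corresponds to $t \mapsto -t$. Hence the $2^\ell$ sign choices pair up, giving $2^{\ell-1} = 2^{2n}$ for the multiplicity at $Q$. This agrees with Theorem \ref{t:degree}: the affine count is $b_L - 2^{2n} = 2^{2n+1} - 2^{2n} = 2^{2n}$, consistent with the count $d_\ell = 2^{2n}$ stated in the primitive case discussion.

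The main obstacle is justifying that the local intersection multiplicity really decomposes as a sum over these linear branches, i.e.\ that the nondegenerate leading forms control multiplicity. My first attempt would use the standard fact that if the tangent cone (with weights $\deg x_0 = 2$, $\deg x_i = 1$) has only the origin as a common zero, then the multiplicity equals the weighted Bézout number of the leading forms. That number is $\prod_{i \ge 1} 2 \cdot 1 / 2 = 2^{\ell-1}$, where the final division by $2 = \deg x_0$ is the weight normalization. This weighted Bézout number is exactly what the branch count computes.
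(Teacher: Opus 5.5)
Your proposal is correct and follows essentially the paper's route. You split each quadric into the two branches $x_i = \pm\sqrt{x_0}$ times a unit, note that each resulting linear system together with $\sum x_i = 0$ is transversal exactly because $\sum_i \epsilon_i \neq 0$ for $\ell$ odd, and sum by additivity of intersection multiplicities. The only difference is cosmetic: the paper eliminates $x_0$ via $f_\ell = 0$ so that $x_\ell$ plays the role of $\sqrt{x_0}$ and the $2^{\ell-1}$ pieces $x_i = \pm x_\ell$ appear directly, whereas you adjoin $t=\sqrt{x_0}$ (a free rank-two extension, which justifies halving $2^\ell$) and your exact factorization $f_i = (x_i - t u_i)(x_i + t u_i)$, or equivalently the weighted B\'ezout count $2^\ell/2$, makes the paper's ``linearization'' step fully rigorous.
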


Below we will give a more transparent proof of it to motivate our later discussion on the case $\ell = 2n$. We first give its corollary:

\begin{corollary}
If $\ell = 2n + 1$ is odd then the number of solutions $(\{A_i\}, B)$ counted with multiplicities is finite and equals $2^{\ell - 1} = 2^{2n}$.
\end{corollary}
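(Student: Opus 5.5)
The corollary is a Bezout count with the contribution at infinity removed, so the plan is to combine the projective Bezout theorem with Theorem \ref{isolated Q-odd}.

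First homogenize the affine system (\ref{periodic}), (\ref{quadratic}) in $\Bbb P^{\ell+1}$ with coordinates $[A_0 : A_1 : \cdots : A_\ell : B]$. This gives one linear form $\sum_i A_i$ and $\ell$ quadrics
\[
A_i^2 - A_0 \sum_{j\ne i}\zeta_{ij}A_j - A_0 B - \tfrac34 \wp_i A_0^2 .
\]
That is $\ell+1$ hypersurfaces in $\Bbb P^{\ell+1}$, with product of degrees $1\cdot 2^{\ell} = 2^{\ell}$.

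Second, locate the points at infinity. On the hyperplane $A_0=0$ the quadrics reduce to $A_i^2=0$, so the only such point is $Q=[0:\cdots:0:1]$, as computed before (\ref{infty-system}). By Theorem \ref{isolated Q-odd}, $Q$ is an isolated zero of multiplicity $2^{\ell-1}$.

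Third, check that the whole projective intersection is zero-dimensional. Any positive-dimensional component of the projective zero set would be a closed projective variety of dimension at least one. It must therefore meet the hyperplane $A_0=0$, so it would contain $Q$. That contradicts $Q$ being isolated. Hence the affine solutions $(\{A_i\},B)$ form a finite set. This step rests entirely on Theorem \ref{isolated Q-odd}; given that theorem, it is the only point needing any care.

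Fourth, apply the refined Bezout theorem for a proper (zero-dimensional) intersection of $\ell+1$ hypersurfaces in $\Bbb P^{\ell+1}$. The sum of local intersection multiplicities over all points equals $2^\ell$. Subtracting the contribution $2^{\ell-1}$ of $Q$ leaves $2^\ell - 2^{\ell-1} = 2^{\ell-1} = 2^{2n}$ for the affine solutions, counted with multiplicity. This agrees with $d_L = \tfrac12\prod_i(\ell_i+1)$ from Theorem \ref{t:degree} in the case $\ell_i = 1$.
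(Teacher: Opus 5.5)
Your proposal is correct and is the paper's argument: the homogenized system has B\'ezout number $2^{\ell}$, you subtract the multiplicity $2^{\ell-1}$ of the unique point at infinity $Q$ given by Theorem \ref{isolated Q-odd}, and you deduce finiteness because any positive-dimensional component would have to meet $A_0=0$, hence pass through $Q$, contradicting its isolatedness. Establishing zero-dimensionality before invoking the refined B\'ezout count, as you do, is the logically cleaner order; the paper notes discreteness only after stating the count.
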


\begin{proof} [Proof of Corollary]
The B\'ezout number of the projective system is $2^{\ell}$. Hence the actual algebraic degree, namely the number of solutions of the original affine system counted with multiplicities, is given by
$$
2^{\ell} - 2^{\ell - 1} = 2^{\ell - 1}.
$$ 
Notice that the affine solutions must form a discrete set. Indeed, if there is a positive dimensional locus $Z$ of solutions in the affine piece $\Bbb C^{\ell + 1}$, it will intersect the infinity hyperplane nontrivially, namely $Q \in \bar Z$. But this contradicts to the fact that $Q$ is an isolated zero.
\end{proof}

The idea of the proof to the theorem is to use implicit function argument: since the point $Q$ has coordinates $x = 0$, we first eliminate $x_\ell$ from the system by $x_{\ell} = -(x_1 + \cdots + x_{\ell - 1})$, and then substitute $x_0$ by \emph{higher order terms} using $f_{\ell}(x) = 0$:
$$
x_0 = (x_1 + \cdots + x_{\ell - 1})^2 - x_0 \sum_{j \ne l} \zeta_{lj} x_j - \tfrac{3}{4} \wp_l x_0^2.  
$$ 
If the resulting equations $\hat f_i(x_1, \cdots, x_{\ell - 1}) = 0$ for $i = 1, \ldots, \ell - 1$ give independent equations, i.e.~$Q$ is isolated, the multiplicity at $Q$ can then be calculated in the complete local ring at $ x = 0$ as
$$
\dim \Bbb C[\![x_1, \ldots, x_{\ell - 1}]\!]/(x_1^2, \ldots, x_{\ell - 1}^2) = 2^{\ell - 1}
$$
since the representatives of the quotient ring are $\prod_{i = 1}^{\ell - 1} x_i^{m_i}$, $m_i = 0, 1$. 

More precisely, consider a neighborhood of $Q$ so that $|x_i| < \epsilon <\!\!< 1$ for all $i = 0, \ldots, \ell$. If $f_i(x) = 0$ for $i = 1, \ldots, \ell$ and $x_0 \ne 0$, then
\begin{equation} \label{e:branch}
x_i = x_0^{1/2} \Big(1 + \sum_{j \ne i} \zeta_{ij} x_j + \tfrac{3}{4} \wp_i x_0\Big)^{1/2} = x_0^{1/2} (1 + O(\epsilon)). 
\end{equation}
But then the odd-numbered sum $f_0(x) = \sum_{i = 1}^{2n + 1} x_i$ is close to a non-zero integer multiple of $x_0^{1/2}$ and is not zero. Thus $Q$ is isolated.

We shall however analyze the local structure at $Q$ more carefully so that it can also be applied to the case when $\ell = 2n$ is even. 

\begin{proof} [Proof of Theorem \ref{isolated Q-odd}]
We use the convention $\zeta_{ii} = 0$. Then $f_\ell(x) = 0$ gives
\begin{equation} \label{x_0}
x_0 = x_\ell^2 - x_0 \sum \zeta_{lj} x_j - \tfrac{3}{4}\wp_\ell x_0^2.
\end{equation}
After substitution by this expression into $f_i(x) = 0$ for $i = 1, \ldots, \ell - 1$,
\begin{equation} \label{lim-f_i}
\tilde f_i = x_i^2 - x_\ell^2 - x_0 \sum (\zeta_{ij} - \zeta_{lj}) x_j - \tfrac{3}{4} (\wp_i - \wp_l) x_0^2.
\end{equation}
By iteration, the terms in (\ref{lim-f_i}) with $x_0$ becomes of order \emph{three or more}. Moreover the terms stabilized in finite powers only if it contain $x_\ell^{2k}$ for some $k \ge 1$. At the beginning, $\tilde f_i$ is not a Morse function since its quadratic part contains only two directions $x_i$ and $x_\ell$. But in the above limit (or completion), we get $\tilde f_i = x_i^2 - x_\ell^2 + x_\ell^2 g_i(x)$. In particular its behavior is controlled by the degenerate quadric 
$$
Q_i := x_i^2 - x_\ell^2 = (x_i + x_\ell)(x_i - x_\ell) =: L_i^+ L_i^-, \quad i = 1, \ldots, \ell - 1,
$$
with the global constraint $L_\ell := x_1 + \cdots + x_\ell = 0$.

The intersections at $Q$ is linearized to the union of $2^{\ell - 1}$ intersections of $\ell$ hyperplanes
$$
L_1^{\pm} = 0, L_2^{\pm} = 0, \cdots, L_{\ell - 1}^{\pm} = 0, L_\ell = 0.
$$ 
For each fixed choice of the set of $\ell$ hyperplanes, it is clear that $x_i = \mp x_\ell$ and hence $x_i = 0$ for all $i$ when $\ell$ is odd. The intersection is transversal since otherwise there will be non-trivial kernels. The proof is completed by noting the additive property of intersection numbers. 
\end{proof}

\subsection{Integrability for $\ell$ being even}

If $\ell = 2n$ is even, the system admits non-trivial kernels (non-transversal) if and only if there are precisely $n - 1$ equations $L_i^-$'s which are selected (so $x_i = x_\ell$). Moreover the kernel is one-dimensional given by (after reordering) 
$$
x_1 = \cdots = x_n = -x_{n + 1} = \cdots = -x_{2n},
$$ 
i.e., the line $t(1, \cdots, 1, -1, \cdots, -1)$, $t \in \Bbb C$.

This implies that the solutions variety $V = \{(A_1, \ldots, A_{2n}, B)\}$ is \emph{at most} a union of curves, all intersecting the infinity hyperplane at $Q$, and a finite number of points. 

There are special cases (primitive symmetric cases) where it is easy to conclude that $V$ contains curve components. 

\begin{example}[Center $o$ of $p_i$'s] \label{l=2n}
Let $\ell = 2n$ be even. 

For $n = 1$, we get $A_2 = -A_1$ and the two quadratic equations for $A_1$ and $A_2$ coincide since $\wp_{12} = \wp_{21}$. It is 
$$
A_1^2 = \zeta_{12} A_1 + B + \tfrac{3}{4} \wp_{12}.
$$

More generally, for $n \in \Bbb N$, if there is a \emph{center} $o \in T$ such that $\{p_i - o\} = \{-(p_i - o)\}$, or after reordering
$$
p_{n + i} - o = -(p_i - o)\quad \text{for} \quad i = 1, \ldots, n,
$$ 
(such a center $o$ always exists for $n = 1$) then there are special symmetric solutions with $A_{n + i} = -A_i$ for $i = 1, \ldots, n$. 

Indeed, equation (\ref{periodic}) is satisfied trivially, and the system (\ref{quadratic}) reduces to a system of $n$ quadratic equations on $n + 1$ variables $A_1, \ldots, A_n$ and $B$. This defines a curve $V_n \subset \Bbb C^n$ which is compactified to $\bar V_n \subset \Bbb P^n$ by adding the point $Q$ at infinity. 
\end{example}

Nevertheless, the curve components in $V$ might exist in the general non-symmetric cases. This requires a detailed study on the system in \eqref{infty-system}.

Let $C$ be the curve in $\Bbb C^{2n + 1}$ defined by the $2n$ quadratic equations in \eqref{infty-system} and $H$ be the hyperplane defined by the linear equation $\sum_{i = 1}^{2n} x_i = 0$ which will be called the \emph{global constraint}. To see if there are curve components in $V$ we need to analyze whether the kernel lines integrate to analytic curves near $Q$. Equivalently we ask if there are irreducible components of $C$ which lie in $H$ completely. In doing so, by \eqref{e:branch}, we take a branch of $x_0^{1/2}$ as the parameter $t$, and try to solve a curve germ of $C$ near $Q$ by (after another reordering by alternating signs for convenience):
\begin{equation} \label{e:series}
x_i(t) =(-1)^i \sum_{k = 1}^\infty a_{i, k} t^k, \qquad a_{i, 1} = 1,\qquad i = 1, \ldots, 2n,
\end{equation}
subject to the global constraints in all degrees
\begin{equation} \label{e:global}
\sum_{i = 1}^{2n} (-1)^i a_{i, k} = 0 \quad \mbox{for all $k \in \Bbb N$}. 
\end{equation}

Now (\ref{infty-system}) reads as, for $i = 1, \ldots, 2n$,
\begin{equation} \label{e:recur-k}
\Big(\sum_{k = 1}^\infty a_{i, k} t^k \Big)^2 = t^2 \sum_{j = 1}^{2n} \zeta_{ij} (-1)^j \sum_{k = 1}^\infty a_{j, k} t^k + t^2 + \tfrac{3}{4} \wp_i t^4,
\end{equation}
which leads to a recursive relation that uniquely determines $a_{i, k}$ inductively in $k$ and for all $i$, hence it determines the germ $(x_i(t))$. 

The process works for any of the $2^{2n}/2 = 2^{2n - 1}$ curve germ of $C$ near $Q$. But there are only $C^{2n}_n/2$ choices of such germs $(x_i(t))$ starting at $(x_i(0)) = Q = 0^{2n} \in \Bbb C^{2n + 1}$ whose tangent lines at $Q$ lie in $H$. So the problem is on the global constraints \eqref{e:global} for $k \ge 2$.

To simplify the appearance of various signs, we define
\begin{equation} \label{e:hat}
\hat \zeta_{ij} = (-1)^j \zeta_{ij}, \qquad \hat \zeta_i = \sum_{j} \hat \zeta_{ij},
\end{equation}
where we always set the meaningless terms to be zero: $\zeta_{ii} = 0 = \wp_{ii}$. 


The $t^2$ terms correspond trivially as both are just $t^2$. The $t^3$ terms give
\begin{equation} \label{e:a2}
2a_{i, 2} = \sum_j (-1)^j \zeta_{ij} a_{j, 1} = \hat \zeta_i.
\end{equation}
Then $2 \sum (-1)^i a_{i, 2} = \sum (-1)^{i + j} \zeta_{ij} = 0$ by the skew-symmetry of $\zeta_{ij}$. 

The $t^4$ terms give $2 a_{i, 3} + a_{i, 2}^2 = \sum_j \hat\zeta_{ij} a_{j, 2} + \tfrac{3}{4} \wp_i$, i.e.
\begin{equation} \label{e:a3}
a_{i, 3} = \tfrac{1}{4}\sum_{j, k} \hat \zeta_{ij} \hat \zeta_{jk} - \tfrac{1}{8}\hat \zeta_i^2 + \tfrac{3}{8} \wp_i.
\end{equation}
Then we require that
\begin{equation} \label{sum-ai3}
\begin{split}
&\quad \sum_i (-1)^i a_{i, 3}\\ &= \tfrac{1}{4} \sum_{i, j, k} (-1)^{i} \hat\zeta_{ij} \hat\zeta_{jk} - \tfrac{1}{8} \sum_{i, j, k} (-1)^{i} \hat\zeta_{ij} \hat\zeta_{ik} + \tfrac{3}{8} \sum_i (-1)^i \wp_i \\
&= \tfrac{3}{8} \Big(\sum_i (-1)^i\wp_i - \sum_i (-1)^i \hat\zeta_i^2 \Big)= 0,
\end{split}
\end{equation}
where we have used $\sum (-1)^{i + j + k} \zeta_{ij} \zeta_{jk} = -\sum (-1)^{i + j + k} \zeta_{ij} \zeta_{ik}$.

\begin{example} \label{e:n=2}
For $\ell = 4$ ($n = 2$), equation \eqref{sum-ai3} says that 
\begin{equation}
\begin{split}
2 (\wp_{24} - \wp_{13}) &= - (\zeta_{12} - \zeta_{13} + \zeta_{14})^2 + (-\zeta_{21} - \zeta_{23} + \zeta_{24})^2 \\
&\qquad - (-\zeta_{31} + \zeta_{32} + \zeta_{34})^2 + (-\zeta_{41} + \zeta_{42} - \zeta_{43})^2.
\end{split}
\end{equation} 
The latter is factorized into 
\begin{equation*}
\begin{split}
& (-2 \zeta_{21} - \zeta_{23} + \zeta_{24} - \zeta_{13} + \zeta_{14})(-\zeta_{23} + \zeta_{24} + \zeta_{13} - \zeta_{14}) \\
&\quad + (-\zeta_{41} + \zeta_{42} - 2 \zeta_{43} - \zeta_{31} + \zeta_{32})(-\zeta_{41} + \zeta_{42} + \zeta_{31} - \zeta_{32}) \\
&= (-2\zeta_{21} + 2 \zeta_{24} - 2 \zeta_{13} + 2 \zeta_{43}) (\zeta_{13} + \zeta_{24} - \zeta_{14} - \zeta_{23}) \\
&= 2 (\zeta_{24} - \zeta_{13} + \zeta_{12} - \zeta_{34}) (\zeta_{24} + \zeta_{13} - \zeta_{14} - \zeta_{23}).
\end{split}
\end{equation*}
Hence we require that
\begin{equation} \label{e:l=4}
\wp_{24} - \wp_{13} = (\zeta_{24} - \zeta_{13} + \zeta_{12} - \zeta_{34}) (\zeta_{24} + \zeta_{13} - \zeta_{14} - \zeta_{23}).
\end{equation}

It turn out that \eqref{e:l=4} is an identity. This can be seen by viewing both sides as elliptic functions in $z := p_2$. A Laurent/Taylor expansion shows that the principal parts (including the constant terms) of the common pole at $z = a_4$ coincide. (Here we need the fact that $(\zeta(a) + \zeta(b) + \zeta(c))^2 = \wp(a) + \wp(b) + \wp(c)$ if $a + b + c = 0$.) The extra poles at $z = p_1$ and $z = p_3$ are cancelled out by the corresponding vanishing of the other factor.

The compactified curve $\bar C \subset \Bbb P^{2n + 1}$ has degree $2^4 = 16$. It has $2^4/2 = 8$ tangent lines at $Q$. Among them, there are $C^4_2/2 = 3$ lines which lie in $H$ completely. By the identity \eqref{e:l=4}, each of the corresponding curve germs has intersection multiplicity with $H$ at $Q$ at least 4. 

We conclude that some component of $C$ lies in $H$. This follows from the fact that the total intersection multiplicity of $C$ at $Q$ is at least 
$$
(8 - 3) \times 1 + 3 \times 4 = 5 + 12 = 17 > 16 = \deg C.
$$ 
\end{example}

We remark that a straightforward, though notationally more involved, extension of the argument in Example \ref{e:n=2} shows that \eqref{sum-ai3} holds: 

\begin{lemma} \label{l:mQ>4}
The following elliptic function identity holds unconditionally:
\begin{equation} \label{ev-od}
\sum_{i} (-1)^i \wp_{i} \equiv 2 \Big(\sum_{i < j, \, \text{even}} \wp_{ij} - \sum_{i < j, \, \text{odd}} \wp_{ij}\Big) = \sum_i (-1)^i \hat\zeta_i^2. 
\end{equation} 
Hence the multiplicity of $(x_i(t))$ at $Q$ along each of the $C^{2n}_n$ choices of tangent directions is at least 4.
\end{lemma}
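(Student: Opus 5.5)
The plan is to prove \eqref{ev-od} as a purely formal consequence of the three-term identity already used in Example \ref{e:n=2},
$$
(\zeta(a) + \zeta(b) + \zeta(c))^2 = \wp(a) + \wp(b) + \wp(c) \qquad \text{whenever } a + b + c = 0,
$$
instead of redoing the pole-by-pole comparison for each $n$. The first equality in \eqref{ev-od} needs no work: $\wp_i = \sum_{j} \wp_{ij}$ and $\wp_{ij} = \wp_{ji}$, so $\sum_i (-1)^i \wp_i = \sum_{i<j} ((-1)^i + (-1)^j) \wp_{ij}$. Here the coefficient $(-1)^i+(-1)^j$ is $\pm 2$ when $i,j$ have the same parity and $0$ otherwise. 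It therefore suffices to show
$$
\sum_i (-1)^i \hat\zeta_i^2 = \sum_{i<j} \big((-1)^i + (-1)^j\big) \wp_{ij}.
$$

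Expand $\hat\zeta_i^2 = \sum_{j,k} (-1)^{j+k} \zeta_{ij}\zeta_{ik}$, where the convention $\zeta_{ii}=0$ restricts the sum to $j,k \ne i$, and split it into the terms with $j=k$ and those with $j \ne k$.

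The diagonal part is $\sum_i (-1)^i \sum_j \zeta_{ij}^2 = \sum_{i<j} ((-1)^i+(-1)^j)\zeta_{ij}^2$, using $\zeta_{ij}^2 = \zeta_{ji}^2$. In the off-diagonal part $i,j,k$ are distinct and the sign $(-1)^{i+j+k}$ is symmetric in the triple. So I group by unordered triples $\{i,j,k\}$, summing over the choice of apex and the ordered pair of the other two indices. Set $a = p_i - p_j$, $b = p_j - p_k$, $c = p_k - p_i$, so that $a+b+c=0$, and use the oddness of $\zeta$. The six terms then collapse to $-2(\zeta(a)\zeta(b)+\zeta(b)\zeta(c)+\zeta(c)\zeta(a))$. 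By the three-term identity this equals
$$
\zeta(a)^2+\zeta(b)^2+\zeta(c)^2 - \wp(a)-\wp(b)-\wp(c),
$$
that is, $\sum_{\text{pairs in the triple}} (\zeta_{\cdot\cdot}^2 - \wp_{\cdot\cdot})$, weighted by $(-1)^{i+j+k}$.

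Next I collect the coefficient of $\zeta_{ij}^2 - \wp_{ij}$ for a fixed pair $\{i,j\}$ over all triples containing it. It is $(-1)^{i+j}\sum_{k\ne i,j}(-1)^k$. Since $\sum_{k=1}^{2n}(-1)^k = 0$, this evaluates to $-((-1)^i+(-1)^j)$; this is exactly where evenness of $\ell = 2n$ enters. Adding the diagonal part, the $\zeta_{ij}^2$ terms cancel and what remains is $\sum_{i<j}((-1)^i+(-1)^j)\wp_{ij}$, which proves \eqref{ev-od}.

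The only delicate step is the sign bookkeeping in the six-term collapse; I would check it against the $n=2$ factorization in Example \ref{e:n=2}. For the multiplicity statement, \eqref{ev-od} turns \eqref{sum-ai3} into an identity, so the global constraint \eqref{e:global} holds for $k=1,2,3$. The case $k=1$ holds by the choice of a tangent direction in $H$, and the case $k=2$ holds by skew-symmetry, as shown after \eqref{e:a2}. Hence along each of the $C^{2n}_n$ germs \eqref{e:series} tangent to $H$, the function $f_0(x(t)) = \sum_i x_i(t)$ vanishes to order at least $4$ in $t$. Equivalently, the germ meets $H$ at $Q$ with multiplicity at least $4$.
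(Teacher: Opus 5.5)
Your proof is correct, and it takes a different route from the paper's. The paper only says that the identity follows by a ``straightforward, though notationally more involved'' extension of Example~\ref{e:n=2}. That argument is analytic: first a factorization (exhibited only for $n=2$), then regarding both sides as elliptic functions of a single point $p_2$, matching principal parts at the poles (the three-term identity enters only for the constant terms), and concluding by Liouville's theorem.

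You use the three-term identity globally and purely algebraically:
\begin{itemize}
\item the cross terms of $\sum_i(-1)^i\hat\zeta_i^2$ regroup over unordered triples $\{i,j,k\}$;
\item each triple contributes $(-1)^{i+j+k}$ times the sum of $\zeta_{\cdot\cdot}^2-\wp_{\cdot\cdot}$ over its three pairs;
\item the resulting coefficient $(-1)^{i+j}\sum_{k\neq i,j}(-1)^k=-((-1)^i+(-1)^j)$ exactly cancels the diagonal $\zeta_{ij}^2$ terms.
\end{itemize}
I checked the sign bookkeeping in both the six-term collapse and this coefficient, and it is correct.

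What your route buys is a complete proof, uniform in $n$, with no pole analysis and no need to find a factorization like the one in Example~\ref{e:n=2}. It also shows that the only input beyond the addition law is $\sum_k\epsilon_k=0$ for the sign vector. So it handles every balanced sign choice, and hence all $C^{2n}_n$ tangent directions, at once, and it pinpoints where the evenness of $\ell$ matters.

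What the paper's analytic route buys is a method that does not depend on spotting such a combinatorial regrouping. That makes it the natural tool for the higher compatibility conditions such as \eqref{sum-ai4}, where no analogous triple identity is apparent.

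Your deduction of the multiplicity bound is the same as the paper's: the coefficients of $t,t^2,t^3$ in $\sum_i x_i(t)$ vanish. The step to intersection multiplicity is legitimate because $x_1(t)=-t+O(t^2)$, so $t$ is a uniformizing parameter of the germ.
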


However this is not enough to conclude that $V$ contains non-trivial curve components $V_0 \subset C$ for any $n \ge 3$. Indeed, the conclusion will follow if 
$$
(\tfrac{1}{2} 2^{2n} - \tfrac{1}{2} C^{2n}_n) + \tfrac{1}{2} C^{2n}_n \times 4 > 2^{2n}.
$$ 
That is, if $3 C^{2n}_n > 2^{2n}$. This fails for $n \ge 3$ by a direct check.

For general $t^{2 + k}$ terms with $k \ge 3$, we have the recursive formula
\begin{equation} \label{e:ak+1}
a_{i, k + 1} = \tfrac{1}{2} \sum_j \hat\zeta_{ij} a_{j, k} - \tfrac{1}{2} \sum_{p = 2}^k a_{i, p} a_{i, k + 2 - p}.
\end{equation} 
So the global constraint is
$$
0 = \sum_i (-1)^i a_{i, k + 1} = \tfrac{1}{2} \sum_{i, j} (-1)^{i} \hat\zeta_{ij} a_{j, k} - \tfrac{1}{2} \sum_{p = 2}^k \sum_i (-1)^i a_{i, p} a_{i, k + 2 - p}.
$$

For $k = 3$ both summands in the RHS are equal since by \eqref{e:a2}
$$
-\sum_i (-1)^i a_{i, 2} a_{i, 3} = -\tfrac{1}{2}\sum_{i, j} (-1)^{i} \hat\zeta_{ij} a_{i, 3} = \tfrac{1}{2}\sum_{i, j} (-1)^{j} \hat\zeta_{ji} a_{i, 3}.
$$
Hence the equation $0 = \sum (-1)^i a_{i, 4} = \sum_{i, j} (-1)^{i} \hat\zeta_{ij} a_{j, 3}$ becomes
\begin{equation*} 
0 = \tfrac{1}{4}\sum_{i, j, k, m} (-1)^i \hat\zeta_{ij} \hat \zeta_{jk} \hat\zeta_{km} - \tfrac{1}{8} \sum_{i, j} (-1)^i \hat \zeta_{ij} \hat\zeta_i^2 + \tfrac{3}{8} \sum_{i, j} (-1)^i \hat\zeta_{ij} \wp_i.
\end{equation*}
The first term equals $\frac{1}{4} \sum_{i, j, k, m} (-1)^{i + j + k + m} \zeta_{ij} \zeta_{jk} \zeta_{km}$ which vanishes by reversing the indices $(i, j, k, m) \mapsto (m, k, j, i)$. Hence the constraint becomes 
\begin{equation} \label{sum-ai4}
\sum_{i} (-1)^i \hat\zeta_i^3 = 3\sum_{i} (-1)^i \hat\zeta_{i} \wp_i.
\end{equation}

Similarly, for $k \ge 4$, the global constraint can be written as
$$
\sum_i (-1)^i a_{i, k + 1} = \sum_{i, j} (-1)^{i} \hat\zeta_{ij} a_{j, k} - \tfrac{1}{2} \sum_{p = 3}^{k - 1}\sum_i (-1)^i a_{i, p} a_{i, k + 2 - p} = 0.
$$

\begin{question}
What are the precise conditions on $p_i$'s in solving these compatibilities equations $\sum (-1)^i a_{i, k} = 0$ for all $k$? Is equation \eqref{sum-ai4} (for $t^5$) an elliptic function identity?
\end{question}

We conclude this section by stating a conjecture on the variety $V$ even for the non-primitive case:

\begin{conjecture} \label{c:conj}
For any $L = \sum_{i = 1}^{N} \ell_i p_i$ with $\ell = \deg L$ being even. The log-free variety $V$ consists of a finite number of curves and points. Moreover, there are always non-trivial curve component $V_0 \subset V$. 
\end{conjecture}
 
With the existence of the curve component $V_0$, we may then defined the \emph{generalized Lam\'e curve} to be the branched double cover $Y_L \to V_0$ which parametrizes the log-free solutions of the generalized Lam\'e equation \eqref{gLame}. The next question is then to seek for possible extensions of the theory of pre-modular forms as in the $N = 1$ case.

\appendix

\section{A remark on the classical approach to $Z_n$} \label{classical-Z}
\setcounter{equation}{0}

A ``two stpes'' approach to the determination of $Z_2$ and $Z_3$ based on the addition law \eqref{z^2} and a classical cubic identity \eqref{z^3} of elliptic functions was developed in \cite{Dahmen}. One might hope that a more sophisticated application of the Frobenius--Stickelberger type identities (e.g.~\cite[p.458]{Whittaker}) may lead to a construction of $Z_n$ for $n \ge 4$. The purpose of this appendix is to show that this classical approach fails for all $n \ge 4$ (see Proposition \ref{n>3}).

\subsection{Explicit constructions for $n = 2, 3$} \label{explicit-mod}

We describe the first two cases $n = 2, 3$ in this subsection using a ``two steps'' procedure. 

\begin{example} [$n = 2$] \label{n=2}
For $\z = \z_2(a_1, a_2)$, we have on $X_2$:
$$
W_2(\z) = \z^3 - 3\wp(a_1 + a_2) \z - \wp'(a_1 + a_2) = 0.
$$
This is essentially equivalent to the addition law:
\begin{equation} \label{z^2}
\z^2 = \wp(a_1 + a_2) + \wp(a_1) + \wp(a_2).
\end{equation}
In particular, the weight 3 pre-modular form is 
$$
Z_2(\sigma; \tau) = W_2(Z) = Z^3(\sigma) - 3\wp(\sigma) Z(\sigma) - \wp'(\sigma).
$$
\end{example}

We start by applying the \emph{symmetrized operator} $\delta := \tfrac{1}{2}(\p_{a_1} + \p_{a_2})$ to $\z := \zeta(\sigma) - \zeta(a_1) - \zeta(a_2)$ sucessively to get
\begin{equation*}
\begin{split}
\delta \z &= \tfrac{1}{2}(\wp(a_1) + \wp(a_2)) - \wp(\sigma), \\
 \delta^2 \z &= \tfrac{1}{4}(\wp'(a_1) + \wp'(a_2)) - \wp'(\sigma).
\end{split}
\end{equation*}

We rewrite (\ref{z^2}) in the following \emph{admissible} form:
\begin{equation} \label{A2}
\z^2 = A_2(\delta\z) := 3 \wp(\sigma) + 2\delta\z.
\end{equation}
Then $2\z \delta \z  = 3\wp'(\sigma) + 2 \delta^2\z = \wp'(\sigma) + \tfrac{1}{2}(\wp'(a_1) + \wp'(a_2))$. Hence
$$
\z^3 = 3\wp(\sigma) \z + 2\z \delta\z = 3\wp(\sigma) \z + \wp'(\sigma) + \tfrac{1}{2}(\wp'(a_1) + \wp'(a_2)).
$$

On $X_2$ this reduces to $W_2(\z) = 0$. We emphasize that only (\ref{A2}) is used.

\begin{example} [$n = 3$] \label{n=3}
For $\z = \z_3(a)$, we have on $X_3$:
$$
W_3(\z) = \z^6 - 15\wp \z^4 - 20 \wp' \z^3 + (\tfrac{27}{4} g_2 - 45 \wp^2) \z^2 - 12\wp' \wp \z - \tfrac{5}{4} \wp'^2 = 0.
$$
Here $\wp$ and $\wp'$ are evaluated at $\sigma = \sum_{i = 1}^3 a_i$. 

Hence, $Z_3(\sigma; \tau) = W_3(Z)$ is of weight 6.
\end{example}

The derivation of $W_3(z)$ is more involved. It consists of two steps. The first step is the following classical identity. We supply a detailed proof of it since a variant of the proof will be used for our later discussions on the general cases $n \ge 4$.

\begin{lemma} [c.f.~{\cite[p.459]{Whittaker}}] \label{poly-lemma3}
For $\z = \zeta(\sigma) - \sum_{i = 1}^3 \zeta(a_i)$ with $\sigma = \sum_{i = 1}^3 a_i$,
\begin{equation} \label{z^3}
\z^3 = 3(\wp(\sigma) + \sum \wp(a_i)) \z + (\wp'(\sigma) - \sum \wp'(a_i)).
\end{equation}
\end{lemma}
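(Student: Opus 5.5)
We prove \eqref{z^3} by regarding both sides as elliptic functions of a single variable and comparing principal parts. Fix $a_2, a_3$ generic and set $z := a_1$, so that $\sigma = z + a_2 + a_3$ and
$$
\z(z) = \zeta(z + a_2 + a_3) - \zeta(z) - \zeta(a_2) - \zeta(a_3).
$$
Since $\zeta(z + \omega) - \zeta(z) = \eta_\omega$ for both $\zeta$ terms containing $z$, the quasi-periods cancel and $\z$ is elliptic in $z$. Hence
$$
D(z) := \z^3 - 3\Big(\wp(\sigma) + \sum \wp(a_i)\Big)\z - \Big(\wp'(\sigma) - \sum \wp'(a_i)\Big)
$$
is elliptic in $z$. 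Its only possible poles are at $z = 0$ and $z = -(a_2 + a_3)$. The plan is to show that $D$ has no pole at either point, so that $D$ is a constant, and then to show that this constant is zero.

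For the pole at $z = 0$, write $\z = -1/z + c + O(z)$ with $c = \zeta(a_2 + a_3) - \zeta(a_2) - \zeta(a_3)$. Expand $\wp(\sigma)$ and $\wp'(\sigma)$ around $\sigma_0 = a_2 + a_3$, and use $\wp(z) = z^{-2} + O(z^2)$ and $\wp'(z) = -2z^{-3} + O(z)$. We then match the coefficients of $z^{-3}$, $z^{-2}$ and $z^{-1}$.
\begin{itemize}
\item The $z^{-3}$ coefficient is $-1 + 3 - 2 = 0$.
\item The $z^{-2}$ coefficient is $3c - 3c = 0$.
\item The $z^{-1}$ coefficient reduces to $-3c^2 + 3(\wp(\sigma_0) + \wp(a_2) + \wp(a_3))$ plus derivative terms that cancel. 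It therefore vanishes by the addition law \eqref{z^2} applied to the pair $(a_2, a_3)$, namely $c^2 = \wp(a_2 + a_3) + \wp(a_2) + \wp(a_3)$.
\end{itemize}

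The pole at $z = -(a_2 + a_3)$ needs no separate computation. Under the substitution $a_1 \mapsto -\sigma$, i.e.\ $(a_1, a_2, a_3, \sigma) \mapsto (-\sigma, a_2, a_3, -a_1)$, the function $\z$ changes sign, using oddness of $\zeta$. The expression $\wp(\sigma) + \sum \wp(a_i)$ is invariant, and $\wp'(\sigma) - \sum\wp'(a_i)$ changes sign. Thus $D \mapsto -D$, and this substitution exchanges the two poles, so regularity at the second pole follows from regularity at the first.

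This leaves $D$ constant in $a_1$. By the symmetry of $D$ in $a_1, a_2, a_3$, the same argument shows that $D$ is a constant independent of all three variables. To identify the constant, let $a_3 \to 0$. Then $\z = \z_2(a_1, a_2) - 1/a_3 + O(a_3)$, and by the $n = 2$ computation above the whole expression degenerates consistently. Alternatively, take $a_1 = a_2 = -a_3$-type degenerations, or evaluate at $\sigma = 0$ symmetric configurations. For instance, at $a_3 = -a_1$, where $\sigma = a_2$ and $\z = -\zeta(a_1) + \zeta(a_1) = 0$ up to terms, the right side becomes $\wp'(a_2) - \wp'(a_1) - \wp'(a_2) + \wp'(a_1) = 0$ while $\z = \zeta(a_2) - \zeta(a_1) - \zeta(a_2) + \zeta(a_1) = 0$. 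This gives $D = 0$.

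The main obstacle is the $z^{-1}$ coefficient at $z = 0$, where the Taylor terms of $\wp(\sigma)$ and $\wp'(\sigma)$ must be tracked carefully against the cross terms of $\z^3$. That calculation is the step where the two-point addition law \eqref{z^2} must enter exactly.
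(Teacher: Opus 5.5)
Your proof is correct. Its core is the same as the paper's: treat both sides as elliptic in one of the $a_i$, and match the principal part at the pole where that variable vanishes. The residue matches exactly by the addition law \eqref{z^2} for the remaining pair. Regularity then transfers to the other pole via the involution $a_i \mapsto -\sigma$.

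Where you genuinely differ is in fixing the additive constant.

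\begin{itemize}
\item \textbf{The paper's route.} It carries the expansion at $s=0$ one order further and matches constant terms. This requires the $n=2$ cubic relation \eqref{2int}, which it derives from the non-monic relation \eqref{2'}, obtained by applying the symmetrized derivative $\delta$ to the addition law. The paper stresses this step on purpose, because it is the prototype for the $\delta$-manipulations used afterwards for $W_3$ and in Proposition \ref{n>3}.
\item \textbf{Your route.} You evaluate at $a_1=-a_3$. There $\sigma=a_2$, $\z=0$ and $\wp'(\sigma)-\sum\wp'(a_i)=0$, so $D=0$. For generic $a_2,a_3$ this point avoids the poles $a_1\in\Lambda$ and $a_1\in -(a_2+a_3)+\Lambda$. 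So once $D$ is known to be constant in $a_1$, you get $D(\,\cdot\,;a_2,a_3)\equiv 0$ directly.
\end{itemize}

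Your route is shorter and needs only \eqref{z^2}. What it gives up is the link to the later symmetric-derivative method.

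Two corrections:
\begin{itemize}
\item Under $a_1\mapsto-\sigma$ (so $\sigma\mapsto -a_1$), the oddness of $\zeta$ and $\wp'$ makes $\z$ and $\wp'(\sigma)-\sum\wp'(a_i)$ \emph{invariant}, not sign-changing. Hence $D\mapsto D$, not $-D$. This is harmless, since regularity transfers between the two poles either way.
\item The step showing $D$ is constant in all three variables is unnecessary, and so are the vague alternatives ($a_3\to 0$ ``degenerating consistently'', ``$\sigma=0$ configurations''). Drop them; the $a_1=-a_3$ evaluation alone finishes the proof.
\end{itemize}
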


\begin{proof}
We will prove it by viewing both sides as functions of $s = a_3$ and by comparing the principal parts on both sides. In doing so we emphasize that the case $n = 2$ is used in an essential way.

For a better presentation on signs we work on $\f = -\z_3$. Let $\sigma_2 = a_1 + a_2$ and $\f_2 = -\z_2 = \zeta(a_1) + \zeta(a_2) - \zeta(\sigma_2)$. In the following, all elliptic functions are evaluated at $s = \sigma_2$ if not written explicitly. Then
\begin{equation*}
\begin{split}
\f = \zeta(a) + \zeta(b) + \zeta(s) - \zeta(\sigma_2 + s) = \frac{1}{s} + \f_2 + \wp\, s + \tfrac{1}{2} \wp' s^2 + O(s^3),
\end{split}
\end{equation*}
by noting $\zeta(s) = 1/s + O(s^3)$.

We want to compare
\begin{equation*}
\begin{split}
\f^3 = \frac{1}{s^3} + \frac{3\f_2}{s^2} + \frac{3\f_2^2 + 3\wp}{s} +\f_2^3 + 6\f_2 \wp  + \tfrac{3}{2} \wp' + O(s)
\end{split}
\end{equation*}
with (all summations are for $i = 1, 2$) 
\begin{equation*}
\begin{split}
3&\Big(\sum \wp(a_i) + \wp(s) + \wp(\sigma_2 + s)\Big) \f + \Big(\sum \wp'(a_i) + \wp'(s) - \wp'(\sigma_2 + s)\Big) \\
&= 3\Big(\frac{1}{s^2} + (\sum \wp(a_i) + \wp) + \wp' s\Big) \Big(\frac{1}{s} + \f_2 + \wp\, s + \tfrac{1}{2} \wp' s^2\Big) \\
&\qquad + \frac{-2}{s^3} + (\sum \wp'(a_i) - \wp') + O(s) \\
&= \frac{1}{s^3} + \frac{3\f_2}{s^2} + \frac{3(\sum \wp(a_i) + \wp) + 3\wp}{s} \\
&\qquad + 3(\sum \wp(a_i) + \wp) \f_2 + \tfrac{9}{2} \wp' + (\sum \wp'(a_i) - \wp') + O(s).
\end{split}
\end{equation*} 

Now the case for $n = 2$ in (\ref{z^2}) says that $\f_2^2 = \sum \wp(a_i) + \wp$. Thus the $s^{-1}$ terms  match and the equality for the constant terms is equivalent to
\begin{equation} \label{2int}
\f_2^3 = 3\wp\, \f_2 - (\wp' + \tfrac{1}{2} \sum \wp'(a_i)).
\end{equation}
By symmetric differentiation, the $n = 2$ case implies also
\begin{equation} \label{2'}
\wp' + \tfrac{1}{2} \sum \wp'(a_i) = \delta (\f_2^2) = 2\f_2\delta \f_2 = 2\f_2 (\wp - \tfrac{1}{2}\sum \wp(a_i)).
\end{equation}
Thus the right hand side of (\ref{2int}) equals
$$
\f_2 (\wp + \sum \wp(a_i)) = \f_2^3,
$$
which is precisely the left hand side. 

This identifies the principal parts at the pole $s = 0$. For the other pole $s = -(a_1 + a_2)$ the comparison follows from the case $s = 0$ under the symmetry $s \mapsto -(a_1 + a_2) - s$. This proves the lemma.
\end{proof}

The important thing we learn from the above proof is in the last step:
The formula (\ref{2'}) is a non-monic polynomial relation for $\f_2$ which has lower degree than the monic one (whose minimal degree is 2). Though (\ref{2'}) is merely a symmetric analogue of the addition law for $\zeta$: \small
$$
\z_2 = \zeta(a_1 + a_2) - \zeta(a_1) - \zeta(a_2) = \frac{1}{2} \frac{\wp'(a_1) - \wp'(a_2)}{\wp(a_1) - \wp(a_2)},
$$ \normalsize
it allows us to deduce higher monic polynomial relations like (\ref{2int}). 

In the same way, by applying $\delta := \tfrac{1}{3}\sum_{i = 1}^3\p_{a_i}$ to (\ref{z^3}), we may compute $\delta\z^3 = 3\z^2 \delta \z$ in both ways to get a non-monic degree 2 relation \small
\begin{equation*}
\begin{split}
(\wp - \tfrac{1}{3} \sum \wp_i) \z^2 = - (\wp' - \tfrac{1}{3} \sum \wp_i') \z - \tfrac{1}{3} (\wp'' - \tfrac{1}{3} \sum \wp_i'') + (\wp + \sum \wp_i)(\wp - \tfrac{1}{3} \sum \wp_i).
\end{split}
\end{equation*} \normalsize
Here $\wp_i := \wp(a_i)$ and the sum is from $1$ to $3$.

By multiplying $\z$ to (\ref{z^3}) and replace $\z^2$ terms which involve $\sum \wp_i$ by the above relation, we get a degree 4 monic relation \small
$$
\z^4 = 12 \wp \z^2 + (10\wp' - 2\sum \wp'_i) \z + 3(\wp'' - \tfrac{1}{3} \sum \wp_i'') - 9(\wp + \sum \wp_i)(\wp - \tfrac{1}{3} \sum \wp_i).
$$ \normalsize
It is unclear if we may proceed in this way to eventually get an expression involving $\sigma$ only. Our second step towards the proof of Example \ref{n=3} is a more systematic usage of the symmetrized derivatives.

\begin{proof}[Proof of Example \ref{n=3}]
Before we apply $\delta := \tfrac{1}{3}\sum_{i = 1}^3\p_{a_i}$ to (\ref{z^3}) successively, we need to first have a good sense about the expression $\delta^r \z$ for $r \in \Bbb N$. We compute
\begin{equation} \label{d^r}
\begin{split}
\delta \z &= \tfrac{1}{3} \sum \wp(a_i) - \wp(\sigma), \\
\delta^2 \z &= \tfrac{1}{9} \sum \wp'(a_i) - \wp'(\sigma), \\
\delta^3 \z &= \tfrac{2}{9} \sum \wp^2(a_i) - \tfrac{1}{18} g_2 - \wp''(\sigma), \\
\delta^4 \z &= \tfrac{4}{27} \sum \wp(a_i) \wp'(a_i) - \wp'''(\sigma).
\end{split}
\end{equation} 
The key observation is that, $\delta^2 \z$ and $\delta^4 \z$ are good terms (which depend on $\sigma$ only) allowed in our calculations, while $\delta \z$ and $\delta^3 \z$ are terms not allowed in our final formula which need to be replaced by known terms. 

Now, in terms of $\delta^r \z$ in (\ref{d^r}), we rewrite (\ref{z^3}) as
\begin{equation} \label{zdz}
\z^3 = A_3(\delta^*\z) := 12 \wp \z + 9 \z\delta\z -8 \wp' - 9 \delta^2 \z.
\end{equation}
Here, and in the following, all $\wp$ and its derivatives are evaluated at $\sigma$. This implies that $\z\delta \z$ is indeed a sum of good terms. 

By applying $\delta$ to (\ref{zdz}) we get
\begin{equation} \label{zd^3z}
\begin{split}
3\z^2 \delta \z = 12\wp' \z + 12 \wp \delta\z + 9(\delta\z)^2 + 9\z\delta^2 \z - 8\wp'' - 9 \delta^3 \z.
\end{split}
\end{equation}
By multiplying $\z^2$ to (\ref{zd^3z}), it follows that $\z^2 \delta^3 \z$ is a sum of good terms.

By applying $\delta$ to (\ref{zd^3z}) we get
\begin{equation} \label{3-final}
\begin{split}
6\z(\delta\z)^2 + 3\z^2 \delta^2 \z &= 12 \wp'' \z + 24 \wp' \delta \z + 12 \wp \delta^2\z \\
&\quad + 27 \delta \z \delta^2 \z + 9 \z \delta^3 \z -8 \wp''' - 9 \delta^4 \z.
\end{split}
\end{equation}
By multiplying $\z$ to (\ref{3-final}) we find that all the terms appeared are now good terms. Hence it give rise to the polynomial $W_3(z)$ by explicit substitution.

In fact in this last step we have $\delta^2 \z = -\wp'$ and $\delta^4 \z = -\wp'''$ on the curve $X_3$. Thus we have
\begin{equation*}
\begin{split}
9\z \delta \z &= \z^3 - 12 \wp \z - \wp', \\
9^2\z^2 \delta^3 \z &= -2 \z^6 + 24 \wp \z^4 + 28 \wp' \z^3 - 72 \wp'' \z^2 +12 \wp \wp' \z + \wp'^2.
\end{split}
\end{equation*}
Then we get $W_3(z)$ by a straightforward manipulation with (\ref{3-final}). 
\end{proof}

\subsection{A remark on $n \ge 4$} \label{non-induction}

A closer look at the proof of Example \ref{n=3} shows that the overall important equation to start with is not the classical polynomial identity (\ref{z^3}) in $\z$. Instead, the polynomial equation (\ref{zdz}) on $\z$ and $\delta \z$ with \emph{admissible coefficients} (depending only on $\sigma$) is what we really need for the proof. (For simplicity we use the notation $A(\delta^*\z)$ for it.)

To the authors' knowledge, historically a reasonably clean polynomial identity of degree $n$ with symmetric coefficients in $a_i$'s like (\ref{z^2}) and (\ref{z^3}) was unknown for $n \ge 4$. For $n = 4$, naive generalization of the proof of Lemma \ref{poly-lemma3} to get a degree 4 polynomial fails immediately. Thus we try to get an \emph{admissible equation} in $\z$ and $\delta^i \z$'s instead. (These two type of expressions are equivalent by (\ref{d^rz}) below.) As a result, we will be able to prove that the degree 4 polynomial indeed does not exist.

Notice that while $\z = \zeta(\sum a_i) - \sum \zeta(a_i)$ is symmetric in the last variable $s = a_n$ under $s \mapsto - \sigma_n = -\sigma_{n - 1} - s$, 
$$
\delta \z = \frac{1}{n} \sum\nolimits_{i = 1}^n \wp(a_i) - \wp(\sigma_n) 
$$
breaks such a symmetry. It thus distinguishes the two poles $s = 0$ and $s = -\sigma_{n - 1}$ which is a key property we shall use. 

As usual, we have ${\rm wt}(\wp^{(j)}) = j + 2$, ${\rm wt}(\z) = 1$ and ${\rm wt}(\delta\z) = 2$. Since
\begin{equation} \label{d^rz}
\delta^{r + 1} \z = \frac{1}{n^{r + 1}} \sum\nolimits_{i = 1}^n \wp^{(r)}(a_i) - \wp^{(r)}(\sigma_n),
\end{equation}
we see that $\sum \wp^{(r)}(a_i) = n^{r + 1}(\delta^{r + 1} \z + \wp^{(r)}(\sigma_n))$ is admissible of weight $r + 2$ for all $r \ge 0$. By the elementary properties of $\wp$ function, this is equivalent to that $\sum \wp^j(a_i)$ (of weight $2j$) and $\sum \wp^j(a_i) \wp'(a_i)$ (of weight $2j + 1$), $j \ge 0$, are all admissible. (We notice that admissible terms of odd weights are \emph{good terms} when we restrict to $X_n$.) As before, we denote by $\wp$ (without variable) the admissible term $\wp(\sigma_n)$, and similarly for other elliptic functions.

We have already shown in (\ref{A2}) and (\ref{zdz}) that 
\begin{equation*}
\begin{split}
\z_2^2 &= 2\delta_2 \z_2 + 3\wp(\sigma_2), \\
\z_3^3 &= 9\z_3 \delta_3 \z_3 - 9\delta_3^2 \z_3 + 12\wp(\sigma_3) \z_3 - 8 \wp'(\sigma_3).
\end{split}
\end{equation*}
However, a naive generalization of this is not true for $n \ge 4$:

\begin{proposition} \label{n>3}
For all $n \in \Bbb N_{\ge 4}$, there is no admissible polynomial equation
$$
\z^n = A_n(\delta^*\z).
$$
Here $A_n$ is homogeneous of weight $n$ with coefficients in $\Bbb Q[\wp^{(j)}]_{j = 0, \ldots, n - 2}$. 
\end{proposition}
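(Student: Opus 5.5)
Assume such an admissible identity $\z^n = A_n(\delta^*\z)$ exists for some $n \ge 4$. The plan is to regard both sides as elliptic functions of the last variable $s = a_n$, with $a_1, \ldots, a_{n-1}$ generic and fixed, and to compare principal parts at $s = 0$ and at $s = -\sigma_{n-1}$, as in the proof of Lemma \ref{poly-lemma3}. The essential input is the asymmetry noted above. The function $\z$ is invariant under $s \mapsto -\sigma_{n-1} - s$, but $\delta\z = \frac{1}{n}\sum \wp(a_i) - \wp(\sigma_n)$ is not: near $s = 0$ it has principal part $\frac{1}{n} s^{-2}$, while near $s = -\sigma_{n-1}$ it has principal part $-(s + \sigma_{n-1})^{-2}$. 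More generally, $\delta^{r+1}\z$ carries the factor $n^{-(r+1)}\wp^{(r)}(s)$ at one pole and $-\wp^{(r)}(\sigma_n)$ at the other. The admissible coefficients $\wp^{(j)}(\sigma_n)$ have poles only at $s = -\sigma_{n-1}$.

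\emph{Step 1.} Write $A_n$ as a finite sum of weighted monomials $c\, \z^{m_0}(\delta\z)^{m_1}\cdots(\delta^r\z)^{m_r}$ with total weight $n$, where each coefficient $c$ is a polynomial in the $\wp^{(j)}(\sigma_n)$. Both sides have a pole of order exactly $n$ at $s = 0$ and at $s = -\sigma_{n-1}$, with leading coefficients $1$ and $(-1)^n$ respectively, since $\z \sim 1/s$ and $\z \sim 1/(s + \sigma_{n-1})$ up to sign. Each monomial of weight $n$ contributes at order $s^{-n}$ at the pole $0$ a coefficient $c_0 \cdot \prod (\text{local constants})$. The local constant is $1$ for $\z$ and $n^{-(r+1)}(-1)^{r}(r+1)!$ for $\delta^{r+1}\z$. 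At the pole $-\sigma_{n-1}$ the corresponding constants are $\pm 1$ for $\z$ and $-(-1)^r (r+1)!$ for $\delta^{r+1}\z$. Here $c_0$ is the constant, weight-zero part of the coefficient, because coefficients of positive weight are regular at $s = 0$ and cannot reach the top order.

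The two leading-order matchings then give two linear equations in the unknown constants $c_\alpha$ attached to monomials with purely numerical coefficients. The same bookkeeping at the next orders uses the factorization by $n^{-(r+1)}$ versus $-1$ per factor $\delta^{r+1}\z$. This produces a system indexed by the partitions $\alpha$ of $n$, recording how the weight is distributed among the $\delta^{r}\z$, with weights $n^{-|\alpha|}$ or $(-1)^{\ell(\alpha)}$.

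\emph{Step 2.} Impose the subleading coefficients, using the expansions $\z = 1/s + \f_{n-1}' + \wp(\sigma_{n-1}) s + \cdots$ and their analogues for $\delta^r\z$. These involve $\z_{n-1}$ and $\delta\z_{n-1}$. This is the inductive structure that made $n = 2, 3$ work. Genericity of $a_1, \ldots, a_{n-1}$ together with Theorem \ref{t:Wn}(2) then lets us treat $\z_{n-1}$ as algebraically independent of the admissible quantities in $\sigma_{n-1}$ below degree $\tfrac{1}{2}(n-1)n$. So each power of $\z_{n-1}$ must match separately, giving more linear constraints.

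\emph{Step 3.} Show that the resulting linear system in the $c_\alpha$ is inconsistent for $n \ge 4$. The expected mechanism is this: the pole at $0$ weights each monomial by $n^{-|\alpha|}$, with $|\alpha|$ the total $\delta$-order, whereas the pole at $-\sigma_{n-1}$ weights it essentially by signs. Matching both forces a Vandermonde-type system in the powers $n^{-k}$. For $n \le 3$ the number of available monomials (partitions of $n$) suffices, but for $n \ge 4$ the equations coming from the expansion orders $s^{-n}, \ldots, s^{0}$ outnumber the independent monomials. I expect this last step to be the main obstacle, namely extracting a clean contradiction rather than a large linear system.

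My first attempt will be to restrict to the top two orders and to the coefficient of the highest power of $\z_{n-1}$, which should yield a small explicit system. I will try to exhibit a contradiction concretely for $n = 4$ by computation. I will then look for an argument by induction or by degree counting on $n$, perhaps comparing with $\deg W_n = \tfrac{1}{2}n(n+1) > n$, to extend it to all $n \ge 4$.
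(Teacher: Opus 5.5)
You start in the right place: expanding in $s = a_n$ at $s = 0$, so that the Laurent coefficients are expressed through $\z_{n-1}$, $\delta^k\z_{n-1}$ and $\wp^{(j)}(\sigma_{n-1})$. But there is a genuine gap. Step 3 is only an expectation, and the mechanism in Steps 1--2 that should produce it is unsound.

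The independence you invoke in Step 2 is false here. Theorem \ref{t:Wn}(2) concerns $\z_n$ restricted to the curve $\overline X_n$, as a generator of $K(\overline X_n)$ over $K(E)$. In your setting $a_1,\ldots,a_{n-1}$ are free, and the Laurent coefficients contain $\delta^k\z_{n-1}$. These are tied to $\z_{n-1}$ by low-degree admissible relations such as \eqref{A2} and \eqref{zdz}. The same reasoning applied to $n=3$ would refute the true identity \eqref{zdz}. At $s = a_3 = 0$ its $s^0$ coefficient contains $\z_2^3$ on the left, while the right side, of degree one in $\z_3$, yields only $\z_2\,\delta\z_2$ and lower terms. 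The two sides agree only after you use \eqref{A2} and its $\delta$-derivative, exactly as in \eqref{2int}--\eqref{2'} in the proof of Lemma \ref{poly-lemma3}. For the same reason no comparison with $\deg W_n$ can work: $\deg W_3 = 6 > 3$, yet $A_3$ exists.

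The second pole also does not behave as you describe. At $s=-\sigma_{n-1}$ the coefficients $\wp^{(j)}(\sigma_n)$ have poles of order $j+2$, equal to their weight. So monomials with non-constant coefficients also reach order $n$ there. The leading equation at that pole is therefore not a relation among the numerical $c_\alpha$ alone, and no Vandermonde system in $n^{-k}$ appears. In any case, counting equations against unknowns would not establish inconsistency.

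The paper uses only the pole $s=0$, where the coefficients are regular, and treats the lower identities as rewriting rules rather than as sources of independence. Two ideas are missing from your plan.

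\emph{First, a downward induction.} The residue ($s^{-1}$ coefficient) of $\z_n^n = A_n(\delta^*\z_n)$ at $a_n=0$ is itself a monic admissible identity $\z_{n-1}^{n-1} = A_{n-1}(\delta^*\z_{n-1})$. Hence existence for $n$ implies existence for $n-1$, and it suffices to exclude $n=4$.

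\emph{Second, an explicit contradiction for $n=4$.} Write the nine-parameter general form of $A_4$ and read off $a=24$, $b=-64$, $c=64$, $d=-48$ from the $s^{-4}, s^{-3}, s^{-2}$ terms. Comparing the residue with \eqref{zdz} forces $e=30$ and $g=-60$, hence $f=-120$. The constant term then gives \eqref{const}. Eliminate $\z^2\delta\z$ there by the symmetric derivative \eqref{dz^3}, and compare with $\z$ times \eqref{zdz} treated the same way (\eqref{z4-1} versus \eqref{z4-2}). This yields $60\wp'(\sigma)\z_3$ equal to a function regular at $a_3=0$, which is impossible because $\z_3$ has a pole there.
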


\begin{proof}
Let $n \ge 2$, $\z = \z_{n}$, $\sigma = \sigma_n$, $\delta= \delta_n$, and $s = a_{n + 1}$. We omit the argument in a function if it is evaluated at $\sigma$. Near the pole $s = 0$,
\begin{equation*}
\begin{split}
\z_{n + 1} &= \z - \zeta(s) + \zeta(\sigma + s) - \zeta(\sigma) \\
&= -\frac{1}{s} + \z - \wp\, s - \tfrac{1}{2} \wp' s^2 - \big( \tfrac{1}{6}\wp'' - \tfrac{1}{60}g_2 \big) s^3 + \cdots.
\end{split}
\end{equation*}
Since $\delta_{n + 1} = \tfrac{n}{n + 1} \delta + \tfrac{1}{n + 1} \p_s$, we have for $k \ge 1$
\begin{equation*}
\begin{split}
\delta_{n + 1}^{k} \z_{n + 1} &= \tfrac{n^k}{(n + 1)^k} (\delta^{k}\z + \wp^{(k - 1)}) + \tfrac{1}{(n + 1)^k}\wp^{(k - 1)}(s) - \wp^{(k - 1)}(\sigma + s) \\
&= \tfrac{(-1)^{k - 1} k !}{(n + 1)^k} \frac{1}{s^{k + 1}} + \Big(\tfrac{n^k}{(n + 1)^k} \delta^{k}\z + \big(\tfrac{n^k}{(n + 1)^k} - 1\big)\wp^{(k - 1)} + \tfrac{(k - 1)!}{(n + 1)^k} c_{k - 1}\Big) \\
&\qquad - \sum\nolimits_{j \ge 1} \Big(\tfrac{1}{j!} \wp^{(k + j - 1)} - \tfrac{(k + j - 1)!}{(n + 1)^k j!} c_{k + j - 1}\Big)s^j,
\end{split}
\end{equation*}
where $\wp(z) = z^{-2} + \sum_{j \ge 2} c_j z^j$, $c_2 = \tfrac{1}{20} g_2$, $c_4 = \tfrac{1}{28} g_3$, and $c_{2i + 1} = 0$ for all $i$. Notice that the principal part has only one term.

Now let $n = 3$, and we will prove the case for $n + 1 = 4$. We have
\begin{equation*}
\begin{split}
\z_4^2 &= \frac{1}{s^2} - \frac{2\z}{s} + (\z^2 + 2\wp) + (\wp' - 2\wp \z) s \\
&\qquad + (\wp^2 - \wp' \z + \tfrac{1}{3}\wp'' - \tfrac{1}{30}g_2) s^2 + O(s^3), \\
\delta_4\z_4 &= \tfrac{1}{4} \frac{1}{s^2} + (\tfrac{3}{4} \delta\z - \tfrac{1}{4}\wp) - \wp' s - (\tfrac{1}{2} \wp'' - \tfrac{1}{80} g_2) s^2 + O(s^3), \\
\delta_4^2 \z_4 &= -\tfrac{1}{8} \frac{1}{s^3} + (\tfrac{9}{16} \delta^2\z - \tfrac{7}{16} \wp') - (\wp'' - \tfrac{1}{160} g_2) s + O(s^2) \\
\delta_4^3 \z_4 &= \tfrac{3}{32} \frac{1}{s^4} + (\tfrac{27}{64} \delta^3\z - \tfrac{37}{64} \wp'' + \tfrac{1}{640} g_2) + O(s).
\end{split}
\end{equation*}
Hence (we still denote $\delta_4$ by $\delta$ for simplicity)
\begin{equation*}
\begin{split}
\z_4^2 \delta \z_4 &= \frac{1}{4 s^4} - \frac{\z}{2s^3} + (\tfrac{3}{4}\delta \z + \tfrac{1}{4}\z^2 + \tfrac{1}{4} \wp) \frac{1}{s^2} - (\tfrac{3}{4} \wp' + \tfrac{3}{2} \z\delta\z) \frac{1}{s} \\
&\qquad + \tfrac{3}{4} \z^2 \delta\z + \tfrac{3}{2} \wp\, \delta \z - \tfrac{1}{4} \wp \,\z^2 + \tfrac{7}{4} \wp' \z - (\tfrac{5}{12} \wp'' + \tfrac{1}{4} \wp^2 - \tfrac{1}{240} g_2) + O(s), \\
\z_4\delta^2\z_4 &= \frac{1}{8s^4} - \frac{\z}{8s^3} + \frac{\wp}{8s^2} + (\tfrac{1}{2}\wp' - \tfrac{9}{16} \delta^2\z) \frac{1}{s} \\
&\qquad + \tfrac{9}{16} \z\delta^2 \z - \tfrac{7}{16} \wp' \z + (\tfrac{49}{48} \wp'' - \tfrac{1}{120} g_2) + O(s), \\
(\delta\z_4)^2 &= \frac{1}{16s^4} + \frac{3 \delta\z - \wp}{8 s^2} - \frac{\wp'}{2 s} + (\tfrac{3}{4} \delta\z - \tfrac{1}{4}\wp)^2 - (\tfrac{1}{4}\wp'' - \tfrac{1}{160} g_2) + O(s).
\end{split}
\end{equation*}
We would like to represent
\begin{equation*}
\begin{split}
\z_4^4 &= \frac{1}{s^4} - \frac{4\z}{s^3} + \frac{6\z^2 + 4\wp}{s^2} - \frac{4\z^3 + 12\wp \z - 2\wp'}{s} \\
&\qquad + \z^4 + 12\wp \z^2 + 6(\wp^2 - \wp'\z) + (\tfrac{2}{3} \wp'' - \tfrac{1}{15} g_2) + O(s).
\end{split}
\end{equation*}
by $A_4(\delta^*\z)$, whose general form is  
$$
a \z_4^2 \delta\z_4 + b \z_4^2 \delta^2 \z_4 + c \delta^3 \z_4 + d (\delta\z_4)^2 + e\wp\, \z_4^2 + f\wp \delta \z_4 + g\wp' \z_4 + (h\wp'' + i g_2).
$$

The equations for $s^{-4}$, $s^{-3}$, $s^{-2}$ in $\z_4^4 = A_4(\delta^*\z)$ read as
\begin{equation*}
\begin{split}
1 &= \tfrac{1}{4}a + \tfrac{1}{8}b + \tfrac{3}{32}c + \tfrac{1}{16}d \quad \mbox{(for $1/s^4$)},\\
4 &= \tfrac{1}{2}a + \tfrac{1}{8}b \quad \mbox{(for $\z/s^3$)},\\
6 &= \tfrac{1}{4}a \quad \mbox{(for $\z^2/s^2$)},\\
0 &= \tfrac{3}{4}a + \tfrac{3}{8}d \quad \mbox{(for $\delta\z/s^2$)}, \\
4 &= \tfrac{1}{4}a + \tfrac{1}{8}b - \tfrac{1}{8}d + e + \tfrac{1}{4} f \quad \mbox{(for $\wp/s^2$)}.
\end{split}
\end{equation*}
The middle 3 equations give $a = 24$, $b = -64$, $d = -48$. The first equation then gives $c = 64$, an the last equation reduces to
$$
e + \tfrac{1}{4} f = 0.
$$
The equation for $s^{-1}$ then reads as $-4\z^3 - 12\wp \z + 2\wp' = (-18\wp' - 36 \z\delta \z) + (-32\wp' + 36 \delta^2 \z) + 24 \wp' - 2e\wp \z - g \wp'$. By collecting terms we get 
\begin{equation*}
\begin{split}
\z^3 = 9\z \delta \z - 9\delta^2 \z + (\tfrac{1}{2}e - 3)\wp \z + (7 + \tfrac{1}{4}g)\wp'.
\end{split}
\end{equation*}
Now we plug in the result for $\z^3$. By comparing with (\ref{zdz}) we get $e = 30$, $g = -60$. Hence $f = -120$.

The final equation for the constant term is given by
\begin{equation*}
\begin{split}
&\z^4 + 12\wp \z^2 + 6(\wp^2 - \wp'\z) + (\tfrac{2}{3} \wp'' - \tfrac{1}{15} g_2) \\
&= 24\big(\tfrac{3}{4} \z^2 \delta\z + \tfrac{3}{2} \wp\, \delta \z - \tfrac{1}{4} \wp \,\z^2 + \tfrac{7}{4} \wp' \z - (\tfrac{5}{12} \wp'' + \tfrac{1}{4} \wp^2 - \tfrac{1}{240} g_2)\big) \\
&\quad -64\big(\tfrac{9}{16} \z\delta^2 \z - \tfrac{7}{16} \wp' \z + (\tfrac{49}{48} \wp'' - \tfrac{1}{120} g_2)\big) \\
&\quad+ 64(\tfrac{27}{64} \delta^3\z - \tfrac{37}{64} \wp'' + \tfrac{1}{640} g_2) \\
&\quad -48\big( (\tfrac{3}{4} \delta\z - \tfrac{1}{4}\wp)^2 - (\tfrac{1}{4}\wp'' - \tfrac{1}{160} g_2) \big) \\
&\quad + 30\wp(\z^2 + 2\wp) -120\wp(\tfrac{3}{4}\delta \z - \tfrac{1}{4}\wp) - 60 \wp'\, \z + h\wp'' + i g_2.
\end{split}
\end{equation*}
By collecting terms we get
\begin{equation} \label{const}
\begin{split}
\z^4 &= 18 \z^2 \delta \z - 36 \z \delta^2 \z + 27 \delta^3\z - 27 (\delta\z)^2 \\
&\quad + 12\wp \z^2 - 36 \wp\delta\z + 16 \wp' \z + 75\wp^2 + (h - 101)\wp'' + (i +\tfrac{1}{2}) g_2.
\end{split}
\end{equation}

From (\ref{zdz}), by symmetric differentiation we compute
\begin{equation} \label{dz^3}
\begin{split}
3\z^2 \delta\z = 9(\delta\z)^2 + 9 \z\delta^2 \z - 9\delta^3\z + 12\wp' \z + 12 \wp \delta\z - 8\wp''.
\end{split}
\end{equation}
Multiplying (\ref{zdz}) by $\z$ and substituting $\z^2\delta\z$ by (\ref{dz^3}), we get 
\begin{equation} \label{z4-1}
\begin{split}
\z^4 &= 9\z^2 \delta\z - 9\z\delta^2\z + 12\wp\z^2 - 8\wp' \z \\
&= 18 \z\delta^2\z - 27 \delta^3\z + 27(\delta\z)^2 + 12 \wp\z^2 + 36\wp\delta\z + 28 \wp'\z - 24\wp''.
\end{split}
\end{equation}
On the other hand, by substituting $\z^2\delta\z$ in (\ref{const}) by (\ref{dz^3}), we get
\begin{equation} \label{z4-2}
\begin{split}
\z^4 &= 18 \z\delta^2 \z - 27\delta^3\z + 27(\delta\z)^2 + 12 \wp\z^2 + 36\wp\delta\z + 88\wp'\z \\
&\qquad + 75\wp^2 + (h - 149)\wp'' + (i + \tfrac{1}{2}) g_2.
\end{split}
\end{equation}
Comparing (\ref{z4-1}) and (\ref{z4-2}) we must have
$$
60 \wp'(\sigma)\z = - 99\wp^2(\sigma) - (h - 149)\wp''(\sigma) - (i + \tfrac{1}{2}) g_2.
$$
The function $\z = \zeta(a_1 + a_2 + a_3) - \zeta(a_1) - \zeta(a_2) - \zeta(a_3)$, viewed as a function in $t = a_3$, has a pole at $t = 0$. But the right hand side is clearly regular at $t = 0$. Hence a contradiction. 

This shows that for $n = 4$ no monic admissible polynomial equations of degree $n$ for $\z_n$ may exist. The proof shows also that if $A_n(\delta^*\z)$ exists for some $n \in \Bbb N$, then $A_{n - 1}(\delta^*\z)$ must also exist by looking at the residue term in the Laurent expansion of $\z_n^n = A_n(\delta^*\z_n)$. By induction this implies $A_n(\delta^*\z)$ does not exists for all $n \ge 4$.
\end{proof}

Thus in order to make Theorem \ref{t:equiv} effective we have to construct the degree $\tfrac{1}{2}n(n + 1)$ polynomial $W_n(z)$ directly without the intermediate step. This issue is now resolved in \cite{LW-II} based on the method of resultant. \medskip

We conclude the appendix by 

\begin{remark} \label{galois}
The branched cover $\overline X_n \to E$ is in general not a Galois cover. Namely, $W_n(z)$ does not split into product of linear factors in $K(\bar X_n)$. 

Indeed, for $n = 2$ we may factorize $W_2(z)$ by division to get
$$
W_2(z) = (z - \z_2)(z^2 + \z_2 z + (\z_2^2 - 3\wp(\sigma))).
$$ 
Thus the other two roots of $W_2(z)$ are given by
$$
\w_{\pm} := \frac{1}{2}\Big(-\z_2 \pm \sqrt{3(4 \wp(\sigma) - \z_2^2)}\Big).
$$
We will show that $\w_{\pm}$ are not single valued on $\overline X_2$ for general tori.

The rational function $h(a) := 4\wp(\sigma(a)) - \z_2^2(a)$ has poles of total order six by \cite[Theorem 3.13]{LW-II}. By \cite[Example 3.11]{LW-II}, if $g_2 \ne 0$, they consists of 3 poles with each of order 2, and for $g_2 = 0$, $0^2 \in \overline X_2$ is of order 2 and $(q, -q) \in \overline X_2$ with $\wp(\pm q) = 0$ is of order 4. In order for $\w_{\pm}$ being single valued, the zeros of $h$ must also be of even order.

If $h(a) = 0$ but $\wp(\sigma(a)) \ne 0$ for some $a \in \overline X_2$, then it is easy to see that there must be some zeros of $h$ with odd order. Thus we only need to consider the case that all zeros of $h$ are also zeros of $\wp(\sigma(a))$. Since
$$
h(a) = 3\wp(a_1 + a_2) - (\wp(a_1) + \wp(a_2)),
$$
we have $\wp(a_1) + \wp(a_2) = 0$ too. The addition theorem then implies that $\wp'(a_1) = \wp'(a_2)$. But the equation for $X_2$ is given by $\wp'(a_1) + \wp'(a_2) = 0$, hence $\wp'(a_1) = 0 = \wp'(a_2)$. That is, $a_1 = \tfrac{1}{2} \omega_i$, $a_2 = \tfrac{1}{2} \omega_j$ and $a_1 + a_2 = \tfrac{1}{2} \omega_k$ is the third half period. Then we get the non-trivial equation $e_k = 0$. Thus for general $E = E_\tau$, $W_2(z)$ does not split into product of linear factors. 
\end{remark}

\end{document}